\newcommand{\R}{\mathds R}
\newcommand{\cupdot}{\mathbin{\hspace{2pt}\cdot\hspace{-5pt}\cup}}
\title{Small Valdivia compacta and trees}
\author{Claudia Correa}
\thanks{The first author is sponsored by FAPESP (Process no.\ 2014/00848-2).}
\address{Departamento de Matem\'atica,\hfill\break\indent Universidade de S\~ao Paulo, Brazil}
\email{claudiac.mat@gmail.com}
\author{Daniel V. Tausk}
\address{Departamento de Matem\'atica,\hfill\break\indent Universidade de S\~ao Paulo, Brazil}
\email{tausk@ime.usp.br} \urladdr{http://www.ime.usp.br/\~{}tausk}
\subjclass[2010]{54D30, 54G20, 06A06, 54A35}
\keywords{Valdivia compacta; inverse limits; trees}
\date{July 23rd, 2016}
\begin{document}

\theoremstyle{plain}\newtheorem{teo}{Theorem}[section]
\theoremstyle{plain}\newtheorem{prop}[teo]{Proposition}
\theoremstyle{plain}\newtheorem{lem}[teo]{Lemma}
\theoremstyle{plain}\newtheorem{cor}[teo]{Corollary}
\theoremstyle{definition}\newtheorem{defin}[teo]{Definition}
\theoremstyle{remark}\newtheorem{rem}[teo]{Remark}
\theoremstyle{plain} \newtheorem{assum}[teo]{Assumption}
\theoremstyle{definition}\newtheorem{example}[teo]{Example}
\theoremstyle{plain}\newtheorem*{conjecture}{Conjecture}

\begin{abstract}
We present a characterization of Valdivia compact spaces of small weight in terms of path spaces of trees and we use it to obtain
(under $\diamondsuit$) a counterexample to a conjecture related to an open problem concerning twisted sums of $C(K)$ spaces.
\end{abstract}

\maketitle

\begin{section}{Introduction}
The purpose of this article is twofold: first, we use the description given by Kubi\'s and Michalewski in \cite{KubisSmall} of the class of small Valdivia compacta involving inverse limits of compact metric spaces to obtain a new characterization of this class in terms of path spaces of certain types of trees, endowed with appropriate compatible topologies. This characterization allows one to fine-tune the structure of a Valdivia compactum by manipulating the properties of the corresponding tree. We then use this technique to construct, under $\diamondsuit$, a counterexample to the following
conjecture stated in \cite[Section~4]{ExtCKc0}.
\begin{conjecture}
If $K$ is a nonempty Valdivia compact space satisfying the countable chain condition (ccc), then either $K$ has a $G_\delta$ point or $K$ admits a nontrivial convergent sequence in the complement of a dense $\Sigma$-subset.
\end{conjecture}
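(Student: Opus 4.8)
Since the Conjecture is to be refuted, the goal is to construct, assuming $\diamondsuit$, a nonempty ccc Valdivia compactum $K$ of small weight that has no $G_\delta$ point and no nontrivial convergent sequence lying in the complement of any dense $\Sigma$-subset. The plan is to realize $K$ as the path space of a tree $T$ of height $\omega_1$ supplied by the characterization, and to build $T$ by a transfinite recursion of length $\omega_1$ steered by $\diamondsuit$. The guiding picture is that a small Valdivia compactum sits inside a Cantor cube $\{0,1\}^{\omega_1}$ as the closure of its intersection with the $\Sigma$-product of countable-support points; the full cube already witnesses the Valdivia property and the absence of $G_\delta$ points, but fails the last requirement, so the entire difficulty is to \emph{prune} the cube --- equivalently, to pass to a subtree --- so as to remove every convergent sequence living off the $\Sigma$-subset, without destroying ccc and without creating points of countable character.

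First I would translate the three nontrivial requirements into conditions on $T$. Nonemptiness and the small Valdivia property are immediate from the characterization. The ccc of $K$ corresponds to a Suslin-type condition on $T$, namely the absence of uncountable antichains of the relevant sort. The absence of $G_\delta$ points is, by the standard fact that in a compact Hausdorff space the $G_\delta$ points are exactly the points of countable character, equivalent to the requirement that every path through $T$ be pinned down only by uncountably many of its nodes. Finally, a nontrivial convergent sequence contained in the complement of a dense $\Sigma$-subset corresponds to a sequence of paths that eventually escapes every prescribed countable subtree yet still converges in the path-space topology; the last requirement says no such sequence exists.

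Then I would run the recursion, constructing $T$ as a continuous increasing union of countable subtrees $T_\alpha$, $\alpha < \omega_1$. At stage $\alpha$ the diamond sequence provides a guess $A_\alpha \subseteq T_\alpha$, to be read as a candidate for the trace of a convergent sequence together with the countable $\Sigma$-subset it is meant to avoid. Whenever this guess genuinely codes such a configuration, I would extend the tree above level $\alpha$ by inserting branching that separates the purported limit from the tail of the sequence, so that the sequence can no longer converge to a point outside the $\Sigma$-subset, choosing the new nodes so as neither to introduce an uncountable antichain nor to give any path countable character. The diamond principle guarantees that every genuine convergent sequence and every dense $\Sigma$-subset of the final $K$ is correctly anticipated at a closed unbounded set of stages, so that all forbidden configurations are destroyed and the last requirement holds.

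The main obstacle is the simultaneous maintenance of the three properties, which genuinely compete: the extra branching needed to kill convergent sequences off $\Sigma$ is exactly the kind of branching that tends to spawn large antichains, violating ccc, and to isolate points, creating $G_\delta$ points of countable character. I would control this by carrying, alongside the diamond bookkeeping, an explicit inductive hypothesis that bounds the antichains and the local character inside each $T_\alpha$, and by checking that each successor amalgamation respects these bounds. The limit stages demand the most care: one must verify that taking unions of the countable approximations produces neither an uncountable antichain nor a path of countable character, and that the $\Sigma$-subset structure of the limit tree is the intended one. This verification, together with the proof that the $\diamondsuit$-guessing really catches every threat, is the technical heart of the construction.
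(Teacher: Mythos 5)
Your overall frame (path space of an $\omega_1$-tree, $\diamondsuit$-driven recursion of length $\omega_1$ through countable approximations) is the paper's frame, but the roles you assign to the ingredients are inverted, and this is a genuine gap rather than a stylistic difference. You propose to spend $\diamondsuit$ on guessing and destroying convergent sequences, while maintaining ccc by ``an explicit inductive hypothesis that bounds the antichains'' in each $T_\alpha$. The second half cannot work: an uncountable antichain in the final tree has countable trace at every stage, so no bound verified inside the countable approximations can exclude it; ccc is exactly the property that requires the $\diamondsuit$-guessing, as in the classical Suslin tree construction, where one guesses \emph{maximal antichains} and seals them (every node added later is placed above a member of the guessed antichain; in the paper this is condition~(3) of admissibility, Corollary~\ref{thm:temccc}, with the notion of \emph{special} antichain needed to make sealing compatible with the other demands). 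Conversely, convergent sequences off a dense $\Sigma$-subset are not objects $\diamondsuit$ can guess: whether a sequence of paths converges depends on the topology of the completed space, and a dense $\Sigma$-subset of $K$ is not a subset of $T$ that a $\diamondsuit$-sequence on $T$ could anticipate --- the paper first reduces to the canonical $\Sigma$-subset of countable paths via the independence observation (Remark~\ref{thm:indepSigma}), and then kills such sequences \emph{structurally}, not by guessing: each level is a copy of $[0,\omega]$ whose unique nonisolated point lies outside the subtree $Z$ of isolated points, Lemma~\ref{thm:uncountablepath} shows any nontrivial convergent sequence outside the canonical $\Sigma$-subset forces an uncountable path inside $Z$, and $Z$ is ever-branching with ccc, hence has only countable paths. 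Thus one application of $\diamondsuit$ (making $Z$ Suslin) yields both ccc and condition~(d) simultaneously; your scheme, as described, secures neither.

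The second gap is the topology. Your proposal treats ``the path-space topology'' as given, but the product topology of $2^T$ is unavailable here: as the introduction notes, with that topology a nonempty path space cannot have both ccc and no $G_\delta$ points. One must therefore construct a nonstandard compatible topology, and this is the technical heart of the paper: locally compact second countable topologies are placed on the levels $\delta^{-1}(\alpha)\cong[0,\omega]$ and glued via continuity of the connecting maps $g_{\beta\alpha}$ (Proposition~\ref{thm:maquininha}), and preserving this continuity through each successor step is what Lemmas~\ref{thm:zn}, \ref{thm:B} and \ref{thm:estendeadmissible} accomplish, in delicate interaction with antichain-sealing and with the requirement that $N_A\setminus Z$ be uncountable for every countable path (arranged by the bookkeeping function $\psi$ in Corollary~\ref{thm:psi}, which is what defeats $G_\delta$ points via Lemma~\ref{thm:PTGdeltapoint} --- your reformulation ``every path is pinned down only by uncountably many of its nodes'' is not the right tree-theoretic translation; the correct one concerns uncountable branching, spread over uncountably many levels, above every \emph{countable} path). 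Without specifying the topology and the continuity constraints, your key move --- ``inserting branching that separates the purported limit from the tail'' --- is not an operation on any defined object, and the claim that $\diamondsuit$ catches every threat on a club (it gives a stationary set, not a club) has nothing to act on.
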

Recall that a compact Hausdorff space $K$ is called a {\em Valdivia compactum\/} if it admits a dense {\em $\Sigma$-subset}, i.e., a subset of $K$ of the form
$\varphi^{-1}[\Sigma(\Gamma)]$, where $\varphi:K\to\R^\Gamma$ is a continuous injection and $\Sigma(\Gamma)$ denotes the set of points $x\in\R^\Gamma$ such that $\big\{\gamma\in\Gamma:x_\gamma\ne0\big\}$ is countable; here $\Gamma$ is an arbitrary index set and $\R^\Gamma$ is endowed with the product topology.
We call a Valdivia compactum {\em small\/} if its weight is not greater than $\omega_1$.
Valdivia compact spaces constitute a large superclass of Corson compact spaces, closed under arbitrary products, and they were introduced by Argyros, Mercourakis, and Negrepontis in \cite{ArgyrosCorson}. This class and its relation to the theory of Banach spaces have since then been studied by several authors \cite{ArgyrosLondon, Kalenda3, Kalenda2, KK2, KubisSmall, Valdivia, Valdivia2} (see \cite{Kalenda} for a survey).

We observe that finding examples of nonempty Valdivia compact spaces with no $G_\delta$ points and no nontrivial convergent sequences in the complement of a dense $\Sigma$-subset is not a trivial task, since the absence of $G_\delta$ points tends to make the complement of dense $\Sigma$-subsets ``large''
(see, for instance, \cite[Theorem~3.3]{Kalenda} for a more precise statement). In \cite[Proposition~4.7]{ExtCKc0}, it was shown that
the path space of a certain tree $T$, endowed with the product topology of $2^T$, provides such an example. However, using this topology it is not possible to have a nonempty path space with no $G_\delta$ points and ccc. The techniques presented in this article allow us to handle more complicated topologies on path spaces and after a technically elaborate construction a counterexample to the Conjecture is obtained under $\diamondsuit$. Recall that $\diamondsuit$ is a combinatorial
principle stronger than the {\it continuum hypothesis\/} (CH) and consistent with ZFC (see \cite{Kunen}).

\smallskip

As shown in \cite{ExtCKc0}, the Conjecture implies, under CH, that for every nonmetrizable Valdivia
compact space $K$ there exists a {\em nontrivial twisted sum\/} of $c_0$ and $C(K)$, i.e., a Banach space $X$ containing a noncomplemented isomorphic copy
$Y$ of $c_0$ such that $X/Y$ is isomorphic to $C(K)$. As usual, $C(K)$ denotes the Banach space of continuous real-valued functions on $K$ endowed with
the supremum norm. The existence of a nontrivial twisted sum of $c_0$ and $C(K)$, for every
nonmetrizable compact Hausdorff space $K$, is an open problem discussed in many articles \cite{CastilloKalton,JesusSobczyk,Castilloscattered}.
This problem remains open even in the context of Valdivia compact spaces (\cite{Castilloscattered}) and was only recently settled, under CH, for Corson compacta (\cite[Theorem~3.1]{ExtCKc0}). In fact, the existence of a nontrivial twisted sum of $c_0$ and $C(K)$ for every nonmetrizable Corson compact space $K$ is known to hold also under Martin's Axiom (\cite[Remark~3.5]{ExtCKc0}), but it is not known whether this can be proven in ZFC.

Even though the Conjecture turned out to be false (under $\diamondsuit$), a deeper understanding of the class
of counterexamples to the Conjecture should shed light upon the problem of existence of nontrivial twisted sums of $c_0$ and $C(K)$, for $K$ a nonmetrizable Valdivia compact space. The authors do not know whether such a nontrivial twisted sum exists if $K$ is the counterexample to the conjecture constructed in this article. We note, in addition, that to prove the existence of a nontrivial twisted sum of $c_0$ and $C(K)$ for an arbitrary nonmetrizable Valdivia compact space $K$, one can restrict attention to the case when $K$ is small, since every nonmetrizable Valdivia compact space contains a nonmetrizable small Valdivia subspace as a retract (\cite[Lemma~1.3]{ArgyrosCorson}).

\smallskip

Here is an overview of the contents of this article. In Section~\ref{sec:inverseValdivia}, we review the relevant material concerning inverse limits
and we recall the characterization of small Valdivia compacta from \cite{KubisSmall}. In Section~\ref{sec:inversetree}, we develop the theory relating
inverse limits to trees endowed with some additional structure and we study the relevant topologies on the path space.
Finally, Section~\ref{sec:counterexample} is devoted to the construction of the counterexample to the Conjecture.

\newpage

\end{section}

\begin{section}{Inverse limits and Valdivia compacta}\label{sec:inverseValdivia}

Throughout the article, we denote by $\vert\mathcal X\vert$ the cardinality of a set $\mathcal X$, by $w(\mathcal X)$ the weight of a topological
space $\mathcal X$ and by $\mathbb S$ the class of all successor ordinals. We start by recalling some standard definitions and facts concerning inverse limits.
\begin{defin}\label{thm:definversesystem}
Let $(I,{\le})$ be a partially ordered directed set. An {\em inverse system of sets\/} $\mathcal K=\big((K_i)_{i\in I},(r_{ij})_{i\le j\in I}\big)$ indexed on $I$ consists of a family $(K_i)_{i\in I}$ of sets and a family $(r_{ij}:K_j\to K_i)_{i\le j\in I}$ of maps such that
$r_{ii}$ is the identity of $K_i$, for all $i\in I$, and $r_{ij}\circ r_{jk}=r_{ik}$, for all
$i,j,k\in I$ with $i\le j\le k$. We call $\big(K,(r_i)_{i\in I}\big)$ a {\em cone\/} over $\mathcal K$ if $K$ is a set and $(r_i:K\to K_i)_{i\in I}$ is a family of maps with $r_{ij}\circ r_j=r_i$, for all $i,j\in I$ with $i\le j$. An {\em inverse limit\/} of $\mathcal K$ is a cone
$\big(K,(r_i)_{i\in I}\big)$ over $\mathcal K$ such that, for every cone $\big(K',(r'_i)_{i\in I}\big)$ over $\mathcal K$,
there exists a unique map $f:K'\to K$ such that $r_i\circ f=r'_i$, for all $i\in I$.
\end{defin}
A concrete description of an inverse limit of $\mathcal K$ is obtained by considering the set:
\begin{equation}\label{eq:canliminv}
\Big\{(x_i)_{i\in I}\in\prod_{i\in I}K_i:\text{$r_{ij}(x_j)=x_i$, for all $i,j\in I$ with $i\le j$}\Big\},
\end{equation}
together with the restrictions of the projections. Note that $\big(K,(r_i)_{i\in I}\big)$ is a cone over $\mathcal K$ if and only if the image of the map $(r_i)_{i\in I}:K\to\prod_{i\in I}K_i$ is contained in \eqref{eq:canliminv} and that $\big(K,(r_i)_{i\in I}\big)$ is an inverse limit of $\mathcal K$ if and only if the map $(r_i)_{i\in I}$ is a bijection between $K$ and \eqref{eq:canliminv}.
When the sets $K_i$ are endowed with compact Hausdorff topologies such that the maps $r_{ij}$ are continuous, we call $\mathcal K$ an {\em inverse system of
compact Hausdorff spaces}. Cones and inverse limits are then defined by replacing ``set'' with ``compact Hausdorff space'' and ``map'' with ``continuous map'' in Definition~\ref{thm:definversesystem}. In this context, the set \eqref{eq:canliminv} should be endowed with the product topology.

\smallskip

The following simple results give information on the closed $G_\delta$ subsets of an inverse limit.
\begin{lem}\label{thm:lemasaturado}
Let $\mathcal K=\big((K_i)_{i\in I},(r_{ij})_{i\le j\in I}\big)$ be an inverse system of compact Hausdorff spaces and $\big(K,(r_i)_{i\in I}\big)$ be an
inverse limit of $\mathcal K$. Given a closed subset $F$ of $K$ and an open subset $U$ of $K$ containing $F$, there exists $i\in I$
with $r_i^{-1}\big[r_i[F]\big]\subset U$. In particular, if every countable subset of $I$ is bounded and $F$ is a closed $G_\delta$ subset of $K$,
then there exists $i\in I$ with $F=r_i^{-1}\big[r_i[F]\big]$.
\end{lem}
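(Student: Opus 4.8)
The plan is to work with the concrete description of the inverse limit, under which the family $(r_i)_{i\in I}$ is a topological embedding of $K$ into $\prod_{i\in I}K_i$; in particular the maps $r_i$ jointly separate the points of $K$, and each set $r_i^{-1}[V]$, with $V$ open in $K_i$, is open in $K$ (these sets in fact form a base, using directedness of $I$ and the relations $r_i=r_{ij}\circ r_j$). The first observation is that the desired conclusion $r_i^{-1}\big[r_i[F]\big]\subset U$ is equivalent to the disjointness $r_i[F]\cap r_i[K\setminus U]=\emptyset$: indeed, a point of $K\setminus U$ lies in $r_i^{-1}\big[r_i[F]\big]$ precisely when its image under $r_i$ belongs to $r_i[F]$. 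So it suffices to produce a single index $i$ for which $r_i$ sends $F$ and $K\setminus U$ to disjoint sets.

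To find such an $i$ I would run a compactness argument on the product $(K\setminus U)\times F$, which is compact since both factors are closed subsets of the compact space $K$. For each pair $(x,y)$ in this product we have $x\ne y$, so by the separating property there is an index with $r_i(x)\ne r_i(y)$; using that $K_i$ is Hausdorff, choose disjoint open sets $V,W\subset K_i$ around $r_i(x)$ and $r_i(y)$, giving an open box $r_i^{-1}[V]\times r_i^{-1}[W]$ around $(x,y)$. These boxes cover $(K\setminus U)\times F$; extract a finite subcover with indices $i_1,\dots,i_n$ and, invoking directedness, pick $i\in I$ above all of them. The relation $r_{i_k}=r_{i_ki}\circ r_i$ then shows that $r_i(x)=r_i(y)$ would force $r_{i_k}(x)=r_{i_k}(y)$ for the box containing $(x,y)$, contradicting the disjointness of the corresponding $V,W$; hence $r_i[F]\cap r_i[K\setminus U]=\emptyset$, as required.

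For the ``in particular'' statement, write the closed $G_\delta$ set as $F=\bigcap_{n\in\omega}U_n$ with each $U_n$ open and containing $F$. Applying the first part to each pair $(F,U_n)$ yields indices $i_n$ with $r_{i_n}^{-1}\big[r_{i_n}[F]\big]\subset U_n$; since $\{i_n:n\in\omega\}$ is countable, the hypothesis that every countable subset of $I$ is bounded provides an $i\in I$ with $i\ge i_n$ for all $n$. The key monotonicity observation is that $j\le i$ implies $r_i^{-1}\big[r_i[F]\big]\subset r_j^{-1}\big[r_j[F]\big]$: if $r_i(x)=r_i(y)$ with $y\in F$, then applying $r_{ji}$ gives $r_j(x)=r_j(y)\in r_j[F]$. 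Thus $r_i^{-1}\big[r_i[F]\big]\subset r_{i_n}^{-1}\big[r_{i_n}[F]\big]\subset U_n$ for every $n$, so $r_i^{-1}\big[r_i[F]\big]\subset\bigcap_nU_n=F$; since the reverse inclusion $F\subset r_i^{-1}\big[r_i[F]\big]$ is automatic, equality holds.

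I expect the main obstacle to be the first, compactness-based part: one must ensure that the separation achieved pointwise on $(K\setminus U)\times F$ can be upgraded to a \emph{single} index valid everywhere, which is exactly what the finite subcover together with the directedness of $I$ accomplishes. The second part is then a routine combination of the first part with the boundedness hypothesis and the elementary monotonicity of the saturations $r_i^{-1}\big[r_i[F]\big]$.
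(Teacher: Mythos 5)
Your proof is correct and is essentially the paper's argument in dual form: the paper applies the finite intersection property to the downward directed family of closed sets $\big\{(x,y)\in K\times K:r_i(x)=r_i(y)\big\}$, whose intersection is the diagonal and hence misses the compact set $F\times(K\setminus U)$, while you run the contrapositive open-cover version of the same compactness argument, separating each pair by disjoint boxes $r_i^{-1}[V]\times r_i^{-1}[W]$ and using directedness of $I$ to merge a finite subcover into a single index. Your treatment of the ``in particular'' clause (choosing the $i_n$, bounding them, and using the monotonicity of the saturations $r_i^{-1}\big[r_i[F]\big]$) is the routine completion that the paper leaves implicit, and it is correct.
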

\begin{proof}
Note that $\big\{(x,y)\in K\times K:r_i(x)=r_i(y)\big\}$, $i\in I$, is a downward directed family of closed subsets of the compact space $K\times K$
whose intersection is the diagonal. This intersection is contained in the complement of the closed set $F\times(K\setminus U)$ and therefore
$\big\{(x,y)\in K\times K:r_i(x)=r_i(y)\big\}$ is contained in the complement of $F\times(K\setminus U)$, for some $i\in I$.
\end{proof}

\begin{cor}\label{thm:KGdeltapoint}
Let $\mathcal K=\big((K_i)_{i\in I},(r_{ij})_{i\le j\in I}\big)$ be an inverse system of compact metric spaces and assume that every countable subset of $I$ is bounded. If $\big(K,(r_i)_{i\in I}\big)$ is an inverse limit of $\mathcal K$, then the set:
\[\big\{r_i^{-1}[F]:\text{$i\in I$, $F$ closed in $K_i$}\big\}\]
coincides with the collection of closed $G_\delta$ subsets of $K$. In particular, $K$ has a $G_\delta$ point if and only if there exist $i\in I$ and
$x\in K_i$ with $\vert r_i^{-1}(x)\vert=1$.\qed
\end{cor}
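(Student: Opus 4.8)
The plan is to establish the asserted equality of families of subsets by proving two inclusions, and then to deduce the criterion for $G_\delta$ points as a direct consequence.

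First I would show that every set of the form $r_i^{-1}[F]$, with $F$ closed in $K_i$, is a closed $G_\delta$ subset of $K$. Closedness is immediate from continuity of $r_i$. For the $G_\delta$ property I would use that $K_i$ is compact metric: in a metric space every closed set is a $G_\delta$, so $F=\bigcap_{n}U_n$ for suitable open sets $U_n\subset K_i$, whence $r_i^{-1}[F]=\bigcap_n r_i^{-1}[U_n]$ is a countable intersection of open subsets of $K$. This gives the inclusion of the displayed family into the collection of closed $G_\delta$ sets.

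Conversely, given a closed $G_\delta$ subset $F$ of $K$, I would invoke the second part of Lemma~\ref{thm:lemasaturado}: since every countable subset of $I$ is bounded, there exists $i\in I$ with $F=r_i^{-1}\big[r_i[F]\big]$. It then suffices to observe that $r_i[F]$ is closed in $K_i$, which follows because $F$ is a closed subset of the compact space $K$, hence compact, so its continuous image $r_i[F]$ is compact and therefore closed in the Hausdorff space $K_i$. This exhibits $F$ as $r_i^{-1}[G]$ with $G=r_i[F]$ closed, yielding the reverse inclusion and the claimed equality.

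Finally, for the statement on $G_\delta$ points, I would use that a point $p\in K$ is a $G_\delta$ point precisely when the (automatically closed) singleton $\{p\}$ is a closed $G_\delta$ set. By the equality just proved, this happens if and only if $\{p\}=r_i^{-1}[F]$ for some $i\in I$ and some closed $F\subset K_i$. If such a representation exists, setting $x=r_i(p)$ gives $r_i^{-1}(x)\subset r_i^{-1}[F]=\{p\}$, and since $p\in r_i^{-1}(x)$ we obtain $\vert r_i^{-1}(x)\vert=1$. Conversely, if $\vert r_i^{-1}(x)\vert=1$ with $r_i^{-1}(x)=\{p\}$, then $\{p\}=r_i^{-1}[\{x\}]$ with the singleton $\{x\}$ closed in $K_i$, so $p$ is a $G_\delta$ point. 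I expect the argument to be essentially routine, with all the real work carried by Lemma~\ref{thm:lemasaturado}; the only points requiring a moment's care are the metric fact that closed sets are $G_\delta$ and the compactness argument showing $r_i[F]$ is closed.
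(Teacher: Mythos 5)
Your proof is correct and follows exactly the intended route: the paper omits the proof (marking the corollary with a \qed) precisely because it is the routine verification you give, with all the substance carried by the second part of Lemma~\ref{thm:lemasaturado} plus the standard facts that closed sets in metric spaces are $G_\delta$ and that continuous images of compacta are closed. Both inclusions and the singleton argument for $G_\delta$ points are handled correctly, so nothing is missing.
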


\begin{defin}
Let $\mathcal K=\big((K_i)_{i\in I},(r_{ij})_{i\le j\in I}\big)$ be an inverse system of sets (resp., of compact Hausdorff spaces).
A {\em right inverse\/} of $\mathcal K$ is a family of maps (resp., of continuous maps) $(\sigma_{ij}:K_i\to K_j)_{i\le j\in I}$ such that $\sigma_{ij}$ is a right inverse of $r_{ij}$ and $\sigma_{jk}\circ\sigma_{ij}=\sigma_{ik}$, for all $i,j,k\in I$ with $i\le j\le k$.
\end{defin}
If $(\sigma_{ij})_{i\le j\in I}$ is a right inverse of $\mathcal K$ and $\big(K,(r_i)_{i\in I}\big)$ is an inverse limit of $\mathcal K$, then there exists a unique family of maps $(\sigma_i:K_i\to K)_{i\in I}$ such that $\sigma_i$ is a right inverse of $r_i$ and $\sigma_j\circ\sigma_{ij}=\sigma_i$ for all $i,j\in I$ with $i\le j$; these maps are automatically continuous if $\mathcal K$ is an inverse system of compact Hausdorff spaces (\cite[Lemma~3.1]{KubisSmall}). We call the maps $\sigma_i$ {\em induced\/} by the right inverse $(\sigma_{ij})_{i\le j\in I}$ and they will always be denoted by $\sigma_i$, whenever $(\sigma_{ij})_{i\le j\in I}$ denotes a right inverse of an inverse system. Note that, for $i,j\in I$, we have $r_j\circ\sigma_i=\sigma_{ij}$, if $i\le j$, and $r_j\circ\sigma_i=r_{ji}$, if $j\le i$.


\begin{defin}
An inverse system $\mathcal K=\big((K_\alpha)_{\alpha\in\theta},(r_{\alpha\beta})_{\alpha\le\beta\in\theta}\big)$ whose index set $\theta$ is
a nonzero ordinal (endowed with the natural order) is called an {\em inverse $\theta$-sequence\/} or, more simply, an {\em inverse sequence}. This inverse sequence is called {\em continuous\/} if,
for every limit ordinal $\alpha\in\theta$, we have that $\big(K_\alpha,(r_{\beta\alpha})_{\beta\in\alpha}\big)$ is an inverse limit of
$\big((K_\beta)_{\beta\in\alpha},(r_{\beta\gamma})_{\beta\le\gamma\in\alpha}\big)$.
\end{defin}

The next lemma gives a useful description of the image of the maps $\sigma_\alpha$ induced by a right inverse of a continuous inverse sequence.
\begin{lem}\label{thm:imagesigmaalpha}
If $\mathcal K=\big((K_\alpha)_{\alpha\in\theta},(r_{\alpha\beta})_{\alpha\le\beta\in\theta}\big)$ is a continuous inverse sequence with
a right inverse $(\sigma_{\alpha\beta})_{\alpha\le\beta\in\theta}$ and $\big(K,(r_\alpha)_{\alpha\in\theta}\big)$ is an inverse limit of $\mathcal K$, then for all $\alpha\in\theta$, the image of $\sigma_\alpha$ is equal to:
\begin{equation}\label{eq:imsigmaalpha}
\big\{x\in K:\text{$r_{\beta+1}(x)\in\sigma_{\beta,\beta+1}[K_\beta]$, for all $\beta\ge\alpha$ with $\beta+1<\theta$}\big\}.
\end{equation}
\end{lem}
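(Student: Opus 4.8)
The plan is to establish both inclusions between the image of $\sigma_\alpha$ and the set \eqref{eq:imsigmaalpha}, which I denote $A_\alpha$. The only tools needed are the identities recorded just before the statement: the composition law $\sigma_{\beta,\beta+1}\circ\sigma_{\alpha\beta}=\sigma_{\alpha,\beta+1}$ for the right inverse, the section property $r_{ij}\circ\sigma_{ij}=\mathrm{id}$, the relations $r_j\circ\sigma_i=\sigma_{ij}$ (for $i\le j$) and $r_j\circ\sigma_i=r_{ji}$ (for $j\le i$), and the cone relation $r_{ij}\circ r_j=r_i$. For the inclusion $\mathrm{Im}(\sigma_\alpha)\subset A_\alpha$, I would take $x=\sigma_\alpha(y)$ with $y\in K_\alpha$ and, for $\beta\ge\alpha$ with $\beta+1<\theta$, compute $r_{\beta+1}(x)=\sigma_{\alpha,\beta+1}(y)=\sigma_{\beta,\beta+1}\big(\sigma_{\alpha\beta}(y)\big)\in\sigma_{\beta,\beta+1}[K_\beta]$; this is immediate.

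The substance is the reverse inclusion $A_\alpha\subset\mathrm{Im}(\sigma_\alpha)$. Given $x\in A_\alpha$, the candidate preimage is $y=r_\alpha(x)$, and I would show $\sigma_\alpha\big(r_\alpha(x)\big)=x$. Since $\big(K,(r_\gamma)_{\gamma\in\theta}\big)$ is an inverse limit, the family $(r_\gamma)_{\gamma\in\theta}$ is injective on $K$ (it is a bijection onto the canonical set \eqref{eq:canliminv}), so it suffices to verify $r_\gamma\big(\sigma_\alpha(r_\alpha(x))\big)=r_\gamma(x)$ for every $\gamma\in\theta$. For $\gamma\le\alpha$ this is automatic: $r_\gamma\circ\sigma_\alpha=r_{\gamma\alpha}$ and $r_{\gamma\alpha}\circ r_\alpha=r_\gamma$. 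Thus the whole problem reduces to the claim that $\sigma_{\alpha\gamma}\big(r_\alpha(x)\big)=r_\gamma(x)$ for all $\gamma\in[\alpha,\theta)$, which I would prove by transfinite induction on $\gamma$.

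In that induction, the base case $\gamma=\alpha$ is trivial since $\sigma_{\alpha\alpha}=\mathrm{id}$. For the successor step I would write $\sigma_{\alpha,\gamma+1}=\sigma_{\gamma,\gamma+1}\circ\sigma_{\alpha\gamma}$ and use the inductive hypothesis to reduce the goal to $\sigma_{\gamma,\gamma+1}\big(r_\gamma(x)\big)=r_{\gamma+1}(x)$. This is exactly where the defining condition of $A_\alpha$ is consumed: it guarantees $r_{\gamma+1}(x)=\sigma_{\gamma,\gamma+1}(z)$ for some $z\in K_\gamma$, and applying $r_{\gamma,\gamma+1}$ together with the cone relation $r_{\gamma,\gamma+1}\circ r_{\gamma+1}=r_\gamma$ and the section property $r_{\gamma,\gamma+1}\circ\sigma_{\gamma,\gamma+1}=\mathrm{id}$ forces $z=r_\gamma(x)$, giving the desired equality. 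For a limit $\gamma>\alpha$ I would invoke continuity of the inverse sequence: $\big(K_\gamma,(r_{\beta\gamma})_{\beta<\gamma}\big)$ is an inverse limit, so $(r_{\beta\gamma})_{\beta<\gamma}$ separates points of $K_\gamma$, and it is enough to test on the cofinal set $\alpha\le\beta<\gamma$; there $r_{\beta\gamma}\circ\sigma_{\alpha\gamma}=\sigma_{\alpha\beta}$ (from $\sigma_{\alpha\gamma}=\sigma_{\beta\gamma}\circ\sigma_{\alpha\beta}$ and $r_{\beta\gamma}\circ\sigma_{\beta\gamma}=\mathrm{id}$) reduces the check to the inductive hypothesis.

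I expect the successor step to be the main obstacle, in the sense that it carries the real idea: one must upgrade the mere membership $r_{\gamma+1}(x)\in\sigma_{\gamma,\gamma+1}[K_\gamma]$ to the exact equality $r_{\gamma+1}(x)=\sigma_{\gamma,\gamma+1}\big(r_\gamma(x)\big)$, and this hinges on $\sigma_{\gamma,\gamma+1}$ being a genuine section of $r_{\gamma,\gamma+1}$. The limit step is where continuity of the sequence is indispensable, since without it the inductively established equalities would not propagate through limit stages. Everything else is routine bookkeeping with the composition and cone relations.
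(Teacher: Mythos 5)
Your proof is correct and follows essentially the same route as the paper's: the easy inclusion, then for $x$ in the set \eqref{eq:imsigmaalpha} a transfinite induction on $\beta\ge\alpha$ showing $r_\beta\big(\sigma_\alpha(r_\alpha(x))\big)=r_\beta(x)$ (the paper leaves the successor and limit steps implicit; you have correctly filled them in, including the use of continuity at limit stages), concluding since the maps $r_\beta$ separate points of $K$.
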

\begin{proof}
Obviously $\sigma_\alpha[K_\alpha]$ is contained in \eqref{eq:imsigmaalpha}. To prove the equality, take $x$ in \eqref{eq:imsigmaalpha} and show by induction on $\beta$ that $r_\beta\big((\sigma_\alpha\circ r_\alpha)(x)\big)=r_\beta(x)$, for all $\beta\ge\alpha$. Since the maps $r_\beta$, $\beta\ge\alpha$, separate the points of $K$, it follows that $(\sigma_\alpha\circ r_\alpha)(x)=x$.
\end{proof}

A collection $\mathcal D$ of nonempty open subsets of a topological space $\mathcal X$ is called {\em dense\/} in the topology of $\mathcal X$ if every nonempty open subset of $\mathcal X$ contains an element of $\mathcal D$. Obviously, if $\mathcal D$ is dense, then $\mathcal X$ has ccc if and only if every family of pairwise disjoint elements
of $\mathcal D$ is countable. We are interested in determining conditions under which an inverse limit has ccc and to this aim we describe in the next lemma a convenient dense subset of its topology.
\begin{lem}
Let $\mathcal K=\big((K_\alpha)_{\alpha\in\theta},(r_{\alpha\beta})_{\alpha\le\beta\in\theta}\big)$ be a continuous inverse sequence of compact Hausdorff spaces with a right inverse $(\sigma_{\alpha\beta})_{\alpha\le\beta\in\theta}$ and let $\big(K,(r_\alpha)_{\alpha\in\theta}\big)$ be an inverse limit of $\mathcal K$. Consider the collection $\mathcal D$ of open subsets of $K$ of the form $r_{\alpha+1}^{-1}[U]$, with $U$ a nonempty open subset
of $K_{\alpha+1}\setminus\sigma_{\alpha,\alpha+1}[K_\alpha]$ and $\alpha+1<\theta$. If $\bigcup_{\alpha\in\theta}\sigma_\alpha[K_\alpha]$ has empty interior in $K$, then $\mathcal D$ is dense in the topology of $K$.
\end{lem}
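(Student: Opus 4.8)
The plan is to show directly that every nonempty open subset $V$ of $K$ contains a member of $\mathcal D$. First I would record two preliminary facts. Since $\big(K,(r_\gamma)_{\gamma\in\theta}\big)$ is a cone we have $r_{\gamma\delta}\circ r_\delta=r_\gamma$ whenever $\gamma\le\delta$, and because $\theta$ is totally ordered, hence directed, a routine intersection argument shows that the sets $r_\gamma^{-1}[W]$, with $\gamma\in\theta$ and $W$ open in $K_\gamma$, form a basis for the topology of $K$. Moreover each $r_\gamma$ is surjective, since $\sigma_\gamma$ is a right inverse of it, so $r_\gamma^{-1}[W]$ is nonempty exactly when $W$ is nonempty.

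Given $V$, I would choose a basic open set $r_\gamma^{-1}[W]\subseteq V$ with $W$ nonempty and open in $K_\gamma$. The hypothesis that $\bigcup_{\alpha\in\theta}\sigma_\alpha[K_\alpha]$ has empty interior guarantees that this nonempty open set is not contained in that union, so there is a point $x\in r_\gamma^{-1}[W]$ with $x\notin\sigma_\alpha[K_\alpha]$ for every $\alpha$; in particular $x\notin\sigma_\gamma[K_\gamma]$. This is where Lemma~\ref{thm:imagesigmaalpha} enters: negating the description it gives of the image of $\sigma_\gamma$, the condition $x\notin\sigma_\gamma[K_\gamma]$ produces an ordinal $\beta\ge\gamma$ with $\beta+1<\theta$ such that $r_{\beta+1}(x)\notin\sigma_{\beta,\beta+1}[K_\beta]$.

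With this $\beta$ fixed, I would build the required element of $\mathcal D$ at level $\beta+1$. As $\sigma_{\beta,\beta+1}[K_\beta]$ is the continuous image of the compact space $K_\beta$ in the Hausdorff space $K_{\beta+1}$, it is closed, so its complement is open. Since $\gamma\le\beta<\beta+1$ the relation $r_\gamma=r_{\gamma,\beta+1}\circ r_{\beta+1}$ holds, and I would set
\[
U=r_{\gamma,\beta+1}^{-1}[W]\cap\big(K_{\beta+1}\setminus\sigma_{\beta,\beta+1}[K_\beta]\big).
\]
This $U$ is open and contains $r_{\beta+1}(x)$, because $r_{\gamma,\beta+1}\big(r_{\beta+1}(x)\big)=r_\gamma(x)\in W$ and $r_{\beta+1}(x)\notin\sigma_{\beta,\beta+1}[K_\beta]$, so it is nonempty, and it is contained in $K_{\beta+1}\setminus\sigma_{\beta,\beta+1}[K_\beta]$. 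Hence $r_{\beta+1}^{-1}[U]\in\mathcal D$, and since $U\subseteq r_{\gamma,\beta+1}^{-1}[W]$ we obtain $r_{\beta+1}^{-1}[U]\subseteq r_{\beta+1}^{-1}\big[r_{\gamma,\beta+1}^{-1}[W]\big]=r_\gamma^{-1}[W]\subseteq V$, as desired.

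The argument is largely mechanical once these facts are assembled; the one point that needs care—and the real content of the lemma—is the passage from a point avoiding the \emph{whole} union $\bigcup_{\alpha\in\theta}\sigma_\alpha[K_\alpha]$ to a single \emph{successor} level $\beta+1$ at which the projection of that point escapes $\sigma_{\beta,\beta+1}[K_\beta]$. This is exactly the information encoded in Lemma~\ref{thm:imagesigmaalpha}, and it is what forces the relevant index to be a successor, matching the definition of $\mathcal D$. The remaining bookkeeping—checking that the produced set belongs to $\mathcal D$ and sits inside $V$—is then just a matter of chasing the cone identities.
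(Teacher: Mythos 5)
Your proof is correct and follows essentially the same route as the paper's: pick a point of $V$ outside $\bigcup_{\alpha\in\theta}\sigma_\alpha[K_\alpha]$, apply Lemma~\ref{thm:imagesigmaalpha} to extract a successor level $\beta+1<\theta$ at which the point's projection escapes $\sigma_{\beta,\beta+1}[K_\beta]$, and carve out a nonempty open set in that complement whose preimage sits inside $V$. The only cosmetic difference is how the containment in $V$ is arranged: the paper shrinks $V$ to a fiber-saturated set via Lemma~\ref{thm:lemasaturado} and takes the maximal choice $U=K_{\alpha+1}\setminus\big(r_{\alpha+1}[K\setminus V]\cup\sigma_{\alpha,\alpha+1}[K_\alpha]\big)$, whereas you use the standard basis $\big\{r_\gamma^{-1}[W]\big\}$ of the inverse-limit topology and intersect $r_{\gamma,\beta+1}^{-1}[W]$ with the complement --- equivalent devices resting on the same directedness and compactness facts.
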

\begin{proof}
Let $V$ be a nonempty open subset of $K$ and pick $x\in V$ not in $\bigcup_{\alpha\in\theta}\sigma_\alpha[K_\alpha]$. By Lemma~\ref{thm:lemasaturado}, there exists $\beta\in\theta$ with $r_\beta^{-1}\big(r_\beta(x)\big)\subset V$ and by Lemma~\ref{thm:imagesigmaalpha} there exists $\alpha\ge\beta$ with
$\alpha+1<\theta$ and $r_{\alpha+1}(x)$ not in $\sigma_{\alpha,\alpha+1}[K_\alpha]$. Then $r_{\alpha+1}^{-1}\big(r_{\alpha+1}(x)\big)\subset r_\beta^{-1}\big(r_\beta(x)\big)\subset V$ and setting
\[U=K_{\alpha+1}\setminus\big(r_{\alpha+1}[K\setminus V]\cup\sigma_{\alpha,\alpha+1}[K_\alpha]\big),\]
we have that $r_{\alpha+1}(x)\in U$ and $r_{\alpha+1}^{-1}[U]\subset V$.
\end{proof}

\begin{cor}\label{thm:Kccc}
Let $\mathcal K=\big((K_\alpha)_{\alpha\in\omega_1},(r_{\alpha\beta})_{\alpha\le\beta\in\omega_1}\big)$ be a continuous inverse sequence of compact metric spaces with a right inverse $(\sigma_{\alpha\beta})_{\alpha\le\beta\in\omega_1}$ and let $\big(K,(r_\alpha)_{\alpha\in\omega_1}\big)$ be an inverse limit
of $\mathcal K$. Assume that $\bigcup_{\alpha\in\omega_1}\sigma_\alpha[K_\alpha]$ has empty interior
in $K$. Then $K$ does not have ccc if and only if there exist an uncountable subset $\Lambda$ of $\omega_1$ and a family
$(U_\alpha)_{\alpha\in\Lambda}$, with each $U_\alpha$ a nonempty open subset of $K_{\alpha+1}\setminus\sigma_{\alpha,\alpha+1}[K_\alpha]$,
such that the sets $r_{\alpha+1}^{-1}[U_\alpha]$, $\alpha\in\Lambda$, are pairwise disjoint.
\end{cor}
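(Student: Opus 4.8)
The plan is to read this off from the density of the collection $\mathcal{D}$ supplied by the preceding lemma, combined with the observation recorded just before that lemma: when $\mathcal{D}$ is dense, $K$ has ccc if and only if every pairwise disjoint subfamily of $\mathcal{D}$ is countable. The empty interior hypothesis on $\bigcup_{\alpha\in\omega_1}\sigma_\alpha[K_\alpha]$ is exactly what the preceding lemma requires, so $\mathcal{D}$ is dense; hence the failure of ccc for $K$ is equivalent to the existence of an uncountable pairwise disjoint family $\mathcal{F}\subseteq\mathcal{D}$. The whole argument then reduces to translating such a family into the indexed form in the statement, and conversely.

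The easy direction is $(\Leftarrow)$: given $\Lambda$ and $(U_\alpha)_{\alpha\in\Lambda}$ as in the statement, each $r_{\alpha+1}^{-1}[U_\alpha]$ is open by continuity of $r_{\alpha+1}$ and nonempty because $U_\alpha\ne\emptyset$ and $r_{\alpha+1}$ is surjective, the latter since $r_{\alpha+1}\circ\sigma_{\alpha+1}$ is the identity. These sets therefore form an uncountable pairwise disjoint family of nonempty open subsets of $K$, so $K$ fails ccc.

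For $(\Rightarrow)$, I would begin with an uncountable pairwise disjoint $\mathcal{F}\subseteq\mathcal{D}$. By the definition of $\mathcal{D}$, each member of $\mathcal{F}$ can be written as $r_{\alpha+1}^{-1}[U]$ with $U$ a nonempty open subset of $K_{\alpha+1}\setminus\sigma_{\alpha,\alpha+1}[K_\alpha]$; I fix one such representation per member. The crux is to pass from this family, in which the index $\alpha$ may repeat, to a family indexed injectively by an uncountable $\Lambda\subseteq\omega_1$. The key point is that only countably many members of $\mathcal{F}$ can carry a given index $\alpha$: if $r_{\alpha+1}^{-1}[U]$ and $r_{\alpha+1}^{-1}[U']$ are disjoint, then disjointness of $U$ and $U'$ follows from the surjectivity of $r_{\alpha+1}$, so the open sets attached to a fixed index $\alpha$ form a pairwise disjoint family in the compact metric space $K_{\alpha+1}$, which is separable and hence ccc; thus there are at most countably many of them.

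Consequently the set $\Lambda$ of indices actually used must be uncountable, since a countable union of countable sets cannot exhaust the uncountable $\mathcal{F}$. Selecting for each $\alpha\in\Lambda$ a single member of $\mathcal{F}$ with that index and letting $U_\alpha$ be its associated open set produces the desired $(U_\alpha)_{\alpha\in\Lambda}$, with pairwise disjointness inherited from $\mathcal{F}$. I expect the only real obstacle to be this bookkeeping step controlling the repetition of indices, and the essential ingredients there are the second countability (equivalently ccc) of the metric spaces $K_{\alpha+1}$ and the surjectivity of $r_{\alpha+1}$ coming from the right inverse.
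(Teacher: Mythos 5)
Your proposal is correct and follows essentially the same route as the paper, whose one-line proof (``follows directly from the lemma, keeping in mind that the spaces $K_\alpha$ have ccc'') is precisely your argument: density of $\mathcal D$ reduces ccc of $K$ to countability of disjoint subfamilies of $\mathcal D$, and the ccc of each compact metric $K_{\alpha+1}$ (together with surjectivity of $r_{\alpha+1}$, coming from the right inverse) bounds the repetition of each index $\alpha$, forcing an uncountable index set $\Lambda$. Your bookkeeping step is exactly the detail the paper leaves implicit.
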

\begin{proof}
Follows directly from the lemma, keeping in mind that the spaces $K_\alpha$ have ccc.
\end{proof}

We recall the following characterization of small Valdivia compacta in terms of inverse limits.
\begin{teo}\label{thm:ValdiviaKubis}
Let $K$ be a Valdivia compact space with $w(K)\le\omega_1$ and let $S$ be a dense $\Sigma$-subset of $K$. Then, there exist a continuous
inverse sequence $\mathcal K=\big((K_\alpha)_{\alpha\in\omega_1},(r_{\alpha\beta})_{\alpha\le\beta\in\omega_1}\big)$ of compact metric spaces with a right inverse $(\sigma_{\alpha\beta})_{\alpha\le\beta\in\omega_1}$ and a family of continuous maps $(r_\alpha:K\to K_\alpha)_{\alpha\in\omega_1}$ such that $\big(K,(r_\alpha)_{\alpha\in\omega_1}\big)$ is an inverse limit of $\mathcal K$ and
\begin{equation}\label{eq:Sigmasubset}
S=\bigcup_{\alpha\in\omega_1}\sigma_\alpha[K_\alpha].
\end{equation}
Conversely, if $\big(K,(r_\alpha)_{\alpha\in\omega_1}\big)$ is an inverse limit of a continuous inverse sequence $\mathcal K=\big((K_\alpha)_{\alpha\in\omega_1},(r_{\alpha\beta})_{\alpha\le\beta\in\omega_1}\big)$ of compact metric spaces with a right inverse $(\sigma_{\alpha\beta})_{\alpha\le\beta\in\omega_1}$, then $K$ is a Valdivia compact space with $w(K)\le\omega_1$ and \eqref{eq:Sigmasubset} is a dense $\Sigma$-subset of $K$.
\end{teo}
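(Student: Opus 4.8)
The plan is to treat the two implications separately. The forward implication is essentially the characterization of small Valdivia compacta established by Kubi\'s and Michalewski in \cite{KubisSmall}: from a dense $\Sigma$-subset $S$ of $K$ one extracts a continuous inverse $\omega_1$-sequence of compact metric spaces with a right inverse whose limit is $K$. The only point requiring attention is the identification \eqref{eq:Sigmasubset}, which I would obtain from Lemma~\ref{thm:imagesigmaalpha}: the union $\bigcup_{\alpha}\sigma_\alpha[K_\alpha]$ is exactly the set of points eventually carried by the sections, and one checks that the construction in \cite{KubisSmall} can be arranged so that this set is precisely the prescribed $S$. I would therefore spend the bulk of the proof on the converse implication, which is self-contained.

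For the converse, write $C_\alpha=\sigma_{\alpha,\alpha+1}[K_\alpha]$, a closed subset of the compact metric space $K_{\alpha+1}$, and let $\rho_\alpha=\sigma_{\alpha,\alpha+1}\circ r_{\alpha,\alpha+1}\colon K_{\alpha+1}\to K_{\alpha+1}$ be the associated retraction onto $C_\alpha$. For each $\alpha\in\omega_1$ fix a countable family $(\phi_{\alpha,n})_{n\in\omega}$ of continuous real-valued functions separating the points of $K_\alpha$, and set $g_{\alpha,n}=\phi_{\alpha,n}-\phi_{\alpha,n}\circ\rho_\alpha$ on $K_{\alpha+1}$, so that $g_{\alpha,n}$ vanishes on $C_\alpha$. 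I would then define a map $\varphi\colon K\to\R^\Gamma$, with $\Gamma$ the disjoint union of a countable set of base coordinates and countably many coordinates for each $\alpha\in\omega_1$, by letting the base coordinates record $\phi_{0,n}(r_0(x))$ and the $\alpha$-th block record $g_{\alpha,n}(r_{\alpha+1}(x))$. Each coordinate is continuous, so $\varphi$ is continuous, and since $\vert\Gamma\vert=\omega_1$ this gives $w(K)\le\omega_1$ as soon as $\varphi$ is an embedding; the latter is automatic from injectivity, $K$ being compact and $\R^\Gamma$ Hausdorff.

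The three things left to verify are injectivity, density of $S$, and the equality $S=\varphi^{-1}[\Sigma(\Gamma)]$. Injectivity I would prove by transfinite induction on $\alpha$, showing that $\varphi(x)=\varphi(y)$ forces $r_\alpha(x)=r_\alpha(y)$: the base coordinates handle $\alpha=0$; at a successor, the identity $\phi_{\alpha,n}(r_{\alpha+1}(x))=g_{\alpha,n}(r_{\alpha+1}(x))+\phi_{\alpha,n}\big(\sigma_{\alpha,\alpha+1}(r_\alpha(x))\big)$ recovers agreement at $\alpha+1$ from agreement at $\alpha$; and at a limit $\lambda$ the continuity of the inverse sequence means the maps $r_{\beta\lambda}$, $\beta<\lambda$, separate points of $K_\lambda$, so agreement below $\lambda$ propagates to $\lambda$. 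Since the $r_\alpha$ separate points of $K$, this yields $x=y$. For density I would apply Lemma~\ref{thm:lemasaturado} to the singleton $F=\{x\}$ inside a given nonempty open $V$: there is $\beta$ with $r_\beta^{-1}\big(r_\beta(x)\big)\subset V$, and then $\sigma_\beta\big(r_\beta(x)\big)\in\sigma_\beta[K_\beta]\cap V\subset S\cap V$.

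Finally, the support computation is the heart of the matter. Because the $g_{\alpha,n}$ vanish exactly on $C_\alpha$ and the $\phi_{\alpha,n}$ separate points, the $\alpha$-th block of $\varphi(x)$ is nonzero for some $n$ if and only if $r_{\alpha+1}(x)\notin\sigma_{\alpha,\alpha+1}[K_\alpha]$. Hence the support of $\varphi(x)$ is countable if and only if the set $A(x)=\big\{\alpha\in\omega_1:r_{\alpha+1}(x)\notin\sigma_{\alpha,\alpha+1}[K_\alpha]\big\}$ is countable. By Lemma~\ref{thm:imagesigmaalpha}, $x\in\bigcup_{\alpha}\sigma_\alpha[K_\alpha]$ precisely when $A(x)$ is bounded in $\omega_1$, and since $\omega_1$ is regular, boundedness and countability coincide for subsets of $\omega_1$. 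Therefore $\varphi(x)\in\Sigma(\Gamma)$ if and only if $x\in S$, which is the desired equality. I expect the main obstacle to be arranging the coordinate functions so that the support is governed exactly by the sections---this is precisely what the difference functions $g_{\alpha,n}$ accomplish---while the essential set-theoretic input is the regularity of $\omega_1$.
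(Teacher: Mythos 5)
Your proposal is correct, but it takes a genuinely different route on the half where the paper does no work: the paper's entire proof consists of two citations, deferring the forward direction to \cite[Lemma~1.3]{ArgyrosCorson} and the converse (that \eqref{eq:Sigmasubset} is a dense $\Sigma$-subset of $K$) to the proof of \cite[Theorem~4.2]{KubisSmall}, whereas you replace the latter citation by a self-contained construction. Your argument checks out: the injectivity induction works (continuity of the sequence handles limit stages, and the cone maps $r_\alpha$ separate points of the inverse limit), density of $S$ follows exactly as you say from Lemma~\ref{thm:lemasaturado} applied to $F=\{x\}$ together with $\sigma_\beta\big(r_\beta(x)\big)\in r_\beta^{-1}\big(r_\beta(x)\big)$, and the support computation correctly combines Lemma~\ref{thm:imagesigmaalpha} (membership in $\bigcup_\alpha\sigma_\alpha[K_\alpha]$ is boundedness of $A(x)$) with regularity of $\omega_1$ (boundedness equals countability). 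What your approach buys is transparency: it shows precisely how the retractions generate the $\Sigma$-structure, and it reuses only lemmas already proved in the paper, making the theorem self-contained modulo the forward direction; the paper's version buys brevity. One notational slip you should fix: as written, $g_{\alpha,n}=\phi_{\alpha,n}-\phi_{\alpha,n}\circ\rho_\alpha$ does not typecheck, since $\phi_{\alpha,n}$ is defined on $K_\alpha$ while $\rho_\alpha$ maps $K_{\alpha+1}$ into itself; you need the separating family of $K_{\alpha+1}$, i.e., $g_{\alpha,n}=\phi_{\alpha+1,n}-\phi_{\alpha+1,n}\circ\rho_\alpha$, and correspondingly the recovery identity in the successor step should read $\phi_{\alpha+1,n}\big(r_{\alpha+1}(x)\big)=g_{\alpha,n}\big(r_{\alpha+1}(x)\big)+\phi_{\alpha+1,n}\big(\sigma_{\alpha,\alpha+1}(r_\alpha(x))\big)$, using $\rho_\alpha\circ r_{\alpha+1}=\sigma_{\alpha,\alpha+1}\circ r_\alpha$; with this shift, the vanishing of the block exactly on $C_\alpha$ and the inductive step both go through verbatim. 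On the forward direction you remain at citation level, which matches the paper's own treatment; note only that the paper attributes the construction realizing the prescribed $S$ as $\bigcup_{\alpha\in\omega_1}\sigma_\alpha[K_\alpha]$ to \cite[Lemma~1.3]{ArgyrosCorson} rather than to \cite{KubisSmall}, so your gloss that ``the construction in \cite{KubisSmall} can be arranged'' to hit the given $S$ is the one point where a careful writeup would chase the reference.
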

\begin{proof}
The first part of the statement follows from \cite[Lemma~1.3]{ArgyrosCorson} and the fact that \eqref{eq:Sigmasubset} is a dense $\Sigma$-subset of $K$ follows from the proof of \cite[Theorem~4.2]{KubisSmall}.
\end{proof}

In view of Theorem~\ref{thm:ValdiviaKubis}, when a small Valdivia compact space $K$ is represented as an inverse limit of a continuous $\omega_1$-sequence
of compact metric spaces with a right inverse, the assumption of Corollary~\ref{thm:Kccc} states that a certain $\Sigma$-subset of $K$ has empty interior.
It turns out that this condition is satisfied for a Valdivia compact space without $G_\delta$ points, as we now show.
Recall that a compact Hausdorff space is said to be {\em Corson\/} if it is a $\Sigma$-subset of itself.
\begin{lem}\label{thm:ValdiviaGdeltaSigma}
If $K$ is a Valdivia compact space without $G_\delta$ points, then every $\Sigma$-subset of $K$ has empty interior.
\end{lem}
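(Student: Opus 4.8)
The plan is to argue by contradiction. Identifying $K$ with its image under the continuous injection $\varphi$ witnessing that $S$ is a $\Sigma$-subset, I may assume $K\subseteq\R^\Gamma$ and $S=K\cap\Sigma(\Gamma)$, so that a point $x\in K$ lies in $S$ exactly when its support $\sigma(x)=\{\gamma\in\Gamma:x_\gamma\ne0\}$ is countable. Suppose $S$ had nonempty interior and fix a nonempty open $U\subseteq S$, a point $x_0\in U$, a finite set $F\subseteq\Gamma$ and $\varepsilon>0$ with $W:=\{y\in K:|y_\gamma-x_{0,\gamma}|<\varepsilon\text{ for }\gamma\in F\}\subseteq U$. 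The goal is to produce a $G_\delta$ point of $K$, contradicting the hypothesis. For a countable $A\subseteq\Gamma$ and $x\in K$, write $C_A(x)=\{y\in K:y_\gamma=x_\gamma\text{ for }\gamma\in A\}$; since $A$ is countable and the projection $K\to\R^A$ is continuous into a metrizable space, $C_A(x)$ is a closed $G_\delta$ subset of $K$, and hence $C_A(x)\cap U$ is a $G_\delta$ subset of $K$.

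Next I would run a transfinite recursion of length $\omega_1$ producing points $x_\xi\in W$ and countable sets $A_\xi\subseteq\Gamma$ with $\sigma(x_\xi)\cup F\subseteq A_\xi$, increasing in $\xi$, and coherent in the sense that $x_\xi\in C_{A_\eta}(x_\eta)$ whenever $\eta\le\xi$; by coherence the sets $C_{A_\xi}(x_\xi)$ are then nested. I start with $A_0=\sigma(x_0)\cup F$. At a successor step I examine $C_{A_\xi}(x_\xi)\cap U$, which contains $x_\xi$: if it equals $\{x_\xi\}$, then $\{x_\xi\}$ is a $G_\delta$ subset of $K$ and the recursion halts; otherwise I pick $x_{\xi+1}\in(C_{A_\xi}(x_\xi)\cap U)\setminus\{x_\xi\}$ and set $A_{\xi+1}=A_\xi\cup\sigma(x_{\xi+1})$. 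Since $x_{\xi+1}$ agrees with $x_\xi$ on $A_\xi\supseteq\sigma(x_\xi)$ but differs from it, there is a coordinate $\gamma_\xi\in A_{\xi+1}\setminus A_\xi$ with $x_{\xi+1,\gamma_\xi}\ne0$. At a limit $\lambda$ the sets $C_{A_\eta}(x_\eta)$, $\eta<\lambda$, form a nested family of nonempty closed subsets of the compact space $K$, so their intersection is nonempty; I pick $x_\lambda$ in it and put $A_\lambda=\bigcup_{\eta<\lambda}A_\eta\cup\sigma(x_\lambda)$, which is countable because $\lambda<\omega_1$.

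The step I expect to be the main obstacle is guaranteeing that the points produced at limit stages (and the final point $z$ below) stay inside $U$, since $U$ is merely open and such points could a priori land on its boundary. This is handled by the observation that every point of $C_{A_0}(x_0)$ agrees with $x_0$ on $F\subseteq A_0$, while $W$ is defined by conditions involving only the coordinates in $F$; hence $C_{A_0}(x_0)\subseteq W\subseteq U\subseteq S$, and any point of the nested intersections at limit stages automatically lies in $U$ and therefore has countable support, keeping the recursion well defined.

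Finally, assuming the recursion never halts, I would choose a point $z$ in the nonempty intersection $\bigcap_{\xi<\omega_1}C_{A_\xi}(x_\xi)$. On one hand $z\in C_{A_0}(x_0)\subseteq S$, so $\sigma(z)$ is countable; on the other hand $z\in C_{A_{\xi+1}}(x_{\xi+1})$ forces $z_{\gamma_\xi}=x_{\xi+1,\gamma_\xi}\ne0$ for every $\xi<\omega_1$, and the coordinates $\gamma_\xi$ are pairwise distinct, so $\sigma(z)$ is uncountable, a contradiction. Thus the recursion must halt at some stage, producing a $G_\delta$ point of $K$; as $K$ has no $G_\delta$ points this is absurd, and therefore $S$ has empty interior.
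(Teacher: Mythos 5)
Your proof is correct, but it takes a genuinely different route from the paper's. The paper's argument is a two-liner: a nonempty open subset of the compact Hausdorff space $K$ contains a nonempty closed $G_\delta$ set $F$; since $F\subset S$ is compact and $S$ embeds into $\Sigma(\Gamma)$, $F$ is a nonempty Corson compactum and hence has a $G_\delta$ point by a cited theorem of Kalenda (\cite[Theorem~3.3]{Kalenda}); and a $G_\delta$ point of the closed $G_\delta$ set $F$ is a $G_\delta$ point of $K$, contradicting the hypothesis. You instead prove the needed instance of that cited theorem from scratch: your transfinite recursion of length $\omega_1$, expanding countable supports and using compactness of the nested sets $C_{A_\xi}(x_\xi)$ at limit stages, either halts at a singleton (which is then a closed $G_\delta$ point of $K$) or produces a point $z$ of the total intersection whose support contains the pairwise distinct coordinates $\gamma_\xi$, $\xi\in\omega_1$, contradicting $z\in C_{A_0}(x_0)\subset S$. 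The paper's proof buys brevity and modularity at the price of an external citation; yours is self-contained and elementary, and correctly isolates the one delicate point, namely that limit points of the recursion stay in $S$, which you settle via the containment $C_{A_0}(x_0)\subset W\subset U$. One small slip worth noting: your blanket claim in the first paragraph that $C_A(x)\cap U$ is a $G_\delta$ subset of $K$ is unjustified as stated, since open subsets of compact Hausdorff spaces need not be $G_\delta$; but this is harmless, because in the halting case the nestedness gives $C_{A_\xi}(x_\xi)\subset C_{A_0}(x_0)\subset U$, so the set in question is the entire closed $G_\delta$ set $C_{A_\xi}(x_\xi)$ and the conclusion stands.
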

\begin{proof}
If $S$ is a $\Sigma$-subset of $K$ with nonempty interior, then $S$ contains a nonempty closed $G_\delta$ subset $F$ of $K$. It follows that $F$ is a nonempty Corson compact space and therefore has a $G_\delta$ point (\cite[Theorem~3.3]{Kalenda}).
\end{proof}

\end{section}

\begin{section}{Inverse limits and trees}\label{sec:inversetree}

Recall that a {\em tree\/} is a partially ordered set $(T,{\le})$ such that, for all $t\in T$, the set $\left]\cdot,t\right[=\big\{s\in T:s<t\big\}$ is well-ordered. A subset $X$ of $T$ is called an {\em initial part\/} of $T$ if $\left]\cdot,t\right[\subset X$, for all $t\in X$; a {\em chain\/} if it is totally ordered; an {\em antichain\/} if any two distinct elements of $X$ are incomparable; a {\em path\/} if it is both a chain and an initial part of $T$.
We say that $X\subset T$ satisfies the {\em countable chain condition\/} (ccc) if every antichain contained in $X$ is countable. We denote by $P(T)$ the set of all paths of $T$ and by $P^*(T)$ the set of nonempty paths of $T$. We have a canonical embedding $\mathfrak p$ of $(T,{\le})$ into $\big(P^*(T),{\subset}\big)$ defined by:
\[\mathfrak p(t)=\big\{s\in T:s\le t\big\},\]
for all $t\in T$. Any subset of $T$ endowed with the restriction of $\le$ is itself a tree and is called a {\em subtree\/}
of $T$; an initial part of $T$ endowed with the restriction of $\le$ is called an {\em initial subtree\/} of $T$. If $Z$ is an initial subtree of $T$,
then the paths of $Z$ are precisely the paths of $T$ that are contained in $Z$. Given a path $A\in P(T)$, we set:
\[N_A=\big\{t\in T:\left]\cdot,t\right[=A\big\}.\]

We now introduce the structure that makes the connection between trees and inverse systems.
\begin{defin}
A {\em graded tree\/} $(T,\delta)$ consists of a tree $T$ and a mapping $\delta:T\to\mathbb S\cup\{0\}$ such that $\delta(t)=0$, for every minimal element $t$ of $T$, and $\delta(t)<\delta(s)$, for all $t,s\in T$ with $t<s$. The map $\delta$ is called a {\em grading function\/} for $T$. For every ordinal $\alpha$,
we denote by $T_\alpha$ the initial subtree of $T$ defined as
\[T_\alpha=\big\{t\in T:\delta(t)\le\alpha\big\}\]
and by $\rho_\alpha:P^*(T)\to P^*(T_\alpha)$ the map given by $\rho_\alpha(A)=A\cap T_\alpha$, for all $A\in P^*(T)$.
By a {\em compatible topology\/} on $P^*(T)$ we mean a compact Hausdorff topology on $P^*(T)$ such that the maps $\rho_\alpha:P^*(T)\to P^*(T)$ are continuous, for every ordinal $\alpha$. Given a nonzero ordinal $\theta$, we say that the graded tree $(T,\delta)$ is {\em $\theta$-graded\/} if $\delta(t)<\theta$, for all $t\in T$.
\end{defin}
Note that if $(T,\delta)$ is a graded tree, then:
\begin{equation}\label{eq:zero}
P^*(T_0)=\mathfrak p[\delta^{-1}(0)]
\end{equation}
and, for every ordinal $\lambda$,
\begin{equation}\label{eq:passo}
P^*(T_{\lambda+1})=P^*(T_\lambda)\cupdot\mathfrak p[\delta^{-1}(\lambda+1)],
\end{equation}
with the union in \eqref{eq:passo} being disjoint.

\smallskip

A $\theta$-graded tree $(T,\delta)$ is associated in a natural way with a continuous inverse $\theta$-sequence of sets $\mathcal K_\theta(T,\delta)$ with a right inverse (Proposition~\ref{thm:KthetaTdelta}); moreover, a compatible topology on $P^*(T)$ makes $\mathcal K_\theta(T,\delta)$ an inverse sequence of compact Hausdorff spaces. It turns
out (Proposition~\ref{thm:converseKthetaTdelta}) that every continuous inverse $\theta$-sequence with a right inverse is of the form $\mathcal K_\theta(T,\delta)$, up to isomorphism.
\begin{prop}\label{thm:KthetaTdelta}
Let $(T,\delta)$ be a $\theta$-graded tree and set:
\[\mathcal K_\theta(T,\delta)=\Big(\big(P^*(T_\alpha)\big)_{\alpha\in\theta},(\rho_{\alpha\beta})_{\alpha\le\beta\in\theta}\Big),\]
where $\rho_{\alpha\beta}=\rho_\alpha\vert_{P^*(T_\beta)}:P^*(T_\beta)\to P^*(T_\alpha)$. Then:
\begin{itemize}
\item[(a)] $\mathcal K_\theta(T,\delta)$ is a continuous inverse sequence of sets;
\item[(b)] $\big(P^*(T),(\rho_\alpha)_{\alpha\in\theta}\big)$ is an inverse limit of $\mathcal K_\theta(T,\delta)$;
\item[(c)] the inclusion maps $P^*(T_\alpha)\to P^*(T_\beta)$, $\alpha\le\beta\in\theta$, constitute a right inverse of $\mathcal K_\theta(T,\delta)$
and the induced maps $P^*(T_\alpha)\to P^*(T)$ are the inclusion maps as well.
\end{itemize}
Moreover, if $P^*(T)$ is endowed with a compatible topology and each $P^*(T_\alpha)$ has the subspace topology, then (a), (b) and (c) hold when ``set'' is replaced with ``compact Hausdorff space''.
\end{prop}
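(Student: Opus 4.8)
The plan is to reduce every assertion to the concrete bijection criterion recorded after \eqref{eq:canliminv}, so that all of (a), (b), (c) become elementary computations with the intersections $A\cap T_\alpha$ and with the gluing of compatible families of paths. Before starting I would record three structural facts. First, for $A\in P^*(T)$ the set $\rho_\alpha(A)=A\cap T_\alpha$ is again a nonempty path: it is a chain and an initial part of $T_\alpha$ because $T_\alpha$ is an initial subtree, and it is nonempty because the $\le$-least element of $A$ is a minimal element of $T$, hence lies in $\delta^{-1}(0)\subset T_\alpha$. Second, since each $T_\alpha$ is an initial subtree, the already-quoted fact that the paths of an initial subtree $Z$ are exactly the paths of $T$ contained in $Z$ lets me regard $P^*(T_\alpha)$ as the subset $\{A\in P^*(T):A\subset T_\alpha\}$ of $P^*(T)$; this makes the restrictions $\rho_{\alpha\beta}=\rho_\alpha\vert_{P^*(T_\beta)}$ and the inclusion maps of (c) well defined. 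Third, the grading hypothesis $\delta(s)<\delta(t)$ for $s<t$ (together with $\delta(t)\in\mathbb S\cup\{0\}$) gives $T_\lambda=\bigcup_{\beta<\lambda}T_\beta$ for every limit ordinal $\lambda$, and $T=\bigcup_{\alpha<\theta}T_\alpha$ because $(T,\delta)$ is $\theta$-graded; these are the identities that drive the limit steps.

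For (a), the inverse-system axioms $\rho_{\alpha\alpha}=\mathrm{id}$ and $\rho_{\alpha\beta}\circ\rho_{\beta\gamma}=\rho_{\alpha\gamma}$ follow from $A\cap T_\alpha=A$ when $A\subset T_\alpha$ and from $T_\alpha\subset T_\beta$. Continuity at a limit $\lambda$ and the whole of (b) are the same argument applied with $\lambda$, respectively with $\theta$: I would show that $A\mapsto(A\cap T_\beta)_\beta$ is a bijection of $P^*(T_\lambda)$ (resp.\ $P^*(T)$) onto the set \eqref{eq:canliminv}. Injectivity is immediate from $A=\bigcup_\beta(A\cap T_\beta)$, which uses $T_\lambda=\bigcup_{\beta<\lambda}T_\beta$ (resp.\ $T=\bigcup_{\alpha<\theta}T_\alpha$). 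For surjectivity, given a compatible thread $(A_\beta)_\beta$ I set $A=\bigcup_\beta A_\beta$ and check that $A$ is a nonempty path and that $A\cap T_\beta=A_\beta$; compatibility makes the union a chain and yields $A\cap T_\beta=A_\beta$, while the initial-part property is exactly where the grading is needed: if $s<t\in A$, then $t\in A_\beta$ for some $\beta$, and $\delta(s)<\delta(t)\le\beta$ forces $s\in T_\beta$, whence $s\in A_\beta\subset A$.

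For (c), the inclusions $\iota_{\alpha\beta}\colon P^*(T_\alpha)\hookrightarrow P^*(T_\beta)$ are right inverses of $\rho_{\alpha\beta}$ (again $A\cap T_\alpha=A$ for $A\subset T_\alpha$) and satisfy $\iota_{\beta\gamma}\circ\iota_{\alpha\beta}=\iota_{\alpha\gamma}$ because they are plain inclusions; thus they form a right inverse. The inclusion $P^*(T_\alpha)\hookrightarrow P^*(T)$ is a right inverse of $\rho_\alpha$ and is compatible with the $\iota_{\alpha\beta}$, so by the uniqueness of the induced maps recalled after the definition of right inverse, the induced maps coincide with these inclusions.

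The topological upgrade is where I expect the only real care to be needed. Given a compatible topology on $P^*(T)$, each $\rho_\alpha\colon P^*(T)\to P^*(T)$ is continuous and idempotent, and $P^*(T_\alpha)=\{A:\rho_\alpha(A)=A\}$ is the equalizer of $\rho_\alpha$ and the identity, hence closed in the compact Hausdorff space $P^*(T)$ and therefore compact Hausdorff in the subspace topology. Continuity of $\rho_\alpha$, of the restrictions $\rho_{\alpha\beta}$, and of the inclusions is then immediate from continuity of the self-maps $\rho_\alpha$ together with the fact that all the spaces carry subspace topologies. Finally, to see that the set-theoretic inverse limits of (a) and (b) are inverse limits in the compact Hausdorff category, I would note that the canonical maps into \eqref{eq:canliminv} (endowed with the product topology, a closed subspace of a product of compacta) are continuous bijections from a compact space onto a Hausdorff space, hence homeomorphisms; this promotes the bijection criterion to the topological setting and finishes the proof. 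The main obstacle throughout is the verification that the glued set $\bigcup_\beta A_\beta$ is an initial part of $T$ and, on the topological side, the identification of $P^*(T_\alpha)$ as a closed equalizer, both of which hinge on the strict monotonicity of the grading function.
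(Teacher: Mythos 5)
Your proposal is correct and follows essentially the same route as the paper: the paper declares (a)--(c) straightforward (your verifications via the bijection criterion for \eqref{eq:canliminv} are exactly the intended elementary computations), and its one substantive remark for the topological upgrade is precisely your key observation that $P^*(T_\alpha)$ is the fixed-point set of the continuous map $\rho_\alpha$, hence closed in the compact Hausdorff space $P^*(T)$. Your additional point that continuous bijections from compact spaces onto Hausdorff spaces are homeomorphisms is the standard fact implicitly used to promote the set-theoretic limit criterion to the compact Hausdorff category.
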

\begin{proof}
The proof of (a), (b) and (c) is straightforward. For the last statement, note that $P^*(T_\alpha)$ is closed in $P^*(T)$, being the set of fixed
points of the continuous map $\rho_\alpha$.
\end{proof}

\begin{prop}\label{thm:converseKthetaTdelta}
If $\mathcal K=\big((K_\alpha)_{\alpha\in\theta},(r_{\alpha\beta})_{\alpha\le\beta\in\theta}\big)$ is a continuous inverse sequence of sets and $(\sigma_{\alpha\beta})_{\alpha\le\beta\in\theta}$ is a right inverse of $\mathcal K$, then there exist a $\theta$-graded tree $(T,\delta)$ and
a family of bijections $\varphi_\alpha:K_\alpha\to P^*(T_\alpha)$ such that, for all $\alpha,\beta\in\theta$ with
$\alpha\le\beta$, the diagrams:
\begin{equation}\label{eq:diagrams}
\vcenter{\xymatrix@C+5pt{%
K_\beta\ar[r]^-{\varphi_\beta}\ar[d]_{r_{\alpha\beta}}&P^*(T_\beta)\ar[d]^{\rho_{\alpha\beta}}\\
K_\alpha\ar[r]^-{\varphi_\alpha}&P^*(T_\alpha)}}\qquad
\vcenter{\xymatrix@C+5pt{%
K_\beta\ar[r]^-{\varphi_\beta}&P^*(T_\beta)\\
K_\alpha\ar[r]^-{\varphi_\alpha}\ar[u]^{\sigma_{\alpha\beta}}&P^*(T_\alpha)\ar[u]_{\text{inclusion}}}}
\end{equation}
commute. In particular, if $\big(K,(r_\alpha)_{\alpha\in\theta}\big)$ is an inverse limit of $\mathcal K$, then there exists a bijection
$\varphi:K\to P^*(T)$ such that the diagrams:
\[\xymatrix@C+5pt{%
K\ar[r]^-{\varphi}\ar[d]_{r_\alpha}&P^*(T)\ar[d]^{\rho_\alpha}\\
K_\alpha\ar[r]^-{\varphi_\alpha}&P^*(T_\alpha)}\qquad
\xymatrix@C+5pt{%
K\ar[r]^-{\varphi}&P^*(T)\\
K_\alpha\ar[r]^-{\varphi_\alpha}\ar[u]^{\sigma_\alpha}&P^*(T_\alpha)\ar[u]_{\text{inclusion}}}\]
commute, for all $\alpha\in\theta$. Moreover, if $\mathcal K$ is a continuous inverse sequence of compact Hausdorff spaces, then the topology
on $P^*(T)$ that makes $\varphi$ a homeomorphism is a compatible topology and each map $\varphi_\alpha$ is a homeomorphism if $P^*(T_\alpha)$
is endowed with the subspace topology.
\end{prop}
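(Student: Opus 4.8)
The plan is to build $(T,\delta)$ by using the right inverse to recognize, at each level, the \emph{new} points of the $K_\alpha$. Concretely, take as underlying set of $T$ the tagged disjoint union of $K_0$ and the sets $K_{\alpha+1}\setminus\sigma_{\alpha,\alpha+1}[K_\alpha]$ for $\alpha+1\in\theta$; writing a node as a pair $(\beta,x)$ with $x$ new at level $\beta$ (so $\beta=0$ or $\beta$ is a successor), I set $\delta\big((\beta,x)\big)=\beta$ and declare $(\beta,x)\le(\beta',x')$ iff $\beta\le\beta'$ and $r_{\beta\beta'}(x')=x$. The cocycle identities for the $r_{\alpha\beta}$ make this a partial order, and for a fixed node $(\beta',x')$ its set of strict predecessors is $\{(\gamma,r_{\gamma\beta'}(x')):\gamma<\beta',\ r_{\gamma\beta'}(x')\text{ new at }\gamma\}$, which is carried order-isomorphically into $\beta'$ by $(\gamma,\cdot)\mapsto\gamma$; hence each $\left]\cdot,t\right[$ is well-ordered and $T$ is a tree. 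Since $K_0$ consists entirely of new points, $(0,r_{0\beta'}(x'))$ is always a predecessor, so the only minimal nodes are those of grade $0$; together with the fact that new points occur only at grade $0$ or at successor grades, this shows $\delta$ takes values in $\mathbb S\cup\{0\}$ and that $(T,\delta)$ is a $\theta$-graded tree.

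Next I introduce the candidate bijections by the uniform formula $\varphi_\beta(y)=\{(\gamma,r_{\gamma\beta}(y)):\gamma\le\beta,\ r_{\gamma\beta}(y)\text{ new at }\gamma\}$ and check $\varphi_\beta(y)\in P^*(T_\beta)$ directly from the order. With this formula the two squares in \eqref{eq:diagrams} reduce to the set identities $\rho_{\alpha\beta}\big(\varphi_\beta(y)\big)=\varphi_\beta(y)\cap T_\alpha=\varphi_\alpha\big(r_{\alpha\beta}(y)\big)$ and $\varphi_\beta\big(\sigma_{\alpha\beta}(z)\big)=\varphi_\alpha(z)$, both of which follow by substituting $r_{\gamma\beta}\circ\sigma_{\alpha\beta}=\sigma_{\alpha\gamma}$ (for $\gamma\ge\alpha$) or $=r_{\gamma\alpha}$ (for $\gamma\le\alpha$) and noting that the points $\sigma_{\alpha\gamma}(z)$ with $\gamma>\alpha$ are never new. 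The crux is bijectivity of $\varphi_\beta$, which I prove by transfinite induction on $\beta$. The base case is immediate since $T_0$ is an antichain. For the successor step I use the disjoint decomposition \eqref{eq:passo}: the second diagram gives $\varphi_{\alpha+1}\circ\sigma_{\alpha,\alpha+1}=\varphi_\alpha$, so $\varphi_{\alpha+1}$ sends $\sigma_{\alpha,\alpha+1}[K_\alpha]$ bijectively onto $P^*(T_\alpha)$, while on the complementary new points it is $x\mapsto\mathfrak p\big((\alpha+1,x)\big)$, a bijection onto $\mathfrak p[\delta^{-1}(\alpha+1)]$.

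The main obstacle is the limit step, and it is exactly where continuity is essential. For limit $\beta$, continuity of $\mathcal K$ makes $\big(K_\beta,(r_{\gamma\beta})_{\gamma<\beta}\big)$ an inverse limit of $(K_\gamma)_{\gamma<\beta}$, while Proposition~\ref{thm:KthetaTdelta}(a), applied to the $\theta$-graded tree $(T,\delta)$ already constructed, makes $\big(P^*(T_\beta),(\rho_{\gamma\beta})_{\gamma<\beta}\big)$ an inverse limit of $\big(P^*(T_\gamma)\big)_{\gamma<\beta}$. By the induction hypothesis the $\varphi_\gamma$, $\gamma<\beta$, commute with the transition maps, hence form an isomorphism of the two truncated systems, so the induced map on limits is a bijection; since $\varphi_\beta$ satisfies the same defining relation $\rho_{\gamma\beta}\circ\varphi_\beta=\varphi_\gamma\circ r_{\gamma\beta}$, it must equal that bijection, closing the induction. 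For the inverse-limit statement I feed the cone $(\varphi_\alpha\circ r_\alpha)$ into the universal property of $P^*(T)$ (Proposition~\ref{thm:KthetaTdelta}(b)) to obtain $\varphi\colon K\to P^*(T)$, invert it symmetrically to see it is a bijection, and verify the two displayed squares by composing with each $\rho_\gamma$ and using $r_\gamma\circ\sigma_\alpha=\sigma_{\alpha\gamma}$ or $r_{\gamma\alpha}$. Finally, transporting the topology of $K$ along $\varphi$, I check compatibility from the identity $\varphi^{-1}\circ\rho_\alpha\circ\varphi=\sigma_\alpha\circ r_\alpha$, which is continuous because the induced sections $\sigma_\alpha$ are continuous, and I conclude that each $\varphi_\alpha$ is a homeomorphism since it is a continuous bijection from the compact space $K_\alpha$ onto the Hausdorff space $P^*(T_\alpha)$.
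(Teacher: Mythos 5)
Your proposal is correct and follows essentially the same route as the paper's proof: the same tree of ``new'' points with the order $(\beta,x)\le(\beta',x')\Leftrightarrow\beta\le\beta'$ and $r_{\beta\beta'}(x')=x$ (the paper realizes it as a subtree of the larger tree $\overline T=\bigcup_{\alpha\in\theta}K_\alpha\times\{\alpha\}$, which is only a cosmetic difference), the same maps $\varphi_\alpha$, the same key computation $\sigma_{\alpha,\gamma+1}=\sigma_{\gamma,\gamma+1}\circ\sigma_{\alpha\gamma}$ showing intermediate points are never new for the right-hand square, and the same transfinite induction for bijectivity using \eqref{eq:passo} and the continuity of both $\mathcal K$ and $\mathcal K_\theta(T,\delta)$. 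You merely spell out the limit step, the universal-property argument for $\varphi$, and the topological assertions (via $\varphi^{-1}\circ\rho_\alpha\circ\varphi=\sigma_\alpha\circ r_\alpha$ and the continuity of the induced sections $\sigma_\alpha$) that the paper dismisses as straightforward.
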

\begin{proof}
Consider the set $\overline T=\bigcup_{\alpha\in\theta}\big(K_\alpha\times\{\alpha\}\big)$ endowed with the partial order defined by:
\[(x,\alpha)\le(y,\beta)\Longleftrightarrow\text{$\alpha\le\beta$ and $r_{\alpha\beta}(y)=x$}.\]
For every $t\in\overline T$, the projection $\pi_2:\overline T\to\theta$ restricts to an order isomorphism between $\left]\cdot,t\right[$ and a subset of
$\theta$, so that $\overline T$ is a tree. Let $T$ be the subtree of $\overline T$ defined by:
\[T=\big(K_0\times\{0\}\big)\cup\bigcup\big\{\big(K_{\alpha+1}\setminus\sigma_{\alpha,\alpha+1}[K_\alpha]\big)\times\{\alpha+1\}:\text{$\alpha\in\theta$
with $\alpha+1\in\theta$}\big\}\]
and let $\delta:T\to\mathbb S\cup\{0\}$ be the $\theta$-grading function given by the restriction of $\pi_2$.
For each $\alpha\in\theta$, define $\varphi_\alpha:K_\alpha\to P^*(T_\alpha)$ by setting:
\[\varphi_\alpha(x)=\big\{(z,\gamma)\in T:(z,\gamma)\le(x,\alpha)\big\}\]
for all $x\in K_\alpha$. It is clear that the left diagram in \eqref{eq:diagrams} commutes. To see that the right diagram commutes,
pick $x\in K_\alpha$ and observe that every $t\in\overline T$ with $(x,\alpha)<t\le\big(\sigma_{\alpha\beta}(x),\beta\big)$ is not in $T$; namely, if such $t$ were in $T$, it would be of the form $(z,\gamma+1)$, with $\alpha\le\gamma<\beta$ and:
\[z=r_{\gamma+1,\beta}\big(\sigma_{\alpha\beta}(x)\big)=\sigma_{\alpha,\gamma+1}(x)=\sigma_{\gamma,\gamma+1}\big(\sigma_{\alpha\gamma}(x)\big),\]
which contradicts $z\not\in\sigma_{\gamma,\gamma+1}[K_\gamma]$.
The fact that the maps $\varphi_\alpha$ are bijective is proven by induction on $\alpha$ using the commutativity of the diagrams in \eqref{eq:diagrams},
keeping in mind \eqref{eq:passo} and the continuity of the inverse sequences $\mathcal K$ and $\mathcal K_\theta(T,\delta)$. The proof of the remaining
assertions in the statement of the proposition is straightforward.
\end{proof}

Using the relation between graded trees and continuous inverse sequences with a right inverse presented above, we obtain now a characterization
of small Valdivia compacta in terms of graded trees.
\begin{teo}\label{thm:propValdiviatree}
Let $K$ be a Valdivia compact space with $w(K)\le\omega_1$ and let $S$ be a dense $\Sigma$-subset of $K$. Then, there exist an $\omega_1$-graded tree
$(T,\delta)$, a compatible topology on $P^*(T)$ such that $P^*(T_\alpha)$ is metrizable, for all $\alpha\in\omega_1$, and a homeomorphism
between $K$ and $P^*(T)$ that carries $S$ to the set:
\begin{equation}\label{eq:sigmasubsettree}
\bigcup_{\alpha\in\omega_1}P^*(T_\alpha)=\big\{A\in P^*(T):\vert A\vert\le\omega\big\}.
\end{equation}
Conversely, given an $\omega_1$-graded tree $(T,\delta)$ and a compatible topology on $P^*(T)$ such that $P^*(T_\alpha)$ is metrizable, for all
$\alpha\in\omega_1$, then $K=P^*(T)$ is a Valdivia compact space with $w(K)\le\omega_1$ and \eqref{eq:sigmasubsettree} is a dense $\Sigma$-subset of $K$.
\end{teo}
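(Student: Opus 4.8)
The plan is to deduce both directions of Theorem~\ref{thm:propValdiviatree} by transporting the corresponding statements of Theorem~\ref{thm:ValdiviaKubis} through the tree/inverse-sequence dictionary established in Proposition~\ref{thm:KthetaTdelta} and Proposition~\ref{thm:converseKthetaTdelta}. The key observation that glues the two pictures together is that, under the identification of $K$ with $P^*(T)$, the induced right-inverse maps $\sigma_\alpha:K_\alpha\to K$ become exactly the inclusions $P^*(T_\alpha)\hookrightarrow P^*(T)$; hence the $\Sigma$-subset $\bigcup_\alpha\sigma_\alpha[K_\alpha]$ from \eqref{eq:Sigmasubset} is carried precisely to $\bigcup_{\alpha\in\omega_1}P^*(T_\alpha)$, which matches \eqref{eq:sigmasubsettree}.

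For the forward direction I would first apply Theorem~\ref{thm:ValdiviaKubis} to $(K,S)$ to obtain a continuous inverse $\omega_1$-sequence $\mathcal K=\big((K_\alpha)_{\alpha\in\omega_1},(r_{\alpha\beta})\big)$ of compact metric spaces, a right inverse $(\sigma_{\alpha\beta})$, and an inverse limit $\big(K,(r_\alpha)\big)$ with $S=\bigcup_{\alpha}\sigma_\alpha[K_\alpha]$. Then I would feed $\mathcal K$ and $(\sigma_{\alpha\beta})$ into Proposition~\ref{thm:converseKthetaTdelta} with $\theta=\omega_1$; this yields an $\omega_1$-graded tree $(T,\delta)$, bijections $\varphi_\alpha:K_\alpha\to P^*(T_\alpha)$, and a bijection $\varphi:K\to P^*(T)$ making the displayed squares commute. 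The final assertion of Proposition~\ref{thm:converseKthetaTdelta} says that, since $\mathcal K$ is an inverse sequence of compact Hausdorff spaces, the topology on $P^*(T)$ transported by $\varphi$ is a compatible topology and each $\varphi_\alpha$ is a homeomorphism; since each $K_\alpha$ is metric, each $P^*(T_\alpha)$ is metrizable. It remains to chase $S$ through $\varphi$: commutativity of the right square $\varphi\circ\sigma_\alpha=(\text{inclusion})\circ\varphi_\alpha$ gives $\varphi[\sigma_\alpha[K_\alpha]]=P^*(T_\alpha)$, so $\varphi[S]=\bigcup_{\alpha\in\omega_1}P^*(T_\alpha)$, and the equality of this union with $\{A\in P^*(T):\vert A\vert\le\omega\}$ in \eqref{eq:sigmasubsettree} follows because $A\in P^*(T_\alpha)$ means $A\subset T_\alpha$, equivalently $\delta$ is bounded by $\alpha$ on $A$; every countable path lies in some $T_\alpha$ by regularity of $\omega_1$, while an uncountable path must meet cofinally many levels and hence lie in no $T_\alpha$.

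For the converse I would run the same dictionary in reverse: given $(T,\delta)$ and a compatible topology with each $P^*(T_\alpha)$ metrizable, Proposition~\ref{thm:KthetaTdelta} shows that $\mathcal K_{\omega_1}(T,\delta)=\big((P^*(T_\alpha))_{\alpha\in\omega_1},(\rho_{\alpha\beta})\big)$ is a continuous inverse sequence of compact Hausdorff spaces, indeed of compact metric spaces by the metrizability hypothesis, that $\big(P^*(T),(\rho_\alpha)\big)$ is an inverse limit, and that the inclusions $P^*(T_\alpha)\hookrightarrow P^*(T_\beta)$ form a right inverse with induced maps the inclusions $P^*(T_\alpha)\hookrightarrow P^*(T)$. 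The converse half of Theorem~\ref{thm:ValdiviaKubis} applied to this data immediately gives that $K=P^*(T)$ is Valdivia with $w(K)\le\omega_1$ and that $\bigcup_{\alpha\in\omega_1}(\text{induced image})=\bigcup_{\alpha\in\omega_1}P^*(T_\alpha)$ is a dense $\Sigma$-subset, which is exactly \eqref{eq:sigmasubsettree}. I expect the main subtlety to be purely bookkeeping rather than conceptual: verifying the set-theoretic identity $\bigcup_{\alpha\in\omega_1}P^*(T_\alpha)=\{A\in P^*(T):\vert A\vert\le\omega\}$ and confirming that the induced maps really are the inclusions so that the $\Sigma$-subsets on the two sides correspond. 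Since both propositions do the heavy lifting, the proof is essentially a diagram-chase, and I would keep it to a few lines invoking Theorem~\ref{thm:ValdiviaKubis}, Proposition~\ref{thm:KthetaTdelta}, and Proposition~\ref{thm:converseKthetaTdelta}.
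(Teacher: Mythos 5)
Your proposal is correct and follows exactly the route the paper takes: its proof of Theorem~\ref{thm:propValdiviatree} is the one-line combination of Theorem~\ref{thm:ValdiviaKubis} with Propositions~\ref{thm:KthetaTdelta} and \ref{thm:converseKthetaTdelta}, which is precisely your dictionary argument. Your added bookkeeping (chasing $S$ through $\varphi$ via the right-hand squares, and verifying $\bigcup_{\alpha\in\omega_1}P^*(T_\alpha)=\big\{A\in P^*(T):\vert A\vert\le\omega\big\}$ using strict monotonicity of $\delta$ on chains and regularity of $\omega_1$) correctly fills in the details the paper leaves implicit.
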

\begin{proof}
Follows from Theorem~\ref{thm:ValdiviaKubis} using Propositions~\ref{thm:KthetaTdelta} and \ref{thm:converseKthetaTdelta}.
\end{proof}

Let $(T,\delta)$ be a graded tree and let $P^*(T)$ be endowed with a compatible topology. It follows from \eqref{eq:passo} that, for every $\alpha\in\mathbb S$, the set $\mathfrak p[\delta^{-1}(\alpha)]$ is open in $P^*(T_\alpha)$. Thus, the topology on $\delta^{-1}(\alpha)$ induced by $\mathfrak p$
is locally compact Hausdorff and, by \eqref{eq:zero}, the topology on $\delta^{-1}(0)$ induced by $\mathfrak p$ is compact Hausdorff.
Our goal is to construct a compatible topology on $P^*(T)$ from given locally compact Hausdorff topologies on the sets $\delta^{-1}(\alpha)$ satisfying certain compatibility conditions. To this aim, we give the following definition.
\begin{defin}
Let $(T,\delta)$ be a graded tree. For each $\alpha\in\mathbb S\cup\{0\}$, we denote by $g_\alpha:P^*(T)\to\delta^{-1}(\alpha)\cupdot\{\infty\}$ the map defined by $g_\alpha(A)=t$, if $t$ is the (automatically unique) element of $A\cap\delta^{-1}(\alpha)$, and $g_\alpha(A)=\infty$, if $A\cap\delta^{-1}(\alpha)$ is empty; here $\infty$ denotes any point not in $\delta^{-1}(\alpha)$. Given $\alpha,\beta\in\mathbb S\cup\{0\}$ with $\alpha\le\beta$, we set
$g_{\alpha\beta}=g_\alpha\circ\mathfrak p\vert_{\delta^{-1}(\beta)}:\delta^{-1}(\beta)\to\delta^{-1}(\alpha)\cupdot\{\infty\}$.
We call the maps $g_{\alpha\beta}$ the {\em connecting maps\/} of the graded tree $(T,\delta)$.
\end{defin}

When a locally compact Hausdorff topology is given on a set $\mathcal X$, we always endow the disjoint union $\mathcal X\cupdot\{\infty\}$ with the unique compact Hausdorff topology that induces the given topology on $\mathcal X$, i.e., $\mathcal X\cupdot\{\infty\}$ is the one-point compactification of
$\mathcal X$, if $\mathcal X$ is not compact, and the point $\infty$ is isolated in $\mathcal X\cupdot\{\infty\}$, otherwise.
\begin{prop}\label{thm:maquininha}
Let $(T,\delta)$ be a graded tree. If $P^*(T)$ is endowed with a compatible topology and the sets $\delta^{-1}(\alpha)$, $\alpha\in\mathbb S\cup\{0\}$,
are endowed with the topologies induced by $\mathfrak p$, then the maps $g_\alpha$ and the connecting maps $g_{\alpha\beta}$ are continuous, for all $\alpha,\beta\in\mathbb S\cup\{0\}$ with $\alpha\le\beta$; moreover, the topology of $P^*(T)$ coincides with the topology induced by the maps $g_\alpha$, $\alpha\in\mathbb S\cup\{0\}$. Conversely, let the set $\delta^{-1}(\alpha)$ be endowed with a locally compact Hausdorff topology $\tau_\alpha$, for each $\alpha\in\mathbb S$, and let $\delta^{-1}(0)$ be endowed with a compact Hausdorff topology $\tau_0$. If the connecting maps of $(T,\delta)$ are continuous, then there exists a unique compatible topology on $P^*(T)$ such that $\tau_\alpha$ is equal to the topology on $\delta^{-1}(\alpha)$ induced by $\mathfrak p$, for all $\alpha\in\mathbb S\cup\{0\}$.
\end{prop}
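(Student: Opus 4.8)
The statement has two directions: a "forward" direction (a compatible topology on $P^*(T)$ induces continuous connecting maps, and the topology is recovered from the $g_\alpha$) and a "converse" direction (given locally compact Hausdorff topologies $\tau_\alpha$ on $\delta^{-1}(\alpha)$ with continuous connecting maps, there is a unique compatible topology realizing them). My plan is to reduce the converse, which is the substantive part, to the inverse-limit machinery of Section~\ref{sec:inverseValdivia}, using Proposition~\ref{thm:KthetaTdelta} to view $P^*(T)$ as $\varprojlim P^*(T_\alpha)$. The forward direction I expect to be routine: each $g_\alpha$ factors as $g_{\alpha,\delta(A)}$-type composites through the continuous maps $\rho_\alpha$ together with the inclusion $\mathfrak p[\delta^{-1}(\alpha)]\hookrightarrow P^*(T_\alpha)$, which is an open (hence continuous) embedding by the remark following \eqref{eq:passo}; continuity of $g_{\alpha\beta}=g_\alpha\circ\mathfrak p|_{\delta^{-1}(\beta)}$ then follows, and the fact that the $g_\alpha$ separate points and generate the topology comes from the fact that the $\rho_\alpha$ already do (Proposition~\ref{thm:KthetaTdelta}(b)).

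\textbf{The converse direction.} The key idea is to build the compatible topology on $P^*(T)$ by transfinite recursion along $\alpha$, equipping each $P^*(T_\alpha)$ with a compact Hausdorff topology so that $\big((P^*(T_\alpha))_\alpha,(\rho_{\alpha\beta})\big)$ becomes a \emph{continuous} inverse sequence of compact Hausdorff spaces; the desired topology on $P^*(T)=\varprojlim P^*(T_\alpha)$ is then the inverse-limit topology, and Proposition~\ref{thm:KthetaTdelta} guarantees it is compatible. First I would topologize $P^*(T_0)=\mathfrak p[\delta^{-1}(0)]$ by transporting $\tau_0$ via the bijection $\mathfrak p$; this is compact Hausdorff by hypothesis. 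At a successor stage, \eqref{eq:passo} gives the disjoint decomposition $P^*(T_{\lambda+1})=P^*(T_\lambda)\cupdot\mathfrak p[\delta^{-1}(\lambda+1)]$, so I would topologize $P^*(T_{\lambda+1})$ as the pushout/gluing that makes $P^*(T_\lambda)$ closed, makes $\mathfrak p[\delta^{-1}(\lambda+1)]$ open with the topology $\tau_{\lambda+1}$, and makes $\rho_{\lambda,\lambda+1}$ continuous — concretely, using the connecting maps $g_{\alpha,\lambda+1}$ one attaches each new point $\mathfrak p(t)$, $t\in\delta^{-1}(\lambda+1)$, over its "limit" $\rho_\lambda(\mathfrak p(t))\in P^*(T_\lambda)$. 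At a limit stage $\lambda$, continuity of the inverse sequence forces $P^*(T_\lambda)$ to carry the inverse-limit topology of $\big((P^*(T_\beta))_{\beta<\lambda},(\rho_{\beta\gamma})\big)$, which is automatically compact Hausdorff.

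\textbf{Where the work concentrates.} The main obstacle is the successor step: I must verify that the glued topology on $P^*(T_{\lambda+1})$ is genuinely compact Hausdorff and that $\rho_{\lambda,\lambda+1}$ is continuous, and this is exactly where the hypothesis that the \emph{connecting maps are continuous} and that $\tau_{\lambda+1}$ is \emph{locally compact} is used. The natural move is to identify $P^*(T_{\lambda+1})$ with a subspace of the fibered product governed by the map $g_{\lambda,\lambda+1}\colon\delta^{-1}(\lambda+1)\to\delta^{-1}(\lambda)\cupdot\{\infty\}$: a new point $t\in\delta^{-1}(\lambda+1)$ sits over the point of $P^*(T_\lambda)$ determined by $g_{\lambda,\lambda+1}(t)$ (and over the "boundary at infinity" when $g_{\lambda,\lambda+1}(t)=\infty$), and the one-point-compactification convention for $\delta^{-1}(\lambda+1)\cupdot\{\infty\}$ is precisely what makes the attaching map continuous and the result compact. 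Hausdorffness is clear on each of the two open/closed pieces, and separation of a new point from a limit point again uses continuity of $g_{\lambda,\lambda+1}$. Once compactness and continuity of $\rho_{\lambda,\lambda+1}$ are established, the rest is formal: Proposition~\ref{thm:KthetaTdelta} yields compatibility of the resulting topology, the forward direction shows $\tau_\alpha$ is recovered as the topology induced by $\mathfrak p$, and uniqueness follows because any compatible topology makes the $\rho_\alpha$ continuous and hence, by the universal property of the inverse limit, must coincide with the one just constructed.
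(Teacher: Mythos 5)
Your recursion-plus-gluing strategy for the converse can be made to work, but the crucial successor step contains a genuine error as written. You say the gluing is ``governed by the map $g_{\lambda,\lambda+1}$'' and that a new point $t\in\delta^{-1}(\lambda+1)$ sits over the point of $P^*(T_\lambda)$ determined by $g_{\lambda,\lambda+1}(t)$, over a ``boundary at infinity'' when $g_{\lambda,\lambda+1}(t)=\infty$. This is wrong on two counts. First, $\lambda$ is an arbitrary ordinal in your recursion: if $\lambda\notin\mathbb S\cup\{0\}$ then $\delta^{-1}(\lambda)=\emptyset$, so $g_{\lambda,\lambda+1}\equiv\infty$ and carries no information at all. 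Second, even for $\lambda\in\mathbb S\cup\{0\}$, when $g_{\lambda,\lambda+1}(t)=\infty$ the point $\mathfrak p(t)$ does not sit at infinity: it sits over the honest point $\rho_\lambda\big(\mathfrak p(t)\big)=\left]\cdot,t\right[\in P^*(T_\lambda)$, a path that merely skips level $\lambda$, and distinct such $t$'s can have entirely different predecessor paths; a fibered product over $g_{\lambda,\lambda+1}$ alone does not even determine the map $\rho_{\lambda,\lambda+1}$, let alone prove its continuity. The correct move (which an earlier sentence of yours gestures at) is to embed $P^*(T_{\lambda+1})$ into $P^*(T_\lambda)\times\big(\delta^{-1}(\lambda+1)\cupdot\{\infty\}\big)$ via $A\mapsto\big(\rho_\lambda(A),g_{\lambda+1}(A)\big)$; the image is $\big\{(A,\infty):A\in P^*(T_\lambda)\big\}\cup\big\{\big(b(t),t\big):t\in\delta^{-1}(\lambda+1)\big\}$ with $b(t)=\left]\cdot,t\right[$, and it is closed (whence $P^*(T_{\lambda+1})$ compact Hausdorff with the right subspace topologies) precisely when $b:\big(\delta^{-1}(\lambda+1),\tau_{\lambda+1}\big)\to P^*(T_\lambda)$ is continuous. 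Continuity of $b$ needs \emph{all} connecting maps $g_{\beta,\lambda+1}$, $\beta\le\lambda$, together with an inductive invariant you never state: that the topology already built on $P^*(T_\lambda)$ is the initial topology of its maps $g_\beta$. This invariant is available (it is the forward half of the proposition applied to the graded tree $T_\lambda$), but without carrying it along the recursion the successor step does not close. Your uniqueness argument has the same soft spot: the appeal to the universal property of the inverse limit presupposes that an arbitrary compatible topology induces on each $P^*(T_\alpha)$ the topology you constructed, which again requires the forward half (any compatible topology inducing the $\tau_\alpha$'s is the initial topology of the $g_\alpha$'s, whose compactified codomains depend only on the $\tau_\alpha$'s).

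For comparison, the paper avoids the recursion entirely: it endows $P^*(T)$ in one shot with the initial topology of the single map \eqref{eq:todasgalpha} into $\prod_{\alpha\in\delta[T]}\big(\delta^{-1}(\alpha)\cupdot\{\infty\}\big)$, identifies the image with the set of ``threads'' $(t_\alpha)$ satisfying $t_0\ne\infty$ and $g_{\alpha\beta}(t_\beta)=t_\alpha$ whenever $t_\beta\ne\infty$, notes that continuity of the connecting maps makes this set closed (hence $P^*(T)$ compact), obtains continuity of every $\rho_\alpha$ from the identities $g_\beta\circ\rho_\alpha=g_\beta$ for $\beta\le\alpha$ and $g_\beta\circ\rho_\alpha\equiv\infty$ for $\beta>\alpha$, and recovers $\tau_\alpha$ because $g_{\alpha\alpha}$ is the inclusion into the compactification. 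Your successor-step embedding above is exactly the two-coordinate shadow of this global embedding, so once repaired your route is a legitimate, if longer, alternative; what the paper's approach buys is that closedness of the thread set and the $\rho_\alpha$-identities replace all the stage-by-stage bookkeeping, and uniqueness falls out of the forward half with no extra induction. One last small point on your forward direction: $g_\alpha$ factors as $q\circ\rho_\alpha$ where $q$ \emph{collapses} the closed complement of the open set $\mathfrak p[\delta^{-1}(\alpha)]$ to $\infty$; it is the continuity of this collapse map (which uses \eqref{eq:passo} and the one-point-compactification convention), not the open inclusion itself, that is needed.
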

\begin{proof}
Let $P^*(T)$ be endowed with a compatible topology and let each set $\delta^{-1}(\alpha)$ be endowed with the topology induced by $\mathfrak p$. To see that $g_\alpha$ is continuous, note that $g_\alpha=q\circ\rho_\alpha$, where $q:P^*(T_\alpha)\to\delta^{-1}(\alpha)\cupdot\{\infty\}$ is
defined by $q(A)=\mathfrak p^{-1}(A)$, for $A\in\mathfrak p[\delta^{-1}(\alpha)]$, and $q(A)=\infty$, otherwise. The continuity of the connecting maps then follows. The fact that the topology of $P^*(T)$ is induced by the maps $g_\alpha$ is a consequence of the observation that the map:
\begin{equation}\label{eq:todasgalpha}
(g_\alpha)_{\alpha\in\delta[T]}:P^*(T)\longrightarrow\prod_{\alpha\in\delta[T]}\big(\delta^{-1}(\alpha)\cupdot\{\infty\}\big)
\end{equation}
is continuous and injective. To prove the converse, let $P^*(T)$ be endowed with the topology induced by \eqref{eq:todasgalpha} and
let us show that this topology satisfies the required properties.
It is easy to see that the image of \eqref{eq:todasgalpha} is equal to the set:
\begin{multline*}
F=\Big\{(t_\alpha)_{\alpha\in\delta[T]}\in\prod_{\alpha\in\delta[T]}\big(\delta^{-1}(\alpha)\cupdot\{\infty\}\big):\text{$t_0\ne\infty$ and, for all $\alpha,\beta\in\delta[T]$}\\
\text{with $\alpha\le\beta$, if $t_\beta\ne\infty$, then $g_{\alpha\beta}(t_\beta)=t_\alpha$}\Big\}
\end{multline*}
and that the continuity of the connecting maps implies that $F$ is closed; hence $P^*(T)$ is compact. The continuity of $\rho_\alpha$ follows
from the fact that $g_\beta\circ\rho_\alpha=g_\beta$, for $\beta\le\alpha$, and $g_\beta\circ\rho_\alpha\equiv\infty$, for $\beta>\alpha$. Finally,
the topology induced on $\delta^{-1}(\alpha)$ by $\mathfrak p$ is equal to the topology induced by the maps $g_{\beta\alpha}$, with $\beta\in\delta[T]$
and $\beta\le\alpha$. The fact that such topology is equal to $\tau_\alpha$ follows from the continuity of the connecting maps and from the fact that $g_{\alpha\alpha}$ is the inclusion of $\delta^{-1}(\alpha)$ in $\delta^{-1}(\alpha)\cupdot\{\infty\}$.
\end{proof}


\begin{cor}\label{thm:limitapeso}
Let $(T,\delta)$ be a $\theta$-graded tree, $P^*(T)$ be endowed with a compatible topology and the sets $\delta^{-1}(\alpha)$ be endowed with
the topology induced by $\mathfrak p$. Then:
\[w\big(P^*(T)\big)\le\max\Big\{\vert\theta\vert,\sup_{\alpha\in\theta}w\big(\delta^{-1}(\alpha)\big)\Big\}.\]
In particular, if $\delta^{-1}(\alpha)$ is second countable for all $\alpha\in\omega_1$, then $P^*(T_\alpha)$ is second countable,
for all $\alpha\in\omega_1$.\qed
\end{cor}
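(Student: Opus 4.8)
The plan is to read off the weight bound from the embedding already provided by Proposition~\ref{thm:maquininha}. That proposition establishes that the combined map \eqref{eq:todasgalpha}, namely $(g_\alpha)_{\alpha\in\delta[T]}\colon P^*(T)\to\prod_{\alpha\in\delta[T]}\big(\delta^{-1}(\alpha)\cupdot\{\infty\}\big)$, is continuous and injective and that the topology of $P^*(T)$ is the topology induced by it. Since $P^*(T)$ carries a compatible topology it is compact, and the product is Hausdorff, so this continuous injection is a homeomorphism onto its image; equivalently, because the topology is the initial one with respect to the $g_\alpha$ and the map separates points, \eqref{eq:todasgalpha} is a topological embedding. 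Hence $w\big(P^*(T)\big)$ is at most the weight of the product $\prod_{\alpha\in\delta[T]}\big(\delta^{-1}(\alpha)\cupdot\{\infty\}\big)$, and everything reduces to a cardinal estimate for this product.

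Next I would bound that weight by the two standard cardinal inequalities. First, for an arbitrary product of nontrivial spaces one has $w\big(\prod_{i\in I}Y_i\big)\le\max\big\{\omega,\vert I\vert,\sup_{i\in I}w(Y_i)\big\}$, obtained by taking as a base the finite partial specifications formed from fixed bases of the factors; here $\vert I\vert=\vert\delta[T]\vert\le\vert\theta\vert$. Second, I claim that passing to the one-point compactification does not raise the weight, i.e.\ $w\big(\delta^{-1}(\alpha)\cupdot\{\infty\}\big)\le\max\big\{\omega,w(\delta^{-1}(\alpha))\big\}$. This is where local compactness is used: fixing a base of $\delta^{-1}(\alpha)$ consisting of relatively compact open sets and closing it under finite unions, the complements (together with $\infty$) of the resulting relatively compact sets form a neighborhood base at $\infty$, while the base of $\delta^{-1}(\alpha)$ itself handles the remaining points. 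Combining the two estimates gives $w\big(P^*(T)\big)\le\max\big\{\omega,\vert\theta\vert,\sup_{\alpha\in\theta}w(\delta^{-1}(\alpha))\big\}$; the extra term $\omega$ is absorbed into the right-hand side of the desired inequality, since if both $\vert\theta\vert$ and all the fibers were finite then $T$, and hence $P^*(T)$, would be finite and the bound holds trivially.

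For the final assertion I would apply the inequality just proved to the $(\alpha+1)$-graded tree $\big(T_\alpha,\delta\vert_{T_\alpha}\big)$. Its path space $P^*(T_\alpha)$, equipped with the subspace topology inherited from $P^*(T)$, is a compatible topology (it is compact Hausdorff and the restrictions of the $\rho_\beta$ remain continuous), and for $\beta\le\alpha$ the fiber $(\delta\vert_{T_\alpha})^{-1}(\beta)$ coincides with $\delta^{-1}(\beta)$, which is second countable by hypothesis, with the same induced topology. For $\alpha\in\omega_1$ we have $\vert\alpha+1\vert\le\omega$ and $\sup_{\beta\le\alpha}w\big(\delta^{-1}(\beta)\big)\le\omega$, so the inequality yields $w\big(P^*(T_\alpha)\big)\le\omega$, i.e.\ $P^*(T_\alpha)$ is second countable.

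I expect the only genuinely delicate points to be the two cardinal estimates of the second paragraph, particularly the one-point-compactification bound, which needs the local-compactness argument above rather than a soft appeal, and the routine but necessary verification in the third paragraph that the subspace topology on $P^*(T_\alpha)$ is the compatible topology whose fibers are the prescribed second-countable ones. Neither step is deep, but both must be carried out carefully for the cardinal arithmetic to close up exactly to the stated maximum.
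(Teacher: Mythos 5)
Your proof is correct and is essentially the paper's intended argument: the corollary carries no written proof precisely because it is immediate from Proposition~\ref{thm:maquininha} --- the topology of $P^*(T)$ is the one induced by the continuous injection \eqref{eq:todasgalpha} --- combined with the standard weight estimates for products and for one-point compactifications of locally compact spaces, and the second assertion is obtained, exactly as you do, by applying the bound to the $(\alpha+1)$-graded tree $\big(T_\alpha,\delta\vert_{T_\alpha}\big)$ with the compatible subspace topology. (Your closing finite-case digression is superfluous under the usual convention, tacitly in force in the paper, that weight is an infinite cardinal.)
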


\end{section}

\begin{section}{The counterexample to the conjecture}\label{sec:counterexample}

Combining Theorem~\ref{thm:propValdiviatree} with Proposition~\ref{thm:maquininha} and Corollary~\ref{thm:limitapeso}, we obtain the following strategy for constructing a small Valdivia compact space: take an $\omega_1$-graded tree $(T,\delta)$ and locally compact Hausdorff second countable topologies
on the sets $\delta^{-1}(\alpha)$ such that $\delta^{-1}(0)$ is compact and the connecting maps of $(T,\delta)$ are continuous. These topologies are then combined into a compatible topology on $P^*(T)$, which is a small Valdivia compact space. Moreover, every small Valdivia compact space is of this form.
The purpose of this section is to use this strategy to prove the following result.
\begin{teo}\label{thm:eba}
Assume $\diamondsuit$. There exists a Valdivia compact space $K$ such that:
\begin{itemize}
\item[(a)] $w(K)=\omega_1$;
\item[(b)] $K$ has ccc;
\item[(c)] $K$ has no $G_\delta$ points;
\item[(d)] $K$ does not have a nontrivial convergent sequence in the complement of a dense $\Sigma$-subset.
\end{itemize}
\end{teo}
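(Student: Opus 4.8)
The plan is to realize the strategy recorded just before the statement: we build, by transfinite recursion on $\alpha<\omega_1$, an $\omega_1$-graded tree $(T,\delta)$ whose underlying set is $\omega_1$, together with locally compact second countable topologies $\tau_\alpha$ on the levels $\delta^{-1}(\alpha)$ (with $\tau_0$ compact) and continuous connecting maps, so that Proposition~\ref{thm:maquininha} assembles them into a compatible topology on $P^*(T)$. By Theorem~\ref{thm:propValdiviatree} the resulting $K=P^*(T)$ is a Valdivia compactum whose canonical dense $\Sigma$-subset is $S=\{A\in P^*(T):|A|\le\omega\}$, the set of countable paths, and Corollary~\ref{thm:limitapeso} gives $w(K)\le\omega_1$. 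Conditions (a) and (c) are cheap. By Corollary~\ref{thm:KGdeltapoint} (valid since every countable subset of $\omega_1$ is bounded), $K$ has no $G_\delta$ point iff every path $A$ of every $T_\alpha$ admits a proper extension using nodes of level $>\alpha$; I would guarantee this once and for all by keeping the tree growing above every node and ensuring every countable chain has an upper bound, which simultaneously produces cofinal branches, so that $K\setminus S\ne\emptyset$. Granting (c), condition (a) is immediate: a nonempty compact metrizable space has $G_\delta$ points, so $w(K)\le\omega$ is impossible and $w(K)=\omega_1$.

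The substance is (b) and (d), which I would first translate into tree language. Since (c) holds, Lemma~\ref{thm:ValdiviaGdeltaSigma} shows that $S$ has empty interior, so Corollary~\ref{thm:Kccc} applies and reduces (b) to the nonexistence of an uncountable family of pairwise disjoint cylinders $\rho_{\alpha+1}^{-1}[U_\alpha]$ with $U_\alpha$ a nonempty open subset of the new nodes $\mathfrak p[\delta^{-1}(\alpha+1)]$. For (d), the points of $K\setminus S$ are exactly the cofinal branches, and using the maps $g_\alpha$ of Proposition~\ref{thm:maquininha} a sequence of distinct branches $B_k$ converges to a branch $B$ precisely when, for every $\alpha$, the level-$\alpha$ nodes $g_\alpha(B_k)$ converge to $g_\alpha(B)$ in $\delta^{-1}(\alpha)\cupdot\{\infty\}$. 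There are two ways this can happen: the $B_k$ can split off $B$ at levels tending to $\omega_1$, or they can approach $B$ inside some level through a nontrivial $\tau_\alpha$. The design goal is to arrange the branching pattern and the level topologies so that neither mechanism is available for any branch $B$.

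With the translations in hand I would run the recursion against a $\diamondsuit$-sequence on $\omega_1$. At stage $\alpha$ the guess is used to anticipate a potential counterexample to (b) or to (d) — a countable approximation to a would-be uncountable disjoint family of cylinders, or to a would-be convergent sequence of branches together with its limit — all of which are coded by subsets of the part of $T$ constructed so far. Whenever $\diamondsuit$ guesses such an object correctly, I extend $T_\alpha$ to $T_{\alpha+1}$ and choose $\tau_{\alpha+1}$ and the connecting maps $g_{\beta,\alpha+1}$ so as to destroy it: for (d), the next level is built to pull a guessed convergent sequence away from its limit, either by forcing the approximating branches to have already left $B$ below a fixed level, or by making $g_{\alpha+1}(B)$ isolated in $\tau_{\alpha+1}$ from the relevant $g_{\alpha+1}(B_k)$; for (b), the next level is built so that the cylinders of a guessed disjoint family are forced to overlap. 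Throughout I keep the invariants that each $\tau_\alpha$ is locally compact second countable, that the connecting maps stay continuous, and that the tree keeps growing with bounded countable chains, so that the continuous inverse sequence has the right limit behaviour and Theorem~\ref{thm:propValdiviatree} applies at the end.

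I expect the main obstacle to be reconciling (b) with (d). Enforcing ccc presses the level topologies to be coarse, with genuine limit points that let cylinders merge; but limit points in the levels are exactly what allow a sequence of branches to converge to a branch within a single level, which is what (d) forbids. The heart of the argument is therefore a simultaneous bookkeeping in which each $\diamondsuit$-driven step separates a guessed convergent sequence without creating, at that level or at later limit levels, a new uncountable family of pairwise disjoint cylinders, and conversely. Making the inductive hypotheses strong enough to survive the limit stages, where continuity of the inverse sequence (upper bounds for countable chains) and the ccc property must both be preserved in the passage to the limit, is where the construction becomes technically elaborate and where $\diamondsuit$, rather than mere CH, appears to be needed.
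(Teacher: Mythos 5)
Your framework and the preliminary reductions are correct and match the paper's (Theorem~\ref{thm:propValdiviatree}, Proposition~\ref{thm:maquininha}, Corollaries~\ref{thm:limitapeso}, \ref{thm:KGdeltapoint}, \ref{thm:Kccc} and Lemma~\ref{thm:ValdiviaGdeltaSigma}), but the core of your plan for (b) and (d) has a genuine gap. First, the $\diamondsuit$-step you describe for (b) is incoherent: once two cylinders $\rho_{\alpha+1}^{-1}[U]$ and $\rho_{\beta+1}^{-1}[V]$ are disjoint (equivalently, once $\mathfrak p^{-1}[U]\cup\mathfrak p^{-1}[V]$ is an antichain), no later extension of the tree can ``force them to overlap''; disjointness of such cylinders depends only on the already-built levels. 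The usable move, as in the classical Suslin-tree construction, is the opposite one: guess a \emph{maximal antichain} and seal it, i.e., put every later node above some element of the guess, so that the antichain stays maximal and hence is countable. Second, and more seriously, your plan for (d) asks $\diamondsuit$ to guess ``a would-be convergent sequence of branches together with its limit,'' but these are uncountable objects: a $\diamondsuit$-sequence only guesses subsets of the countable trees $T_\alpha$, and the trace on $T_\alpha$ of a convergent sequence of uncountable paths does not determine it, so there is nothing concrete to diagonalize against, and ``pulling the sequence away from its limit'' at stage $\alpha+1$ is not a well-defined task. You correctly identify the tension between (b) and (d) as the heart of the matter, but you leave it unresolved.

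The paper resolves that tension with a structural idea your proposal is missing. Take every level $\delta^{-1}(\alpha)$ homeomorphic to the ordinal $[0,\omega]$, so each level has exactly one nonisolated point, and let $Z$ be the set of level-isolated points, arranged to be an initial subtree. Then (Lemma~\ref{thm:uncountablepath}) \emph{any} nontrivial convergent sequence in the complement of the dense $\Sigma$-subset produces an uncountable path inside $Z$: if $A_n\to A$ with $A$ uncountable and $\rho_{\alpha_0}(A_n)\ne\rho_{\alpha_0}(A)$ for all $n$, then continuity of $g_\alpha$ forces $g_\alpha(A)$ to be the nonisolated point of its level and $g_\alpha(A_n)\in Z$ for large $n$, and a pigeonhole over uncountably many levels places one single $A_n$ inside $Z$ cofinally. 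Meanwhile ccc of $K$ reduces (Corollary~\ref{thm:corccctree}) to ccc of $Z$, and arranging that every countable path acquires uncountably many immediate successors \emph{outside} $Z$ simultaneously kills $G_\delta$ points and makes $Z$ ever-branching, so ccc of $Z$ already implies that all paths of $Z$ are countable. Thus one single object --- $Z$ a Suslin tree --- delivers (b) and (d) at once, and $\diamondsuit$ is used only for its classical purpose of building $Z$, with the extra wrinkle (the paper's notion of \emph{special} antichain, Lemmas~\ref{thm:lemmaclub}--\ref{thm:B}) that antichains must be sealed while preserving continuity of the connecting maps $g_{\beta,\alpha+1}$, which is where the real technical work lies. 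Without this (or an equivalent) reduction, your recursion has no well-defined tasks to perform at successor stages, so the proposal does not yet constitute a proof.
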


\begin{rem}\label{thm:indepSigma}
We observe that property (d) in the statement of Theorem~\ref{thm:eba} is independent of the choice of the dense $\Sigma$-subset: more precisely, if $K$ is a Valdivia compact space and if $K\setminus S$ contains a nontrivial convergent sequence for some dense $\Sigma$-subset $S$ of $K$, then $K\setminus S$ contains a nontrivial convergent sequence for {\em any\/} dense $\Sigma$-subset $S$ of $K$ (see \cite[Remark~4.5]{ExtCKc0}).
\end{rem}

\smallskip

We start by investigating conditions on the $\omega_1$-graded tree $(T,\delta)$ and on the topologies of the sets $\delta^{-1}(\alpha)$ that imply
conditions (a)---(d) of Theorem~\ref{thm:eba}.
\begin{lem}\label{thm:PTGdeltapoint}
Let $(T,\delta)$ be an $\omega_1$-graded tree and let $P^*(T)$ be endowed with a compatible topology such that $P^*(T_\alpha)$ is metrizable, for all $\alpha\in\omega_1$. Then $P^*(T)$ has no $G_\delta$ points if and only if $\delta[N_A]$ is uncountable for every countable path $A\in P^*(T)$.
\end{lem}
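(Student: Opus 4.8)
The plan is to reduce the statement to Corollary~\ref{thm:KGdeltapoint} and then translate the resulting condition on fibers into a purely combinatorial condition on immediate successors. First I would note that, by Proposition~\ref{thm:KthetaTdelta}, $\big(P^*(T),(\rho_\alpha)_{\alpha\in\omega_1}\big)$ is an inverse limit of the continuous inverse sequence $\mathcal K_{\omega_1}(T,\delta)$, whose terms $P^*(T_\alpha)$ are compact metric by hypothesis. Since $\omega_1$ is regular, every countable subset of $\omega_1$ is bounded, so Corollary~\ref{thm:KGdeltapoint} applies and yields that $P^*(T)$ has a $G_\delta$ point if and only if there exist $\alpha\in\omega_1$ and $B\in P^*(T_\alpha)$ with $\bigl|\rho_\alpha^{-1}(B)\bigr|=1$. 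It therefore suffices to prove that such a pair $(\alpha,B)$ exists precisely when some countable path $A$ has $\delta[N_A]$ countable, which is the contrapositive of the asserted equivalence.

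The core of the argument is to describe the fiber $\rho_\alpha^{-1}(B)=\big\{A\in P^*(T):A\cap T_\alpha=B\big\}$. The point $B$ itself always lies in this fiber, and any $A$ in it satisfies $B=A\cap T_\alpha\subset A$. The key claim is that $\bigl|\rho_\alpha^{-1}(B)\bigr|=1$ if and only if no element of $N_B$ has grade strictly greater than $\alpha$, i.e.\ $\delta[N_B]\subset[0,\alpha]$. For the nontrivial implication, given $A\supsetneq B$ in the fiber, every element of $A\setminus B$ has grade $>\alpha$ (otherwise it would lie in $A\cap T_\alpha=B$); so I would pick the element $c\in A\setminus B$ of least grade, which exists since the ordinals are well-ordered and $\delta$ is injective on the chain $A$. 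As $A$ is an initial part, $\left]\cdot,c\right[\subset A$; every $s\in A$ with $s<c$ has $\delta(s)<\delta(c)$, hence $s\in B$, while every $b\in B$ has $\delta(b)\le\alpha<\delta(c)$ and thus $b<c$ in the chain $A$. This gives $\left]\cdot,c\right[=B$, so $c\in N_B$ with $\delta(c)>\alpha$. Conversely, if $t\in N_B$ has $\delta(t)>\alpha$, then $\mathfrak p(t)=B\cup\{t\}$ is a path in the fiber distinct from $B$, so the fiber is not a singleton.

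Finally I would assemble the equivalence using two cardinality observations grounded in the regularity of $\omega_1$. On one hand, any $B\in P^*(T_\alpha)$ with $\alpha<\omega_1$ is countable, since $\delta$ is strictly increasing on the chain $B$ and maps it injectively into the countable set $[0,\alpha]$; hence if $\delta[N_B]\subset[0,\alpha]$, then $\delta[N_B]$ is countable and $B$ is the desired countable path with $\delta[N_B]$ countable. On the other hand, given a countable path $A$ with $\delta[N_A]$ countable, both $\delta[A]$ and $\delta[N_A]$ are countable subsets of $\omega_1$, so I may choose $\alpha<\omega_1$ with $\alpha\ge\sup\delta[A]$ and $\alpha\ge\sup\delta[N_A]$; then $A\in P^*(T_\alpha)$ and $\delta[N_A]\subset[0,\alpha]$, so $(\alpha,A)$ witnesses the singleton condition by the claim above. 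Combining the two directions shows that $P^*(T)$ has a $G_\delta$ point if and only if some countable path $A$ has $\delta[N_A]$ countable.

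I expect the fiber analysis of the second paragraph to be the main obstacle: the delicate part is ruling out extensions of $B$ that ascend into the tree through intermediate elements of grade $>\alpha$ without ever passing through an immediate successor of $B$. The well-ordering of the ordinals, together with the injectivity of $\delta$ on chains, is exactly what tames this, since the least new element of any proper extension of $B$ is forced to be an immediate successor of $B$. The reduction via Corollary~\ref{thm:KGdeltapoint} and the closing cardinality bookkeeping should then be routine.
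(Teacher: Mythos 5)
Your proof is correct and takes essentially the same route as the paper: reduce via Proposition~\ref{thm:KthetaTdelta} to Corollary~\ref{thm:KGdeltapoint}, and use the key observation that $\vert\rho_\alpha^{-1}(B)\vert=1$ if and only if $\delta[N_B]\subset[0,\alpha]$, which the paper states without proof and you verify in full. Your fiber analysis (the least-grade element of $A\setminus B$ must be an immediate successor of $B$) and the closing cardinality bookkeeping via the regularity of $\omega_1$ are both sound.
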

\begin{proof}
Follows from Corollary~\ref{thm:KGdeltapoint} and Proposition~\ref{thm:KthetaTdelta}, keeping in mind that, for $\alpha\in\omega_1$ and $A\in P^*(T_\alpha)$, we have $\vert\rho_\alpha^{-1}(A)\vert=1$ if and only if $\delta[N_A]$ is contained in $[0,\alpha]$.
\end{proof}

In what follows, whenever $(T,\delta)$ is a graded tree and $P^*(T)$ is endowed with a compatible topology, we will consider the sets
$\delta^{-1}(\alpha)$ endowed with the topology induced by $\mathfrak p$.
\begin{lem}
Let $(T,\delta)$ be an $\omega_1$-graded tree and let $P^*(T)$ be endowed with a compatible topology such that $P^*(T_\alpha)$ is metrizable, for all $\alpha\in\omega_1$. Assume that \eqref{eq:sigmasubsettree} has empty interior in $P^*(T)$. Then $P^*(T)$ has ccc if and only if for every antichain $X\subset T$, the set:
\begin{equation}\label{eq:alphawithinterior}
\big\{\alpha\in\mathbb S:\text{$X\cap\delta^{-1}(\alpha)$ has nonempty interior in $\delta^{-1}(\alpha)$}\big\}
\end{equation}
is countable.
\end{lem}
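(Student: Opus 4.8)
The plan is to translate the failure-of-ccc criterion of Corollary~\ref{thm:Kccc} into the language of the tree $(T,\delta)$ through Proposition~\ref{thm:KthetaTdelta}, and then to recognize the resulting condition on families of open sets as precisely the existence of an antichain making \eqref{eq:alphawithinterior} uncountable.

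First I would set up the dictionary. By Proposition~\ref{thm:KthetaTdelta}, $\big(P^*(T),(\rho_\alpha)_{\alpha\in\omega_1}\big)$ is an inverse limit of the continuous inverse sequence $\mathcal K_{\omega_1}(T,\delta)$, whose spaces $K_\alpha=P^*(T_\alpha)$ are compact (being closed in $P^*(T)$) and metrizable by hypothesis, and whose right inverse is given by the inclusion maps; thus $r_\alpha=\rho_\alpha$ and $\sigma_{\alpha,\alpha+1}[K_\alpha]=P^*(T_\alpha)$. By \eqref{eq:passo} we have $K_{\alpha+1}\setminus\sigma_{\alpha,\alpha+1}[K_\alpha]=\mathfrak p[\delta^{-1}(\alpha+1)]$, while $\bigcup_{\alpha}\sigma_\alpha[K_\alpha]$ equals \eqref{eq:sigmasubsettree}, which has empty interior by assumption. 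Hence Corollary~\ref{thm:Kccc} applies and states that $P^*(T)$ fails ccc iff there are an uncountable $\Lambda\subset\omega_1$ and, for each $\alpha\in\Lambda$, a nonempty open $U_\alpha\subset\mathfrak p[\delta^{-1}(\alpha+1)]$ such that the sets $\rho_{\alpha+1}^{-1}[U_\alpha]$ are pairwise disjoint. Since $\mathfrak p$ restricts to a homeomorphism of $\delta^{-1}(\alpha+1)$ onto $\mathfrak p[\delta^{-1}(\alpha+1)]$, I may write $U_\alpha=\mathfrak p[V_{\alpha+1}]$ with $V_{\alpha+1}$ a nonempty open subset of the level $\delta^{-1}(\alpha+1)$, and the task reduces to understanding disjointness of the sets $\rho_\beta^{-1}[\mathfrak p[V_\beta]]$ combinatorially.

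The heart of the argument, and the step requiring the most careful bookkeeping with the path structure, is the identity $\rho_\beta^{-1}[\mathfrak p[V]]=\{A\in P^*(T):A\cap V\ne\emptyset\}$ for a successor level $\beta$ and $V\subset\delta^{-1}(\beta)$. Indeed, $\rho_\beta(A)=A\cap T_\beta=\mathfrak p(t)$ for some $t\in V$ holds exactly when $t\in A$ with $\delta(t)=\beta$: the forward implication takes $t=\max(A\cap T_\beta)$, and for the converse, since $A$ is an initial part one has $\mathfrak p(t)\subset A$, while $A$ being a chain along which $\delta$ strictly increases forces every $s\in A$ with $\delta(s)\le\beta$ to satisfy $s\le t$, so $A\cap T_\beta=\mathfrak p(t)$. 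As $A$ is a chain it meets $V\subset\delta^{-1}(\beta)$ in at most one point, giving the stated identity. From this, for distinct successor levels $\beta\ne\beta'$ the sets $\rho_\beta^{-1}[\mathfrak p[V_\beta]]$ and $\rho_{\beta'}^{-1}[\mathfrak p[V_{\beta'}]]$ meet iff some path contains a point of $V_\beta$ and a point of $V_{\beta'}$, which (paths being chains, and $\mathfrak p(s)$ being a path for every $s$) occurs iff some $t\in V_\beta$ is comparable to some $t'\in V_{\beta'}$. Since distinct points of a single level are automatically incomparable, I conclude that the family $\big(\rho_\beta^{-1}[\mathfrak p[V_\beta]]\big)_{\beta}$ is pairwise disjoint iff $\bigcup_\beta V_\beta$ is an antichain of $T$.

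Finally I would assemble the two directions by contraposition. If $P^*(T)$ fails ccc, the sets $V_{\alpha+1}$ above give, by the computation, an antichain $X=\bigcup_{\alpha\in\Lambda}V_{\alpha+1}$ with $X\cap\delta^{-1}(\alpha+1)=V_{\alpha+1}$ nonempty and open for each $\alpha\in\Lambda$, so \eqref{eq:alphawithinterior} is uncountable. Conversely, suppose $X$ is an antichain for which \eqref{eq:alphawithinterior} is uncountable; for each $\alpha$ in that uncountable set let $V_\alpha$ be the interior of $X\cap\delta^{-1}(\alpha)$ in $\delta^{-1}(\alpha)$, a nonempty open set contained in $X$. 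Then $\bigcup_\alpha V_\alpha\subset X$ is an antichain, so by the computation the sets $\rho_\alpha^{-1}[\mathfrak p[V_\alpha]]$ are pairwise disjoint; since every element of $\mathbb S$ below $\omega_1$ is of the form $\alpha+1$ with $\alpha<\omega_1$, reindexing and invoking Corollary~\ref{thm:Kccc} shows that $P^*(T)$ fails ccc. Combining the two contrapositives yields the stated equivalence.
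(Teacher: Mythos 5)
Your proof is correct and takes essentially the same route as the paper's: both translate Corollary~\ref{thm:Kccc} through Proposition~\ref{thm:KthetaTdelta} into the existence of an uncountable family of pairwise disjoint sets of the form $\rho_\alpha^{-1}\bigl[\mathfrak p[U_\alpha]\bigr]$ with $U_\alpha$ nonempty open in $\delta^{-1}(\alpha)$, and then identify pairwise disjointness of these sets with $\bigcup_\alpha U_\alpha$ being an antichain of $T$. The only difference is cosmetic: you verify in full (via the identity $\rho_\beta^{-1}\bigl[\mathfrak p[V]\bigr]=\bigl\{A\in P^*(T):A\cap V\ne\emptyset\bigr\}$) the disjointness--antichain equivalence that the paper states as a one-line observation.
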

\begin{proof}
By Corollary~\ref{thm:Kccc} and Proposition~\ref{thm:KthetaTdelta}, $P^*(T)$ does not have ccc if and only if there exist an uncountable subset $\Lambda$ of $\mathbb S\cap\omega_1$ and a family $(U_\alpha)_{\alpha\in\Lambda}$, with each $U_\alpha$ a nonempty open subset of $\delta^{-1}(\alpha)$, such that
the sets $\rho_\alpha^{-1}\big[\mathfrak p[U_\alpha]\big]$, $\alpha\in\Lambda$, are pairwise disjoint. The conclusion follows by observing
that these sets are pairwise disjoint if and only if $X=\bigcup_{\alpha\in\Lambda}U_\alpha$ is an antichain of $T$.
\end{proof}

\begin{cor}\label{thm:corccctree}
Let $(T,\delta)$ be an $\omega_1$-graded tree and let $P^*(T)$ be endowed with a compatible topology such that $P^*(T_\alpha)$ is metrizable, for all $\alpha\in\omega_1$. Assume that \eqref{eq:sigmasubsettree} has empty interior in $P^*(T)$. Set:
\begin{equation}\label{eq:D}
Z=\big\{t\in T:\text{$t$ is an isolated point of $\delta^{-1}\big(\delta(t)\big)$}\big\}.
\end{equation}
If $Z\cap\delta^{-1}(\alpha)$ is dense in $\delta^{-1}(\alpha)$, for all $\alpha\in\mathbb S\cup\{0\}$, then $P^*(T)$ has ccc if and only
if $Z$ has ccc.
\end{cor}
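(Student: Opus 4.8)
The plan is to reduce both implications to the previous lemma, which characterizes the ccc of $P^*(T)$ by the requirement that, for every antichain $X\subset T$, the set \eqref{eq:alphawithinterior} of successor grades $\alpha$ for which $X\cap\delta^{-1}(\alpha)$ has nonempty interior in $\delta^{-1}(\alpha)$ be countable. Before doing so, I would record one structural observation that gets used repeatedly: for each $\alpha\in\omega_1$ the space $\delta^{-1}(\alpha)$ is second countable. Indeed, $\mathfrak p$ identifies $\delta^{-1}(0)$ with $P^*(T_0)$ and, for $\alpha\in\mathbb S$, with the open subset $\mathfrak p[\delta^{-1}(\alpha)]$ of $P^*(T_\alpha)$; since each $P^*(T_\alpha)$ is compact and metrizable, hence second countable, so is every subspace. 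In particular each $\delta^{-1}(\alpha)$ has at most countably many isolated points, so $Z\cap\delta^{-1}(\alpha)$ is countable for every $\alpha\in\mathbb S\cup\{0\}$.

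For the implication that $Z$ ccc implies $P^*(T)$ ccc, I would fix an antichain $X\subset T$ and show that \eqref{eq:alphawithinterior} is countable. For each $\alpha$ in that set the interior of $X\cap\delta^{-1}(\alpha)$ in $\delta^{-1}(\alpha)$ is a nonempty open set contained in $X$; using the density of $Z\cap\delta^{-1}(\alpha)$ I would pick a point $t_\alpha\in Z$ lying in this interior, so that $t_\alpha\in X\cap Z$ and $\delta(t_\alpha)=\alpha$. Since the $t_\alpha$ have pairwise distinct grades they are distinct, and being a subset of the antichain $X$ they form an antichain contained in $Z$; as $Z$ has ccc, $\{t_\alpha:\alpha\in\eqref{eq:alphawithinterior}\}$ is countable and hence so is \eqref{eq:alphawithinterior}.

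For the converse, assume $P^*(T)$ has ccc and let $Y\subset Z$ be an antichain; I would prove $Y$ is countable by bounding separately the set of grades occurring in $Y$ and the number of elements at each grade. If $t\in Y$ has $\delta(t)\in\mathbb S$, then $t$ is isolated in $\delta^{-1}\big(\delta(t)\big)$, so $\{t\}\subset Y\cap\delta^{-1}\big(\delta(t)\big)$ witnesses that $\delta(t)$ belongs to \eqref{eq:alphawithinterior}; the previous lemma then makes $\big\{\delta(t):t\in Y,\ \delta(t)\in\mathbb S\big\}$ countable, and adjoining the single grade $0$ shows $\delta[Y]$ is countable. Finally $Y\cap\delta^{-1}(\alpha)\subset Z\cap\delta^{-1}(\alpha)$ is countable for each $\alpha$ by the observation above, so $Y=\bigcup_{\alpha\in\delta[Y]}\big(Y\cap\delta^{-1}(\alpha)\big)$ is countable.

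The routine facts here are the density-plus-antichain extraction in the first direction and the level-by-level counting in the second. The one place demanding care is the role of grade $0$: the characterizing lemma only controls successor grades, so I must use that $\delta^{-1}(0)$ is a single compact metrizable level to keep both $\delta[Y]\cap\{0\}$ and $Z\cap\delta^{-1}(0)$ under control. The second-countability observation is what ultimately prevents any individual level from contributing uncountably many isolated points, and is therefore the genuine engine behind the converse.
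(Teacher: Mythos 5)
Your proof is correct and takes essentially the same route as the paper's: the paper's one-line proof observes that the set \eqref{eq:alphawithinterior} equals $\delta[X\cap Z]\setminus\{0\}$ (your density extraction gives one inclusion, your isolated-point-singleton observation the other) and that $Z\cap\delta^{-1}(\alpha)$ is countable for all $\alpha$ (your second-countability argument), then feeds both into the preceding lemma exactly as you do. Nothing further is needed.
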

\begin{proof}
Note that the set \eqref{eq:alphawithinterior} is equal to $\delta[X\cap Z]\setminus\{0\}$ and that $Z\cap\delta^{-1}(\alpha)$ is countable,
for all $\alpha$.
\end{proof}

\begin{lem}\label{thm:uncountablepath}
Let $(T,\delta)$ be an $\omega_1$-graded tree and let $P^*(T)$ be endowed with a compatible topology such that $P^*(T_\alpha)$ is metrizable, for all $\alpha\in\omega_1$. Assume that $\delta^{-1}(\alpha)$ has at most one nonisolated point, for all $\alpha\in\mathbb S\cup\{0\}$, and that
the set $Z$ defined in \eqref{eq:D} is an initial part of $T$. If $P^*(T)$ has a nontrivial convergent sequence outside a dense $\Sigma$-subset, then
$Z$ contains an uncountable path.
\end{lem}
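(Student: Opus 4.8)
The plan is to run the whole argument through the concrete picture of the topology supplied by Proposition~\ref{thm:maquininha}: the topology of $P^*(T)$ is induced by the maps $g_\alpha$, so a sequence converges in $P^*(T)$ precisely when it converges coordinatewise, where $g_\alpha(A)$ is the unique point of $A$ at level $\alpha$ (or $\infty$ if $A$ misses that level). By Theorem~\ref{thm:propValdiviatree} I may take the dense $\Sigma$-subset to be \eqref{eq:sigmasubsettree}, the set of countable paths, and by Remark~\ref{thm:indepSigma} this choice is harmless. Thus the hypothesis provides pairwise distinct uncountable paths $A_n$ converging to $A$, with each $A_n\neq A$ and with $A$ itself an uncountable path (the limit lies in the complement of \eqref{eq:sigmasubsettree}).

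Before the core argument I would record what the two standing assumptions buy. Since $\delta^{-1}(\alpha)$ has at most one nonisolated point, its isolated points are exactly $Z\cap\delta^{-1}(\alpha)$, while the (at most one) remaining point $p_\alpha$ is the unique element of $N:=T\setminus Z$ at level $\alpha$. Because $Z$ is an initial part (so $N$ is upward closed), every path meets $Z$ in an initial segment and $N$ in a final segment; consequently, if any one of $A,A_1,A_2,\dots$ is contained in $Z$ then it is already an uncountable path in $Z$ and we are done. Otherwise each $A_n$ has a least $N$-level $\mu_n<\omega_1$, and $A_n\cap Z$ is a countable initial segment. Finally, any proper initial part of a path is bounded below $\omega_1$, hence countable; as the uncountable $A$ cannot be comparable to any $A_n$, each $C_n:=A_n\cap A$ is a proper initial part of $A$, hence countable, so $\bigcup_n\delta[C_n]$ is bounded by some $\gamma^*<\omega_1$.

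Now I would argue by contradiction, assuming $Z$ has no uncountable path. Then $A$ and every $A_n$ have uncountable $N$-parts, so $L:=\delta[A\cap N]$ is uncountable and $\mu^*:=\sup_n\mu_n<\omega_1$, being a countable supremum of countable ordinals. Pick $\alpha\in L$ with $\alpha>\max\{\gamma^*,\mu^*\}$. Then $g_\alpha(A)=p_\alpha$ is the nonisolated point, and since $\alpha>\gamma^*$ we have $p_\alpha\notin C_n$, i.e.\ $p_\alpha\notin A_n$, for every $n$. Coordinatewise convergence gives $g_\alpha(A_n)\to p_\alpha\neq\infty$, so for large $n$ the value $g_\alpha(A_n)$ lies in $\delta^{-1}(\alpha)\setminus\{p_\alpha\}=Z\cap\delta^{-1}(\alpha)$; that is, $A_n$ passes through an isolated point at level $\alpha$. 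Since $N$ is upward closed, $A_n$ can then have no $N$-point at or below level $\alpha$, forcing $\mu_n>\alpha>\mu^*$ and contradicting $\mu_n\le\mu^*$. Hence the assumption fails and $Z$ contains an uncountable path.

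I expect the main obstacle to be conceptual rather than computational: one must recognize that the sought branch of $Z$ is \emph{not} carried by the limit path $A$ (indeed the argument forces $A\cap Z$ to be countable), but rather surfaces in the approximating paths $A_n$. The driving mechanism is that convergence at the unique nonisolated points $p_\alpha$ over the uncountably many levels $\alpha\in L$ pushes the least $N$-levels $\mu_n$ arbitrarily high, which a single $\omega$-indexed sequence cannot sustain; pinning down this tension — and correctly translating ``$A_n$ is isolated at level $\alpha$'' into ``$\mu_n>\alpha$'' via the fact that $N$ is upward closed — is the delicate part.
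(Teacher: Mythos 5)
Your proof is correct, and it takes a genuinely different route from the paper's. Both arguments run on the same engine: continuity of the coordinate maps $g_\alpha$ forces the approximating paths, at a sufficiently high level $\alpha$ where $g_\alpha(A)$ is the unique nonisolated point, eventually through $Z\cap\delta^{-1}(\alpha)$. But the paper argues \emph{directly}: it fixes $\alpha_0$ with $\rho_{\alpha_0}(A_n)\ne\rho_{\alpha_0}(A)$ for all $n$ (your $\gamma^*$ serves the same purpose), shows that for every $\alpha\in\delta[A]$ with $\alpha\ge\alpha_0$ there is a threshold $n(\alpha)$ beyond which $A_n\cap Z\cap\delta^{-1}(\alpha)\ne\emptyset$, and then pigeonholes $\alpha\mapsto n(\alpha)$ over the uncountably many levels of $A$ to exhibit one fixed $A_n$ meeting $Z$ at uncountably many levels, whence $A_n$ itself is the desired uncountable path in $Z$. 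You instead argue by contradiction via the least $N$-levels $\mu_n$ and their countable supremum $\mu^*$, getting the contradiction at a single well-chosen level. The trade-offs: the paper's version is constructive (it names the branch) and needs strictly less --- only the limit $A$ must lie in the complement of \eqref{eq:sigmasubsettree}, since nonisolatedness of $g_\alpha(A)$ is extracted from $g_\alpha(A_n)\to g_\alpha(A)$ with $g_\alpha(A_n)\ne g_\alpha(A)$, not from $\alpha\in\delta[A\cap N]$; relatedly, the paper does not need your contradiction hypothesis to know that $A$ passes through nonisolated points at uncountably many levels. Your argument uses the full reading of the hypothesis (the terms $A_n$ are uncountable paths) in two places: to dismiss the case $A_n\subseteq Z$ and to ensure $A_n\cap N\ne\emptyset$ so that $\mu_n$ is defined; that reading is the intended one (cf.\ Remark~\ref{thm:indepSigma}), so this is not a gap, merely a point where the paper's proof is more economical. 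Your key step --- an isolated point of $A_n$ at level $\alpha$ plus upward-closedness of $T\setminus Z$ forces $\mu_n>\alpha$ --- is sound, as is your reduction of ``$C_n$ countable'' to the fact that paths have order type at most $\omega_1$.
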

\begin{proof}
By Remark~\ref{thm:indepSigma}, we can assume that the complement of \eqref{eq:sigmasubsettree} has a nontrivial convergent sequence.
Let then $A\in P^*(T)$ be an uncountable path which is the limit of a sequence $(A_n)_{n\in\omega}$ in $P^*(T)\setminus\{A\}$ and let $\alpha_0\in\omega_1$
be such that $\rho_{\alpha_0}(A_n)\ne\rho_{\alpha_0}(A)$, for all $n\in\omega$. We claim that, for $\alpha\in\delta[A]$ with $\alpha\ge\alpha_0$, there
exists $n(\alpha)\in\omega$ such that $A_n\cap Z\cap\delta^{-1}(\alpha)\ne\emptyset$, for all $n\ge n(\alpha)$. Namely, for such $\alpha$
we have $g_\alpha(A)\ne\infty$, so that, by the continuity of $g_\alpha$, there exists $n(\alpha)\in\omega$ with $g_\alpha(A_n)\ne\infty$, for all $n\ge n(\alpha)$. Since $g_\alpha(A_n)\ne g_\alpha(A)$ for all $n$, we obtain that $g_\alpha(A)$ is not isolated in $\delta^{-1}(\alpha)$ and thus $g_\alpha(A_n)\in Z\cap\delta^{-1}(\alpha)$,
for all $n\ge n(\alpha)$. This proves the claim. To conclude the proof, pick $n\in\omega$ such that $n=n(\alpha)$ for uncountably many
$\alpha\in\delta[A]\setminus\alpha_0$ and note that $A_n$ is an uncountable path contained in $Z$.
\end{proof}

A tree $T$ is called {\em ever-branching\/} if, for every $t\in T$, the set
\[\big\{s\in T:s>t\big\}\]
is not a chain. We recall that if $T$ is an ever-branching tree
with ccc such that $\left]\cdot,t\right[$ is countable, for all $t\in T$, then every path of $T$ is countable (see \cite[Lemma~7.4]{Kunen}).

\begin{lem}\label{thm:chega}
Let $(T,\delta)$ be a nonempty $\omega_1$-graded tree and let $P^*(T)$ be endowed with a compatible topology such that $P^*(T_\alpha)$ is metrizable, for all $\alpha\in\omega_1$. Assume that:
\begin{itemize}
\item[(i)] $N_A$ is uncountable, for every countable path $A\in P^*(T)$;
\item[(ii)] $\delta^{-1}(\alpha)$ has at most one nonisolated point, for all $\alpha\in\mathbb S\cup\{0\}$;
\item[(iii)] the set $Z$ defined in \eqref{eq:D} is an initial part of $T$ with ccc.
\end{itemize}
Then $K=P^*(T)$ is a Valdivia compact space satisfying conditions (a)---(d) in the statement of Theorem~\ref{thm:eba}.
\end{lem}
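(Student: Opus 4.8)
The goal is to verify conditions (a)–(d) of Theorem~\ref{thm:eba} for $K=P^*(T)$ under hypotheses (i)–(iii). The plan is to dispatch each condition in turn, leaning heavily on the lemmas and corollaries already established. First I would record that $K$ is a Valdivia compactum with $w(K)\le\omega_1$: this is immediate from the converse direction of Theorem~\ref{thm:propValdiviatree}, since we are given an $\omega_1$-graded tree with a compatible topology making each $P^*(T_\alpha)$ metrizable. For condition~(c), the absence of $G_\delta$ points, I would invoke Lemma~\ref{thm:PTGdeltapoint}: it suffices to check that $\delta[N_A]$ is uncountable for every countable path $A$. Hypothesis~(i) gives that $N_A$ itself is uncountable; since the levels $\delta^{-1}(\alpha)$ are the fibers of $\delta$ and each such fiber meets any antichain — in particular $N_A$, which is an antichain — only in a set controlled by the topology, I would argue that an uncountable $N_A$ forces $\delta[N_A]$ to be uncountable (each level can contribute only countably many isolated points, and at most one nonisolated point by~(ii), so a single level holds at most countably many elements of $N_A$).

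**Verifying ccc and the empty-interior hypothesis.** Condition~(b), that $K$ has ccc, I would obtain from Corollary~\ref{thm:corccctree}, whose conclusion is exactly ``$P^*(T)$ has ccc iff $Z$ has ccc''; hypothesis~(iii) supplies that $Z$ has ccc. To apply this corollary I must first check its two standing assumptions: that \eqref{eq:sigmasubsettree} has empty interior in $P^*(T)$, and that $Z\cap\delta^{-1}(\alpha)$ is dense in $\delta^{-1}(\alpha)$ for every $\alpha$. The density is where I would use~(ii): a space with at most one nonisolated point has its isolated points dense, and by the definition~\eqref{eq:D} of $Z$ these isolated points are precisely $Z\cap\delta^{-1}(\alpha)$. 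For the empty-interior claim, I would use Lemma~\ref{thm:ValdiviaGdeltaSigma}: once~(c) is established, $K$ has no $G_\delta$ points, so \emph{every} $\Sigma$-subset has empty interior, and \eqref{eq:sigmasubsettree} is a dense $\Sigma$-subset by Theorem~\ref{thm:propValdiviatree}. This ordering matters — I prove~(c) before~(b) precisely so that the empty-interior hypothesis of Corollary~\ref{thm:corccctree} comes for free.

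**Condition (d) and the main obstacle.** For~(d), that there is no nontrivial convergent sequence in the complement of a dense $\Sigma$-subset, I would argue by contraposition using Lemma~\ref{thm:uncountablepath}: if such a convergent sequence existed, then (its hypotheses being~(ii) and the fact that $Z$ is an initial part, both from~(iii)) the tree $Z$ would contain an uncountable path. The task then reduces to showing $Z$ has \emph{no} uncountable path, and this is the step I expect to be the crux. The natural route is the cited fact that an ever-branching ccc tree with countable initial segments $\left]\cdot,t\right[$ has only countable paths. I have ccc from~(iii), and countability of $\left]\cdot,t\right[$ holds because $\delta$ is injective on each chain and maps into $\omega_1$, so initial segments are order-isomorphic to countable ordinals. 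The delicate point is \emph{ever-branching}: $Z$ as given need not literally be ever-branching. I would therefore either argue that the presence of an uncountable path together with~(i) and~(iii) can be routed to a contradiction directly, or pass to a suitable ever-branching refinement of $Z$ — removing from $Z$ those points above which $Z$ eventually becomes a single chain — while checking that such a pruning preserves ccc and still contains any uncountable path of $Z$. Getting this pruning argument clean, so that the cited Lemma~7.4 of \cite{Kunen} applies to the modified tree yet still rules out uncountable paths in the original $Z$, is where the real work lies; once an uncountable path in $Z$ is excluded, Lemma~\ref{thm:uncountablepath} closes condition~(d). Finally, condition~(a), $w(K)=\omega_1$, follows by combining the bound $w(K)\le\omega_1$ with the observation that~(c) forces $K$ to be nonmetrizable, hence $w(K)\ge\omega_1$.
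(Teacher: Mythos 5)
Most of your proposal tracks the paper's proof: you establish (c) first via Lemma~\ref{thm:PTGdeltapoint} (your count of the levels is the paper's --- each $\delta^{-1}(\alpha)$ is second countable, being carried by $\mathfrak p$ into the compact metrizable $P^*(T_\alpha)$, so it has countably many isolated points and, by (ii), at most one other point, whence $N_A$ uncountable forces $\delta[N_A]$ uncountable), you then get the empty-interior hypothesis from Lemma~\ref{thm:ValdiviaGdeltaSigma} exactly as the paper does, you verify the density hypothesis of Corollary~\ref{thm:corccctree} from (ii), and you reduce (d) to showing $Z$ has no uncountable path via Lemma~\ref{thm:uncountablepath}. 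Up to that point the two arguments coincide.

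But at the step you yourself flag as the crux, there is a genuine gap: you assert that ``$Z$ as given need not literally be ever-branching'' and propose either an unspecified direct contradiction or a pruning of $Z$, leaving both unexecuted. In fact $Z$ \emph{is} ever-branching under (i)--(iii), and proving this is the one nontrivial step of the paper's proof; it is where the topology enters. Given $z\in Z$, the path $\mathfrak p(z)$ is countable (as $\delta$ is injective on chains, with values in $[0,\delta(z)]$), so $N_{\mathfrak p(z)}$ is an uncountable antichain by (i); since $Z$ has ccc by (iii), some $t\in N_{\mathfrak p(z)}$ lies outside $Z$, i.e., $t$ is nonisolated in its level. Setting $\alpha=\delta(z)$ and $\beta=\delta(t)$, we have $g_{\alpha\beta}(t)=z$; since $z$ is isolated in $\delta^{-1}(\alpha)$, the set $g_{\alpha\beta}^{-1}(z)$ is an open neighborhood of the nonisolated point $t$, hence infinite, and by (ii) all of $g_{\alpha\beta}^{-1}(z)\setminus\{t\}$ consists of isolated points, i.e., lies in $Z$; these points form an infinite antichain in $Z$ strictly above $z$. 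Note also why your pruning fallback cannot substitute for this argument: ccc and countable initial segments alone do not exclude uncountable paths (a single uncountable chain is a ccc tree with all levels and initial segments small), so a pruning that removes points above which $Z$ becomes chain-like may remove precisely the uncountable path you are trying to contradict --- in the single-chain example nothing survives and \cite[Lemma~7.4]{Kunen} yields no contradiction. The ever-branching property must be extracted from (i), (ii) and the continuity of the connecting maps, not manufactured by trimming the tree; without the displayed argument your proof of (d) does not close, while the remaining items (a)--(c), including $w(K)=\omega_1$ from nonmetrizability in the absence of $G_\delta$ points, are fine.
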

\begin{proof}
By Theorem~\ref{thm:propValdiviatree}, $K$ is a Valdivia compact space, \eqref{eq:sigmasubsettree} is a dense $\Sigma$-subset of $K$ and $w(K)\le\omega_1$. Since $\delta^{-1}(\alpha)$ is countable, for all $\alpha\in\mathbb S\cup\{0\}$, we have that $\delta[N_A]$ is uncountable, for every
countable path $A\in P^*(T)$, so that Lemma~\ref{thm:PTGdeltapoint} implies that $K$ has no $G_\delta$ points. It then follows from Lemma~\ref{thm:ValdiviaGdeltaSigma} that \eqref{eq:sigmasubsettree} has empty interior and from Corollary~\ref{thm:corccctree} that $K$ has ccc. By Lemma~\ref{thm:uncountablepath}, to conclude the proof, it suffices to show that $Z$ does not contain an uncountable path. To this aim, we show that $Z$ must be ever-branching. Let $z\in Z$
and note that, since $N_{\mathfrak p(z)}$ is an uncountable antichain, there exists $t\in T\setminus Z$ with $t\in N_{\mathfrak p(z)}$. Setting $\alpha=\delta(z)$ and $\beta=\delta(t)$, we have that $g_{\alpha\beta}(t)=z$. Since $z$ is isolated in $\delta^{-1}(\alpha)$ and $t$ is not isolated in $\delta^{-1}(\beta)$, it follows from the continuity of $g_{\alpha\beta}$ that $g_{\alpha\beta}^{-1}(z)$ is infinite; moreover,
by (ii), $g_{\alpha\beta}^{-1}(z)\setminus\{t\}$ is contained in $Z$. Hence, $g_{\alpha\beta}^{-1}(z)\setminus\{t\}$ is an infinite
antichain in $Z$ consisting of elements greater than $z$.
\end{proof}

Our goal now is to construct an $\omega_1$-graded tree $(T,\delta)$ and a compatible topology on $P^*(T)$ such that the assumptions of Lemma~\ref{thm:chega}
hold. Observe that the initial subtree $Z$ of $T$ will be a {\em Suslin tree}, i.e., $\vert Z\vert=\omega_1$, every path of $Z$ is countable and $Z$ has ccc.
Our construction is similar to the standard construction of a Suslin tree using $\diamondsuit$ (\cite[Theorem~7.8]{Kunen}), but technically much more involved, since we have to ensure the continuity of the connecting maps. The first step is to develop the technique that will be later used to prove that $Z$ has ccc.
The construction itself is the purpose of Subsection~\ref{sub:construction}.

\begin{defin}
Let $(T,\delta)$ be a graded tree. An antichain $X$ of $T$ is said to be {\em special\/} in $T$ if given a finite subset $\mathcal F$
of $\mathbb S\cup\{0\}$ and an element $t\in T$ with $\mathfrak p(t)\cap X=\emptyset$, there exists $s\in X$ with $s>t$ such that
$\delta\big[\left]t,s\right]\big]\cap\mathcal F=\emptyset$, where $\left]t,s\right]=\big\{u\in T:t<u\le s\big\}$.
\end{defin}
Obviously a special antichain is also a maximal antichain.

\begin{lem}\label{thm:lemmaclub}
Let $(T,\delta)$ be an $\omega_1$-graded tree and let $X$ be a special antichain in $T$. If $T_\alpha$ is countable for all $\alpha\in\omega_1$,
then the set:
\begin{equation}\label{eq:isclub}
\big\{\alpha\in\omega_1:\text{$X\cap T_\alpha$ is a special antichain in $T_\alpha$}\big\}
\end{equation}
is closed and unbounded (club).
\end{lem}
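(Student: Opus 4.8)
The plan is to prove separately that the set in \eqref{eq:isclub}, which I denote by $C$, is closed and that it is unbounded. For closedness I would take a limit ordinal $\lambda<\omega_1$ with $C\cap\lambda$ cofinal in $\lambda$ and verify directly that $X\cap T_\lambda$ is special in $T_\lambda$ (it is an antichain automatically, being a subset of the antichain $X$). Given a finite $\mathcal F\subset\mathbb S\cup\{0\}$ and $t\in T_\lambda$ with $\mathfrak p(t)\cap X=\emptyset$, the fact that $\delta(t)\in\mathbb S\cup\{0\}$ forces $\delta(t)<\lambda$, so I can choose $\alpha\in C$ with $\delta(t)<\alpha<\lambda$; then $t\in T_\alpha$ and, since $X\cap T_\alpha$ is special in $T_\alpha$, there is a witness $s\in X\cap T_\alpha\subset X\cap T_\lambda$ with $s>t$ and $\delta\big[\left]t,s\right]\big]\cap\mathcal F=\emptyset$. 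Here I would use that $T_\alpha$ is an initial subtree, so that $\mathfrak p(t)$ and the interval $\left]t,s\right]$ are the same whether computed in $T$ or in $T_\alpha$, and that $\mathfrak p(t)\cap(X\cap T_\alpha)=\mathfrak p(t)\cap X$.

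For unboundedness I would exploit the hypothesis that $T_\alpha$ is countable through a closure argument. The key preliminary observation is that any witness $s\in X\cap T_\alpha$ satisfies $\delta\big[\left]t,s\right]\big]\subset[0,\alpha]$, so in testing specialness of $X\cap T_\alpha$ in $T_\alpha$ one may replace $\mathcal F$ by $\mathcal F\cap[0,\alpha]$ and hence restrict attention to finite $\mathcal F\subset(\mathbb S\cup\{0\})\cap[0,\alpha]$. Because $T_\alpha$ is countable and a countable set has only countably many finite subsets, the collection $P_\alpha$ of pairs $(t,\mathcal F)$ with $t\in T_\alpha$, $\mathfrak p(t)\cap X=\emptyset$ and $\mathcal F\subset(\mathbb S\cup\{0\})\cap[0,\alpha]$ finite is countable. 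For each such pair, specialness of $X$ in $T$ supplies a witness $s\in X$; taking the supremum of the successors of the levels $\delta(s)$ of these witnesses over the countably many pairs in $P_\alpha$ yields an ordinal $g(\alpha)<\omega_1$ such that every pair in $P_\alpha$ has a witness lying in $T_{g(\alpha)}$. Replacing $g$ by $\alpha\mapsto\max\{g(\alpha),\alpha+1\}$ I may assume $g(\alpha)>\alpha$.

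Finally I would invoke the standard fact that the set $C'=\{\alpha<\omega_1:g(\beta)<\alpha\text{ for all }\beta<\alpha\}$ of closure points of $g$ is club and consists of limit ordinals, and show $C'\subset C$. Given $\alpha\in C'$, a finite $\mathcal F$, and $t\in T_\alpha$ with $\mathfrak p(t)\cap X=\emptyset$, I would set $\mathcal F'=\mathcal F\cap[0,\alpha]$ and pick $\beta<\alpha$ large enough that $t\in T_\beta$ and $\mathcal F'\subset[0,\beta]$ (possible since $\alpha$ is a limit, $\delta(t)<\alpha$, and $\mathcal F'$ is finite with $\alpha\notin\mathbb S\cup\{0\}$); then $(t,\mathcal F')\in P_\beta$ has a witness in $T_{g(\beta)}\subset T_\alpha$, which by the reduction above witnesses specialness for the original $\mathcal F$. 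Thus $X\cap T_\alpha$ is special in $T_\alpha$ and $\alpha\in C$. Since $C\supset C'$ is unbounded and $C$ is closed, $C$ is club. The one place where the hypothesis genuinely enters—and the only real obstacle—is guaranteeing $g(\alpha)<\omega_1$: this is exactly where countability of $T_\alpha$, and hence of $P_\alpha$, is indispensable, for otherwise the supremum defining $g(\alpha)$ could reach $\omega_1$.
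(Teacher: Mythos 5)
Your proof is correct and takes essentially the same route as the paper's: countability of $T_\alpha$ yields a bounding function below $\omega_1$ (your $g$, the paper's $f$), and ordinals closed under it witness unboundedness (your club $C'$ of closure points is just the paper's $\sup_{n\in\omega}f^n(\alpha)$ in different packaging). The only difference is expository: you spell out the closedness verification and the reduction of $\mathcal F$ to $\mathcal F\cap[0,\alpha]$, both of which the paper leaves implicit.
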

\begin{proof}
The set is clearly closed. To see that it is unbounded note that, given $\alpha\in\omega_1$, the fact that $T_\alpha$ is countable implies that there exists $f(\alpha)$ in $\omega_1$ with $f(\alpha)>\alpha$
having the following property: for every $t\in T_\alpha$ with $\mathfrak p(t)\cap X=\emptyset$ and every finite subset $\mathcal F$ of $[0,\alpha]$,
there exists $s\in X\cap T_{f(\alpha)}$ with $s>t$ and $\delta\big[\left]t,s\right]\big]\cap\mathcal F=\emptyset$. Hence, denoting by $f^n$ the $n$-th iterate of $f$,
we have that $\sup_{n\in\omega}f^n(\alpha)$ is in \eqref{eq:isclub}, for all $\alpha\in\omega_1$.
\end{proof}

Given a set $\Gamma$ with $\vert\Gamma\vert=\omega_1$, by a {\em continuous filtration of $\Gamma$ by countable sets\/} we mean an increasing family $(\Gamma_\alpha)_{\alpha\in\omega_1}$ of countable subsets of $\Gamma$ such that $\Gamma=\bigcup_{\alpha\in\omega_1}\Gamma_\alpha$ and $\Gamma_\alpha=\bigcup_{\beta\in\alpha}\Gamma_\beta$, for every limit ordinal $\alpha\in\omega_1$.
A {\em $\diamondsuit$-sequence\/} for this filtration is a family $(\Gamma^\diamondsuit_\alpha)_{\alpha\in\omega_1}$ such that each $\Gamma^\diamondsuit_\alpha$ is a subset of $\Gamma_\alpha$ and, for any $X\subset\Gamma$, the set $\big\{\alpha\in\omega_1:X\cap\Gamma_\alpha=\Gamma^\diamondsuit_\alpha\big\}$ is stationary. The combinatorial principle $\diamondsuit$ states that
there exists a $\diamondsuit$-sequence for the filtration $\omega_1=\bigcup_{\alpha\in\omega_1}\alpha$. It is easy to prove that $\diamondsuit$ implies
the existence of a $\diamondsuit$-sequence for any continuous filtration by countable sets $\Gamma=\bigcup_{\alpha\in\omega_1}\Gamma_\alpha$.

\begin{cor}\label{thm:temccc}
Let $(T,\delta)$ be an $\omega_1$-graded tree such that $T_\alpha$ is countable, for all $\alpha\in\omega_1$. Let $(T^\diamondsuit_\alpha)_{\alpha\in\omega_1}$ be a $\diamondsuit$-sequence for the continuous filtration $T=\bigcup_{\alpha\in\omega_1}T_\alpha$. Assume that:
\begin{itemize}
\item[(i)] for all $\alpha\in\omega_1$, if $T^\diamondsuit_\alpha$ is a special antichain in $T_\alpha$, then $\mathfrak p(t)$ intersects $T^\diamondsuit_\alpha$, for every $t\in T\setminus T_\alpha$;
\item[(ii)] every maximal antichain in $T$ is special in $T$.
\end{itemize}
Then $T$ has ccc.
\end{cor}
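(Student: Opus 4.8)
The strategy is the classical $\diamondsuit$-argument for establishing ccc via a stationary-set reflection, now adapted to the graded-tree setting through Lemma~\ref{thm:lemmaclub}. The plan is to argue by contradiction: suppose $T$ fails ccc, so it contains an uncountable antichain. By a standard maximality argument (Zorn's Lemma), this antichain extends to a maximal antichain $X$ in $T$, which is still uncountable. By hypothesis (ii), $X$ is in fact \emph{special} in $T$. The whole point of introducing special antichains is that specialness reflects down to the countable levels $T_\alpha$ in a club-many fashion, and this is exactly the content of Lemma~\ref{thm:lemmaclub}, whose hypothesis (that $T_\alpha$ is countable for all $\alpha\in\omega_1$) is granted here.

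First I would apply Lemma~\ref{thm:lemmaclub} to the special antichain $X$, obtaining that the set $C=\big\{\alpha\in\omega_1:X\cap T_\alpha \text{ is a special antichain in } T_\alpha\big\}$ is club in $\omega_1$. Next I would invoke the $\diamondsuit$-sequence $(T^\diamondsuit_\alpha)_{\alpha\in\omega_1}$ for the filtration $T=\bigcup_{\alpha\in\omega_1}T_\alpha$: applied to the subset $X\subset T$, the defining property of the $\diamondsuit$-sequence says that the set $S=\big\{\alpha\in\omega_1:X\cap T_\alpha=T^\diamondsuit_\alpha\big\}$ is stationary. Since a stationary set meets every club, I would pick $\alpha\in S\cap C$. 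For such an $\alpha$ we simultaneously have $T^\diamondsuit_\alpha=X\cap T_\alpha$ and $X\cap T_\alpha$ is a special antichain in $T_\alpha$; hence $T^\diamondsuit_\alpha$ is itself a special antichain in $T_\alpha$.

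Now hypothesis (i) applies: since $T^\diamondsuit_\alpha$ is a special antichain in $T_\alpha$, the path $\mathfrak p(t)$ meets $T^\diamondsuit_\alpha=X\cap T_\alpha$ for every $t\in T\setminus T_\alpha$. The contradiction then comes from the fact that $X$ is uncountable while $T_\alpha$ is countable, so I would fix some $t\in X\setminus T_\alpha$ (such $t$ exists because $X$ is uncountable). Applying (i) to this $t$ gives an element $s\in\mathfrak p(t)\cap X$, i.e.\ $s\le t$ with $s\in X$. Since $X$ is an antichain and $s\le t$ with $s,t\in X$, we must have $s=t$; but $s\in X\cap T_\alpha\subset T_\alpha$ while $t\notin T_\alpha$, a contradiction. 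Therefore no uncountable antichain exists and $T$ has ccc.

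The main obstacle is conceptual rather than computational: it lies in correctly orchestrating the interplay between the club set produced by reflection of specialness (Lemma~\ref{thm:lemmaclub}) and the stationary set produced by the $\diamondsuit$-guessing, so that at the chosen $\alpha$ the guessed set $T^\diamondsuit_\alpha$ genuinely \emph{is} a special antichain in $T_\alpha$, thereby triggering hypothesis (i). Once these two sets are intersected, hypothesis (i) does all the work of forcing every node above level $\alpha$ back below $X\cap T_\alpha$, which is incompatible with $X$ being an uncountable antichain. The verification that $X$ can be taken maximal and that hypothesis (ii) upgrades maximality to specialness is routine.
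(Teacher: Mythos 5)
Your proposal is correct and takes essentially the same route as the paper's proof: pass to a maximal antichain, use (ii) together with Lemma~\ref{thm:lemmaclub} to get a club of levels $\alpha$ where $X\cap T_\alpha$ is special in $T_\alpha$, intersect it with the stationary set of levels where the $\diamondsuit$-sequence guesses $X\cap T_\alpha$, and then apply (i) to force $X\subset T_\alpha$, hence countable. The paper states this in one line, but your contradiction argument is the same proof written out in detail.
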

\begin{proof}
Given a maximal antichain $X$ in $T$, from (ii) and Lemma~\ref{thm:lemmaclub} we obtain $\alpha\in\omega_1$ such that $X\cap T_\alpha$ is a special antichain in $T_\alpha$ and $X\cap T_\alpha=T^\diamondsuit_\alpha$. It then follows from (i) that $X\subset T_\alpha$.
\end{proof}

\subsection{Construction of the graded tree}\label{sub:construction} Throughout this subsection, $T$ and $Z$ are defined by:
\[T=[0,\omega]\times\big[\big(\mathbb S\cup\{0\}\big)\cap\omega_1\big],\quad Z=\omega\times\big[\big(\mathbb S\cup\{0\}\big)\cap\omega_1\big],\]
and $\delta:T\to\mathbb S\cup\{0\}$ denotes the projection onto the second coordinate. As usual, we write
$T_\alpha=\delta^{-1}\big[[0,\alpha]\big]$ and $Z_\alpha=Z\cap T_\alpha$. Moreover, for $\alpha$ in $\big(\mathbb S\cup\{0\}\big)\cap\omega_1$,
we endow $\delta^{-1}(\alpha)=[0,\omega]\times\{\alpha\}$ with the topology that makes the projection onto the first coordinate a homeomorphism,
where $[0,\omega]$ has the order topology. Clearly, equality \eqref{eq:D} holds. The hard part of the work will be the construction of the partial order of $T$.

\begin{defin}\label{thm:defadmissible}
Let $(Z^\diamondsuit_\alpha)_{\alpha\in\omega_1}$ be a $\diamondsuit$-sequence for the continuous filtration $Z=\bigcup_{\alpha\in\omega_1}Z_\alpha$.
Given $\alpha\in[0,\omega_1]$, we say that a partial order $\le$ in $T_\alpha$ is {\em admissible\/} (with respect to the given $\diamondsuit$-sequence) if it satisfies the following properties:
\begin{enumerate}
\item $(T_\alpha,{\le})$ is a tree, $\delta\vert_{T_\alpha}$ is a grading function and the connecting maps of $(T_\alpha,\delta\vert_{T_\alpha})$
are continuous;
\item $Z_\alpha$ is an initial part of $(T_\alpha,{\le})$;
\item for all $\beta<\alpha$, if $Z^\diamondsuit_\beta$ is a special antichain in $(Z_\beta,\delta\vert_{Z_\beta})$, then
$\mathfrak p(z)$ intersects $Z^\diamondsuit_\beta$, for every $z\in Z_\alpha\setminus Z_\beta$.
\end{enumerate}
\end{defin}
Note that (1) and (2) imply that $T_\beta$ and $Z_\beta$ are initial parts of $(T_\alpha,{\le})$, for all $\beta\le\alpha$; in particular,
$(T_\beta,\delta\vert_{T_\beta})$ and $(Z_\beta,\delta\vert_{Z_\beta})$ are graded trees (endowed with the restriction of $\le$).

\smallskip

In Corollary~\ref{thm:corsofaltaesse} below, we show that if $T$ is endowed with an admissible partial order satisfying a simple extra property,
then the assumptions of Lemma~\ref{thm:chega} are satisfied.
\begin{lem}
Let $\le$ be an admissible partial order in $T$ and assume that, for every $z\in Z$, the set $N_{\mathfrak p(z)}\setminus Z$ is uncountable.
Then every maximal antichain in $Z$ is special in $Z$.
\end{lem}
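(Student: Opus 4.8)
The plan is to verify the defining property of a special antichain directly, using the hypothesis that each node of $Z$ has uncountably many immediate successors lying \emph{outside} $Z$ in order to escape the finitely many forbidden levels of $\mathcal F$, and then invoking maximality of $X$ to land on $X$ itself. So I would fix a maximal antichain $X$ in $Z$, a finite set $\mathcal F\subset\mathbb S\cup\{0\}$ and a node $z\in Z$ with $\mathfrak p(z)\cap X=\emptyset$, and aim to produce $s\in X$ with $s>z$ and $\delta\big[\left]z,s\right]\big]\cap\mathcal F=\emptyset$. The case $\mathcal F=\emptyset$ is immediate from maximality, so I assume $\mathcal F\ne\emptyset$ and set $\beta_0=\max\mathcal F$.

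First I would pick a good level beyond $\mathcal F$. Since $N_{\mathfrak p(z)}\setminus Z$ is uncountable and each level $\delta^{-1}(\beta)$ contains exactly one point outside $Z$, namely $(\omega,\beta)$, the set of levels $\beta$ for which $t_\beta:=(\omega,\beta)$ is an immediate successor of $z$ is uncountable; I choose one such $\beta>\beta_0$. Because $t_\beta$ is an immediate successor of $z$, its set of predecessors is $\mathfrak p(t_\beta)=\mathfrak p(z)\cupdot\{t_\beta\}$, so the connecting maps take the values $g_{\delta(z),\beta}(t_\beta)=z$ and $g_{\gamma,\beta}(t_\beta)=\infty$ for every $\gamma$ with $\delta(z)<\gamma<\beta$ (there being no predecessor of $t_\beta$ at an intermediate level).

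The key step, which I expect to be the main obstacle, is transferring this ancestry information from the single nonisolated point $t_\beta$ to the isolated points $(m,\beta)$ nearby. Here I would use that each target space $\delta^{-1}(\gamma)$ is compact, so $\infty$ is isolated in $\delta^{-1}(\gamma)\cupdot\{\infty\}$ and the singleton $\{z\}$ is open in $\delta^{-1}(\delta(z))\cupdot\{\infty\}$. Since the connecting maps are continuous by admissibility and the tails $\{(m,\beta):m>N\}$ form a neighborhood base of $t_\beta$, continuity of $g_{\delta(z),\beta}$ and of the finitely many maps $g_{\gamma,\beta}$ with $\gamma\in\mathcal F$ and $\delta(z)<\gamma<\beta$ each forces the corresponding value to persist on a tail. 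Intersecting these cofinitely-many conditions yields an isolated point $s_0=(m,\beta)\in Z$ with $g_{\delta(z),\beta}(s_0)=z$ and $g_{\gamma,\beta}(s_0)=\infty$ for all such $\gamma$; that is, $z<s_0$ and $s_0$ has no predecessor above $z$ at any level of $\mathcal F$, so $\delta\big[\left]z,s_0\right]\big]\cap\mathcal F=\emptyset$ (the top level $\beta\notin\mathcal F$ and the choice $\beta>\beta_0$ leave nothing of $\mathcal F$ unaccounted for in $\left]\delta(z),\beta\right]$). The delicate point is precisely that continuity at the one nonisolated point must simultaneously control cofinitely many isolated points across all the finitely many relevant levels.

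Finally I would ascend to $X$ while preserving the avoidance of $\mathcal F$. Since $\mathfrak p(s_0)=\mathfrak p(z)\cup\left]z,s_0\right]$, either some $x\in X$ already lies in $\left]z,s_0\right]$, in which case $\left]z,x\right]\subset\left]z,s_0\right]$ avoids $\mathcal F$ and $s=x$ works; or else $\mathfrak p(s_0)\cap X=\emptyset$, so in particular $s_0\notin X$, and maximality of $X$ in $Z$ gives some $x\in X$ comparable to $s_0$. As nothing below $s_0$ belongs to $X$, necessarily $x>s_0$, whence $\left]z,x\right]=\left]z,s_0\right]\cupdot\left]s_0,x\right]$; the first interval avoids $\mathcal F$ by construction and the second consists only of levels $>\beta>\beta_0$, so $\delta\big[\left]z,x\right]\big]\cap\mathcal F=\emptyset$ and $s=x$ is as required. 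Choosing $\beta>\max\mathcal F$ is exactly what makes this last ascent automatically avoid $\mathcal F$.
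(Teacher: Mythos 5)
Your proof is correct and follows essentially the same route as the paper's: pick an element of $N_{\mathfrak p(z)}\setminus Z$ at a level $\beta>\max\mathcal F$, use continuity of the finitely many relevant connecting maps at that (unique nonisolated) point to replace it by a point of $Z\cap\delta^{-1}(\beta)$ above $z$ avoiding the levels in $\mathcal F$, then ascend to $X$ by maximality. Your version merely spells out the tail-neighborhood argument and the final case split that the paper leaves implicit.
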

\begin{proof}
Let $X$ be a maximal antichain in $Z$ and pick $z\in Z$ with $\mathfrak p(z)\cap X=\emptyset$; write $\alpha=\delta(z)$. If $\mathcal F$ is a finite subset of $\big(\mathbb S\cup\{0\}\big)\cap\omega_1$, then there exists $t\in N_{\mathfrak p(z)}\setminus Z$ with $\delta(t)=\beta>\sup\mathcal F$. Since $g_{\alpha\beta}(t)=z$ and $g_{\gamma\beta}(t)=\infty$, for all $\gamma\in\left]\alpha,\beta\right[\cap\mathbb S$, it follows
from the continuity of the connecting maps that there exists $w\in Z\cap\delta^{-1}(\beta)$ such that $g_{\alpha\beta}(w)=z$ and
$g_{\gamma\beta}(w)=\infty$, for all $\gamma\in\mathcal F$ with $\gamma>\alpha$. Note that $w>z$ and $\delta\big[\left]z,w\right]\big]\cap\mathcal F=\emptyset$. To conclude the proof, use the fact that $X$ is a maximal antichain in $Z$ to obtain $v\in X$ comparable with $w$ and note that
$v>z$ and $\delta\big[\left]z,v\right]\big]\cap\mathcal F=\emptyset$.
\end{proof}

\begin{cor}\label{thm:corsofaltaesse}
Let $\le$ be an admissible partial order in $T$ and assume that, for every countable path $A\in P^*(T)$, the set $N_A\setminus Z$ is uncountable.
If $P^*(T)$ is endowed with the compatible topology given by Proposition~\ref{thm:maquininha}, then $K=P^*(T)$ is a Valdivia compact space satisfying conditions (a)---(d) in the statement of Theorem~\ref{thm:eba}.
\end{cor}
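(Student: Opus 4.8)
The plan is to verify that the hypotheses of Lemma~\ref{thm:chega} are all satisfied, so that the conclusion follows immediately from that lemma. The corollary is essentially a packaging result: it collects the standing assumptions of the subsection (the explicit description of $T$, $Z$, and $\delta$, together with the topologies on the fibers $\delta^{-1}(\alpha)$) and the extra hypothesis that $N_A\setminus Z$ is uncountable for every countable path $A$, and claims these imply the three numbered conditions (i)---(iii) of Lemma~\ref{thm:chega}. First I would note that, by the construction in this subsection, each fiber $\delta^{-1}(\alpha)=[0,\omega]\times\{\alpha\}$ is homeomorphic to the ordinal $[0,\omega]$ with the order topology, which is a compact (hence locally compact Hausdorff) second countable space with exactly one nonisolated point, namely $(\omega,\alpha)$. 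This gives condition (ii) of Lemma~\ref{thm:chega} immediately and also shows that each $\delta^{-1}(\alpha)$ is metrizable; combined with Corollary~\ref{thm:limitapeso} this yields that $P^*(T_\alpha)$ is second countable, hence metrizable, for all $\alpha\in\omega_1$, which is the standing hypothesis required throughout.

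Next I would address condition (i) of Lemma~\ref{thm:chega}, that $N_A$ is uncountable for every countable path $A\in P^*(T)$. This is where the extra hypothesis enters: since $N_A\setminus Z\subset N_A$, the assumption that $N_A\setminus Z$ is uncountable for every countable path $A$ immediately forces $N_A$ itself to be uncountable. So condition (i) is a direct weakening of the hypothesis. For condition (iii), that $Z$ is an initial part of $T$ with ccc, I would split the two clauses. That $Z$ is an initial part of $T$ is built into admissibility: property (2) of Definition~\ref{thm:defadmissible} asserts precisely that $Z_\alpha$ is an initial part of $(T_\alpha,\le)$ for the relevant $\alpha$, and passing to $\alpha=\omega_1$ (so that $T_{\omega_1}=T$ and $Z_{\omega_1}=Z$) gives that $Z$ is an initial part of $T$.

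The main obstacle---and the only genuinely nontrivial step---is to show that $Z$ has ccc. Here I would apply Corollary~\ref{thm:temccc} to the graded tree $(Z,\delta\vert_Z)$, using the $\diamondsuit$-sequence $(Z^\diamondsuit_\alpha)_{\alpha\in\omega_1}$ for the continuous filtration $Z=\bigcup_{\alpha\in\omega_1}Z_\alpha$. To invoke that corollary I must verify its two hypotheses for $Z$: that each $Z_\alpha$ is countable (immediate, since $Z_\alpha\subset\omega\times\alpha$ is a countable set), that hypothesis (i) of Corollary~\ref{thm:temccc} holds, and that hypothesis (ii) holds, i.e.\ every maximal antichain in $Z$ is special in $Z$. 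The last of these is exactly the content of the preceding unnamed lemma, whose hypotheses are met because the extra assumption $N_A\setminus Z$ uncountable specializes, for $A=\mathfrak p(z)$ with $z\in Z$, to $N_{\mathfrak p(z)}\setminus Z$ uncountable. For hypothesis (i) of Corollary~\ref{thm:temccc}, I would read it off from property (3) of admissibility: that property states precisely that whenever $Z^\diamondsuit_\beta$ is a special antichain in $Z_\beta$, then $\mathfrak p(z)$ meets $Z^\diamondsuit_\beta$ for every $z\in Z\setminus Z_\beta$, which is exactly what Corollary~\ref{thm:temccc}(i) demands. Once ccc for $Z$ is established, all three conditions of Lemma~\ref{thm:chega} hold, and that lemma delivers that $K=P^*(T)$ is a Valdivia compact space satisfying (a)---(d) of Theorem~\ref{thm:eba}, completing the proof.
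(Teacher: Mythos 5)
Your proposal is correct and follows exactly the paper's route: the paper's own proof likewise derives ccc of $Z$ from Corollary~\ref{thm:temccc} (with the preceding lemma supplying hypothesis~(ii) via the specialization $A=\mathfrak p(z)$, and admissibility property~(3) supplying hypothesis~(i)), obtains metrizability of $P^*(T_\alpha)$ from Corollary~\ref{thm:limitapeso}, and then invokes Lemma~\ref{thm:chega}. Your write-up merely makes explicit the routine verifications (conditions~(i) and~(ii) of Lemma~\ref{thm:chega} from the fiber topology $[0,\omega]$ and the inclusion $N_A\setminus Z\subset N_A$, and $Z$ being an initial part from admissibility property~(2)) that the paper leaves implicit.
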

\begin{proof}
It follows from Corollary~\ref{thm:temccc} that $Z$ has ccc and from Corollary~\ref{thm:limitapeso} that $P^*(T_\alpha)$ is metrizable,
for all $\alpha\in\omega_1$. The assumptions of Lemma~\ref{thm:chega} are thus satisfied.
\end{proof}

We are going to construct, by recursion, an admissible partial order in $T$ satisfying the assumption of Corollary~\ref{thm:corsofaltaesse}. Note that, if $\alpha\in[0,\omega_1]$ is a limit ordinal and, for each $\beta<\alpha$, an admissible partial order ${\le}_\beta$ is given in $T_\beta$
such that $(T_\beta,{\le}_\beta)$ is a subtree of $(T_\gamma,{\le}_\gamma)$, for all $\beta\le\gamma<\alpha$, then ${\le}_\alpha=\bigcup_{\beta<\alpha}{\le}_\beta$ is an admissible partial order in $T_\alpha$. For the recursion step, we will use the lemma below.

\begin{lem}\label{thm:estendeadmissible}
Given $\alpha\in\omega_1$, an admissible partial order $\le$ in $T_\alpha$ and a path $A\in P^*(T_\alpha)$, there exists an admissible
partial order ${\le}'$ in $T_{\alpha+1}$ such that $(T_\alpha,{\le})$ is a subtree of $(T_{\alpha+1},{\le}')$ and
$A=\big\{t\in T_\alpha:t<'(\omega,\alpha+1)\big\}$.
\end{lem}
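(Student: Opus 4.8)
The plan is to build $\le'$ on $T_{\alpha+1}=T_\alpha\cup\delta^{-1}(\alpha+1)$ by retaining $\le$ on $T_\alpha$, keeping the new points of $\delta^{-1}(\alpha+1)$ pairwise incomparable, and placing each of them above a prescribed path of $T_\alpha$: the nonisolated point $(\omega,\alpha+1)$ above the given $A$, and each isolated point $(n,\alpha+1)$ above a path $A^{(n)}$ of $Z_\alpha$ still to be chosen. As every new point is then maximal with set of predecessors a chain that is an initial part, $(T_{\alpha+1},{\le}')$ is automatically a tree with grading $\delta$, $(T_\alpha,\le)$ is a subtree, and $A=\{t\in T_\alpha:t<'(\omega,\alpha+1)\}$, as required. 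Since $(\omega,\alpha+1)$ is the only nonisolated point of $\delta^{-1}(\alpha+1)$, and $g_{\gamma,\alpha+1}$ sends $(n,\alpha+1)$ to $g_\gamma(A^{(n)})$ and $(\omega,\alpha+1)$ to $g_\gamma(A)$, the connecting maps newly introduced at level $\alpha+1$ are continuous exactly when $g_\gamma(A^{(n)})\to g_\gamma(A)$ for every $\gamma$, i.e.\ when $A^{(n)}\to A$ in the compact, metrizable space $P^*(T_\alpha)$ (metrizable by Corollary~\ref{thm:limitapeso}); the remaining connecting maps coincide with those of $(T_\alpha,\le)$. Thus admissibility of $\le'$ reduces to producing paths $A^{(n)}\subseteq Z_\alpha$ (which gives admissibility condition (2) for free) such that (I) $A^{(n)}\to A$, and (II) each $A^{(n)}$ meets every $Z^\diamondsuit_\beta$ that is special in $Z_\beta$, $\beta\le\alpha$ (which gives condition (3)).

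First I would record a density fact: the principal paths $\mathfrak p(t)$, $t\in Z_\alpha$, are dense in $P^*(T_\alpha)$. Given a basic neighbourhood of $A$ constraining finitely many levels, let $\gamma^*$ be the largest such level at which $A$ is present. If $g_{\gamma^*}(A)\in Z$, take $t=g_{\gamma^*}(A)$; if $g_{\gamma^*}(A)=(\omega,\gamma^*)$, use the continuity of the connecting maps $g_{\gamma,\gamma^*}$ of $(T_\alpha,\le)$ together with the density of $Z\cap\delta^{-1}(\gamma^*)$ in $\delta^{-1}(\gamma^*)$ to choose an isolated $t=(m,\gamma^*)$ whose lower predecessors approximate those of $A$. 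In either case $\mathfrak p(t)\subseteq Z_\alpha$ lands inside the prescribed neighbourhood. By metrizability this yields stems $C_n=\mathfrak p(t_n)$ with $C_n\to A$ and $\delta(t_n)\to\sup\delta[A]$.

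The heart of the construction is to inflate each stem into a path that also meets all the relevant special antichains, without destroying $C_n\to A$; here two complementary mechanisms cooperate. For $\beta$ with $\delta(t_n)>\beta$, admissibility condition (3) for $\le$ already forces $\mathfrak p(t_n)$ to meet $Z^\diamondsuit_\beta$, so the ``low'' special antichains come for free. For the remaining ``high'' ones I would climb upward through them: from the current top $t$, with $\mathfrak p(t)$ not yet meeting the antichain, and from the finite set $\mathcal F_n$ of the first $n$ levels, the defining property of a special antichain supplies some $s>t$ in it with $\delta\big[\left]t,s\right]\big]\cap\mathcal F_n=\emptyset$; iterating (and passing to unions at limit stages, which is legitimate by the continuity of the inverse sequence) produces a path $A^{(n)}\supseteq C_n$, still contained in $Z_\alpha$ because each $s$ lies in $Z$ and $Z$ is an initial part, that meets every special $Z^\diamondsuit_\beta$. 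Since the whole climb lies strictly above level $\delta(t_n)$ and avoids $\mathcal F_n$, for each fixed $\gamma$ one has $g_\gamma(A^{(n)})=g_\gamma(C_n)$ for all large $n$, so (I) follows from $C_n\to A$.

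The main obstacle, and the precise point where specialness is indispensable, is this balancing act: every $A^{(n)}$ must meet each $Z^\diamondsuit_\beta$ at a level $\le\beta$, while convergence forces each fixed level to be visited by only finitely many of the $A^{(n)}$. This is feasible only because the defining property of a special antichain allows extension through it while avoiding any prescribed finite set of levels; equivalently, a special antichain in $Z_\beta$ has elements at cofinally high levels (so none exists when $\beta$ is finite, as $Z_\beta$ then has only finitely many levels), and the meeting levels can be pushed off as $n\to\infty$. It then remains to verify the routine points: that the iterated climbing through (possibly infinitely many) antichains terminates in a genuine, possibly cofinal, path of $Z_\alpha$, and that conditions (1)--(3) of admissibility indeed reduce to (I) and (II) as claimed.
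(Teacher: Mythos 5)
Your proposal is correct in substance and shares the paper's skeleton: define $\le'$ as the order induced by a sequence of paths $(A^{(n)})_{n\in\omega}$ in $P^*(Z_\alpha)$ below the isolated points of $\delta^{-1}(\alpha+1)$ and $A$ below $(\omega,\alpha+1)$, reduce admissibility to (I) convergence $A^{(n)}\to A$ and (II) meeting all relevant special antichains, and obtain (II) by climbing through each antichain while avoiding a prescribed finite set of levels --- this last step is exactly the paper's Lemma~\ref{thm:B}. You diverge in the two remaining technical components, in both cases replacing the paper's combinatorics with something arguably cleaner. First, where the paper's Lemma~\ref{thm:zn} constructs anchor points $z_n\in Z_\alpha$ by an explicit four-case analysis (largest element or not, $A\subseteq Z_\alpha$ or not) so that \emph{any} paths through the $z_n$ handle the levels $\le\sup\delta[A]$, you prove a density statement --- principal paths $\mathfrak p(t)$, $t\in Z_\alpha$, approximate $A$ in the compact metrizable $P^*(T_\alpha)$ --- and extract converging stems $C_n=\mathfrak p(t_n)$; your density argument is sound (the case $g_{\gamma^*}(A)=(\omega,\gamma^*)$ is precisely the paper's fourth case in disguise) and the metrizability appeal to Corollary~\ref{thm:limitapeso} is legitimate. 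Second, where the paper controls the levels $\beta>\sup\delta[A]$ via the self-referential bookkeeping $\mathcal F=\bigcup_{i<n}\delta\bigl[\phi_i^{-1}\bigl[[0,n]\bigr]\bigr]$ with injections $\phi_i:A_i\to\omega$ tracking levels actually used by earlier paths, you avoid the first $n$ terms of a fixed enumeration of \emph{all} levels of $T_\alpha$; since specialness permits avoidance of an arbitrary finite level set, this simpler device also guarantees that each fixed level above $\sup\delta[A]$ is visited by only finitely many $A^{(n)}$, which is all that continuity at the (isolated) point $\infty$ of $\delta^{-1}(\beta)\cupdot\{\infty\}$ requires.

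Two caveats. (i) Your phrase ``the finite set $\mathcal F_n$ of the first $n$ levels'' must be read as the first $n$ terms of a fixed enumeration of the countably many levels in $\bigl(\mathbb S\cup\{0\}\bigr)\cap[0,\alpha]$; the literal reading $\mathcal F_n=\{0,1,\dots,n\}$ never captures a fixed level $\gamma\ge\omega$, and then $g_\gamma(A^{(n)})\ne\infty$ could occur for infinitely many $n$ while $g_\gamma(A)=\infty$ is isolated, destroying (I). Your later claim that $g_\gamma\bigl(A^{(n)}\bigr)=g_\gamma(C_n)$ for all large $n$ shows you intend the exhaustive reading, so this is a matter of wording, not of substance. (ii) The parenthetical ``equivalently, a special antichain in $Z_\beta$ has elements at cofinally high levels (so none exists when $\beta$ is finite)'' is false as stated: the zeroth level $\omega\times\{0\}$ is vacuously special in every $Z_\beta$, since every nonempty path meets it (indeed, for finite $\beta$ one can check it is the \emph{only} special antichain in $Z_\beta$). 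This does not damage your argument, because you invoke the climbing step only when $\mathfrak p(t)$ does not yet meet the antichain, and in that situation the defining property of specialness (in $Z_\alpha$, which follows from specialness in $Z_\beta$ together with condition~(3) of Definition~\ref{thm:defadmissible}) does supply the required $s>t$ avoiding $\mathcal F_n$; but the ``equivalently'' gloss should be deleted. The remaining deferred points (the union of the $\omega$-recursion is a path of $Z_\alpha$, and the reduction of conditions (1)--(3) to (I) and (II)) are indeed routine, as in the paper's Lemma~\ref{thm:lema123}.
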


We postpone for a moment the proof of Lemma~\ref{thm:estendeadmissible} to conclude the proof of Theorem~\ref{thm:eba}.

\begin{cor}\label{thm:psi}
There exists an admissible partial order $\le$ in $T$ such that $N_A\setminus Z$ is uncountable, for every countable path $A\in P^*(T)$.
\end{cor}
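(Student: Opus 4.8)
The plan is to construct the admissible partial order on $T$ by transfinite recursion on $\alpha\in\omega_1$, building a coherent tower $({\le}_\alpha)_{\alpha\in\omega_1}$ of admissible partial orders on the $T_\alpha$ such that $(T_\beta,{\le}_\beta)$ is a subtree of $(T_\alpha,{\le}_\alpha)$ whenever $\beta\le\alpha$, and then setting ${\le}=\bigcup_{\alpha\in\omega_1}{\le}_\alpha$. The base case $\alpha=0$ requires an admissible order on $T_0=[0,\omega]\times\{0\}$, which is an antichain of height one (every element is minimal, $\delta\equiv0$), so the trivial order works and condition (3) is vacuous. Limit stages are handled by the observation already recorded just before Lemma~\ref{thm:estendeadmissible}: the union of a coherent tower of admissible orders over a limit ordinal is again admissible. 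The successor stage $\alpha\mapsto\alpha+1$ is exactly Lemma~\ref{thm:estendeadmissible}, which lets me extend any admissible order on $T_\alpha$ to one on $T_{\alpha+1}$ while \emph{prescribing} the path $A$ below the new non-$Z$ vertex $(\omega,\alpha+1)$.

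\textbf{Arranging that every countable path acquires uncountably many non-$Z$ successors.}
The content of the corollary beyond mere admissibility is the requirement that $N_A\setminus Z$ be uncountable for every countable path $A\in P^*(T)$. Since $\delta^{-1}(\alpha+1)$ contains exactly one non-$Z$ point, namely $(\omega,\alpha+1)$, achieving uncountability means arranging, for each fixed countable path $A$, that $(\omega,\alpha+1)$ is placed into $N_A$ for uncountably many successor stages $\alpha+1$. The main work is therefore a bookkeeping argument: I will enumerate in a way that, for each countable path $A$ that appears (equivalently, for each $A\in\bigcup_{\alpha}P^*(T_\alpha)$), the recursion returns to $A$ cofinally often in $\omega_1$, each time invoking Lemma~\ref{thm:estendeadmissible} with that path to force $(\omega,\alpha+1)\in N_A$. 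A clean device is to fix a surjection $\omega_1\to\bigcup_{\alpha}P^*(T_\alpha)$ hitting each value uncountably often and, at stage $\alpha$, feed Lemma~\ref{thm:estendeadmissible} the path indexed by $\alpha$ (restricted or truncated so as to lie in $P^*(T_\alpha)$). Because every countable path $A$ is $\rho_\alpha(A)$ for a tail of $\alpha$, each such $A$ is targeted at uncountably many successor stages, so $N_A\setminus Z$ is uncountable as required.

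\textbf{The main obstacle.}
The real difficulty is not the recursion scheme but the simultaneous maintenance of \emph{all three} admissibility clauses through the successor step—and this is precisely what Lemma~\ref{thm:estendeadmissible} is designed to absorb. Clause (1) demands that the connecting maps of $(T_{\alpha+1},\delta)$ remain continuous, which constrains how the new vertices of $\delta^{-1}(\alpha+1)$ may be attached to the earlier levels; clause (3) demands that, whenever $Z^\diamondsuit_\beta$ happens to be a special antichain in $Z_\beta$ for some $\beta\le\alpha$, every newly added $z\in Z_{\alpha+1}\setminus Z_\beta$ has $\mathfrak p(z)$ meeting $Z^\diamondsuit_\beta$. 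These two requirements pull in opposite directions from the free choice of the prescribed path $A$, and reconciling them (while keeping $Z_{\alpha+1}$ an initial part, clause (2)) is the technically delicate core; I expect to lean entirely on Lemma~\ref{thm:estendeadmissible} for this, treating its statement as a black box here.

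\textbf{Conclusion.}
Granting Lemma~\ref{thm:estendeadmissible}, the recursion produces a coherent tower whose union ${\le}$ is an admissible partial order on all of $T$, and the targeting argument guarantees $N_A\setminus Z$ uncountable for every countable path $A\in P^*(T)$. This is exactly the conclusion of the corollary, and combined with Corollary~\ref{thm:corsofaltaesse} it yields the counterexample of Theorem~\ref{thm:eba}.
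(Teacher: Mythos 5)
Your proposal is correct and follows essentially the same route as the paper: a transfinite recursion whose successor step is Lemma~\ref{thm:estendeadmissible}, unions at limit stages, and a bookkeeping function that returns to each candidate path uncountably often so that $(\omega,\alpha+1)$ is placed in $N_A$ at uncountably many successor stages. The one wrinkle is that you propose fixing in advance a surjection onto $\bigcup_{\alpha}P^*(T_\alpha)$, a set that only exists after the order has been built; the paper sidesteps this circularity by enumerating instead the order-independent set $\wp_\omega(T)$ of all countable subsets of $T$ (of cardinality $\omega_1$, since $\diamondsuit$ implies CH) via a map $\psi:\omega_1\to\wp_\omega(T)$ with uncountable fibers, using $\psi(\alpha)$ at stage $\alpha$ only when it happens to lie in $P^*(T_\alpha)$ and a default path such as $\{(\omega,0)\}$ otherwise --- a trivial repair that your parenthetical ``restricted or truncated'' already gestures at, since coherence of the tower guarantees every countable path of the final tree lies in $P^*(T_\alpha)$ for a tail of $\alpha$.
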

\begin{proof}
Denote by $\wp_\omega(T)$ the collection of all countable subsets of $T$ and let $\psi:\omega_1\to\wp_\omega(T)$ be a map with $\psi^{-1}(A)$ uncountable,
for all $A\in\wp_\omega(T)$. Using Lemma~\ref{thm:estendeadmissible}, construct by recursion a family $({\le}_\alpha)_{\alpha\in\omega_1}$,
with ${\le}_\alpha$ an admissible partial order in $T_\alpha$, such that $(T_\alpha,{\le}_\alpha)$ is a subtree of $(T_\beta,{\le}_\beta)$,
for $\alpha\le\beta\in\omega_1$, and such that, for all $\alpha\in\omega_1$, we have
\[\big\{t\in T_\alpha:t<_{\alpha+1}(\omega,\alpha+1)\big\}=A,\]
where $A=\psi(\alpha)$, if $\psi(\alpha)\in P^*(T_\alpha)$, and $A=\{(\omega,0)\}$, otherwise. To conclude the proof,
set ${\le}=\bigcup_{\alpha\in\omega_1}{\le}_\alpha$.
\end{proof}

\begin{proof}[Proof of Theorem~\ref{thm:eba}]
Follows directly from Corollaries~\ref{thm:corsofaltaesse} and \ref{thm:psi}.
\end{proof}

We now turn to the proof of Lemma~\ref{thm:estendeadmissible}. In order to extend an admissible partial order from $T_\alpha$ to $T_{\alpha+1}$,
we need to specify, for each $n\in[0,\omega]$, a path $A_n\in P^*(T_\alpha)$ which will be the set of predecessors of $(n,\alpha+1)$.
We then give the following definition.
\begin{defin}
Given $\alpha\in\omega_1$, let $\le$ be a partial order in $T_\alpha$ such that $(T_\alpha,{\le})$ is a tree and let $(A_n)_{n\in[0,\omega]}$ be a sequence in $P(T_\alpha)$. We define a partial order ${\le}'$ in $T_{\alpha+1}$ by requiring that, for all $t,s\in T_{\alpha+1}$, we have $t<'s$ if and only if one of the following conditions holds:
\begin{itemize}
\item $t,s\in T_\alpha$ and $t<s$;
\item $t\in A_n$ and $s=(n,\alpha+1)$, for some $n\in[0,\omega]$.
\end{itemize}
We call ${\le}'$ the partial order in $T_{\alpha+1}$ {\em induced\/} by the sequence $(A_n)_{n\in[0,\omega]}$.
\end{defin}

\begin{lem}\label{thm:lema123}
Let $\alpha\in\omega_1$ and fix a partial order $\le$ in $T_\alpha$ such that $(T_\alpha,\le)$ is a tree and $\delta\vert_{T_\alpha}$ is a grading function.
Let $(A_n)_{n\in[0,\omega]}$ be a sequence in $P^*(T_\alpha)$ and ${\le}'$ be the partial order induced in $T_{\alpha+1}$. The following statements hold:
\begin{enumerate}
\item $(T_{\alpha+1},{\le}')$ is a tree, $\delta\vert_{T_{\alpha+1}}$ is a grading function and $(T_\alpha,{\le})$ is a subtree of $(T_{\alpha+1},{\le}')$;
\item if $Z_\alpha$ is an initial part of $(T_\alpha,{\le})$ and $A_n$ is in $P^*(Z_\alpha)$, for all $n\in\omega$, then $Z_{\alpha+1}$ is an initial
part of $(T_{\alpha+1},{\le}')$.
\item Assume that the connecting maps of $(T_\alpha,\delta\vert_{T_\alpha})$ are continuous and let $\beta\le\alpha$ in $\mathbb S\cup\{0\}$ be such that the connecting map $g_{\beta,\alpha+1}$ of $(T_{\alpha+1},\delta\vert_{T_{\alpha+1}})$ is continuous. If $g_{\beta,\alpha+1}(\omega,\alpha+1)\ne\infty$, then the connecting map $g_{\gamma,\alpha+1}$ of $(T_{\alpha+1},\delta\vert_{T_{\alpha+1}})$ is continuous, for all $\gamma\le\beta$ in $\mathbb S\cup\{0\}$.
\end{enumerate}
\end{lem}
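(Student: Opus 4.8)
The plan is to read (1) and (2) directly off the definition of the induced order and to reserve the real work for (3). For (1), observe that the only new strict relations are $t<'(n,\alpha+1)$ with $t\in A_n$, and that no new point lies below an old one; thus for $t\in T_\alpha$ the set $\left]\cdot,t\right[$ is the same under $\le$ and ${\le}'$, so it stays well-ordered and $(T_\alpha,\le)$ is an initial subtree of $(T_{\alpha+1},{\le}')$, while for a new point $\left]\cdot,(n,\alpha+1)\right[=A_n$ is a path of $T_\alpha$, hence a well-ordered chain; this verifies the tree axiom. That $\delta\vert_{T_{\alpha+1}}$ grades then follows because each $A_n\ne\emptyset$ (so the new points are not minimal, and the minimal elements are exactly those of $T_\alpha$, which satisfy $\delta=0$) and because $\delta(t)\le\alpha<\alpha+1$ for every $t\in A_n\subset T_\alpha$. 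For (2), I would use $Z_{\alpha+1}=Z_\alpha\cup(\omega\times\{\alpha+1\})$ together with $\left]\cdot,(n,\alpha+1)\right[=A_n\subset Z_\alpha$ for $n<\omega$ (by hypothesis) and $\left]\cdot,t\right[_{{\le}'}=\left]\cdot,t\right[_{\le}\subset Z_\alpha$ for $t\in Z_\alpha$; note that the point $(\omega,\alpha+1)$ is deliberately absent from $Z_{\alpha+1}$, so $A_\omega$ is left unconstrained.

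The core is (3). First I would reduce to a single point: $\delta^{-1}(\alpha+1)=[0,\omega]\times\{\alpha+1\}$ is a convergent sequence with limit $(\omega,\alpha+1)$, so $g_{\gamma,\alpha+1}$ is continuous if and only if $g_{\gamma,\alpha+1}(n,\alpha+1)\to g_{\gamma,\alpha+1}(\omega,\alpha+1)$ as $n\to\infty$, continuity at the isolated points $(n,\alpha+1)$, $n<\omega$, being automatic. The key identity I would establish is: for $\gamma\le\beta$ and any $n\in[0,\omega]$, if $v:=g_{\beta,\alpha+1}(n,\alpha+1)\ne\infty$, then $g_{\gamma,\alpha+1}(n,\alpha+1)=g_{\gamma\beta}(v)$, where $g_{\gamma\beta}$ is the connecting map of $T_\alpha$. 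This holds because $g_{\gamma,\alpha+1}(n,\alpha+1)$ is the level-$\gamma$ element of the path $A_n$, and since $A_n$ contains the level-$\beta$ element $v$ with $\gamma\le\beta$, the chain $\left]\cdot,v\right]\subset A_n$ already carries every member of $A_n$ of level $\le\beta$ ($\delta$ being strictly increasing along chains); hence the level-$\gamma$ readings off $A_n$ and off $\mathfrak p(v)$ coincide, including the case where both are $\infty$.

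Then I would feed in the hypotheses. Since $\delta^{-1}(\beta)$ is a compact convergent sequence, $\infty$ is isolated in $\delta^{-1}(\beta)\cupdot\{\infty\}$ and $\delta^{-1}(\beta)$ is open there; continuity of $g_{\beta,\alpha+1}$ together with $g_{\beta,\alpha+1}(\omega,\alpha+1)=u_0\ne\infty$ forces $v_n:=g_{\beta,\alpha+1}(n,\alpha+1)\ne\infty$ for all large $n$, with $v_n\to u_0$ in $\delta^{-1}(\beta)$. For these $n$ the identity gives $g_{\gamma,\alpha+1}(n,\alpha+1)=g_{\gamma\beta}(v_n)$, and continuity of the connecting map $g_{\gamma\beta}$ of $T_\alpha$ yields $g_{\gamma\beta}(v_n)\to g_{\gamma\beta}(u_0)=g_{\gamma,\alpha+1}(\omega,\alpha+1)$. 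Continuity of $g_{\gamma,\alpha+1}$ at $(\omega,\alpha+1)$ follows, completing (3).

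The step I expect to be delicate, and the only place the hypothesis $g_{\beta,\alpha+1}(\omega,\alpha+1)\ne\infty$ is used, is precisely this identity: the naive composition $g_{\gamma,\alpha+1}=g_{\gamma\beta}\circ g_{\beta,\alpha+1}$ breaks down exactly when the intermediate value is $\infty$, because a path may skip level $\beta$ while still meeting level $\gamma$. The hypothesis, combined with $\infty$ being isolated, confines this pathology to finitely many indices $n$, which are isolated and hence irrelevant to the limit; continuity is thereby routed entirely through the well-behaved tail at level $\beta$.
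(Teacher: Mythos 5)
Your proposal is correct and follows essentially the same route as the paper: the paper's proof of (3) is exactly your key identity, namely that $g_{\gamma,\alpha+1}=g_{\gamma\beta}\circ g_{\beta,\alpha+1}$ holds on the open set $\big\{t\in\delta^{-1}(\alpha+1):g_{\beta,\alpha+1}(t)\ne\infty\big\}$, which contains $(\omega,\alpha+1)$ by hypothesis, while the paper likewise dismisses (1) and (2) as straightforward. Your write-up merely makes explicit what the paper leaves implicit (the justification of the identity via $\delta$ being strictly increasing along the path $A_n$, and the reduction of continuity to the single nonisolated point $(\omega,\alpha+1)$).
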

\begin{proof}
The proofs of (1) and (2) are straightforward and, to prove (3), note that the equality $g_{\gamma,\alpha+1}=g_{\gamma\beta}\circ g_{\beta,\alpha+1}$ holds in the open subset
\[\big\{t\in\delta^{-1}(\alpha+1):g_{\beta,\alpha+1}(t)\ne\infty\big\}\]
of $\delta^{-1}(\alpha+1)$.
\end{proof}

Our task now is to find an appropriate sequence $(A_n)_{n\in[0,\omega]}$ to induce the partial order of $T_{\alpha+1}$.
\begin{lem}\label{thm:zn}
Let $\alpha\in\omega_1$ and fix a partial order $\le$ in $T_\alpha$ satisfying conditions (1) and (2) of Definition~\ref{thm:defadmissible}. Given $A_\omega\in P^*(T_\alpha)$, there exists a sequence $(z_n)_{n\in\omega}$ in $Z_\alpha$ satisfying the following properties:
\begin{itemize}
\item[(a)] $\sup_{n\in\omega}\delta(z_n)=\sup\delta[A_\omega]$;
\item[(b)] given a sequence $(A_n)_{n\in\omega}$ in $P^*(T_\alpha)$ with $z_n\in A_n$, for all $n\in\omega$, if $T_{\alpha+1}$ is endowed with the partial order induced by $(A_n)_{n\in[0,\omega]}$, then the connecting maps $g_{\beta,\alpha+1}$ of the graded tree $T_{\alpha+1}$ are continuous,
    for all $\beta\in\mathbb S\cup\{0\}$ with $\beta\le\sup\delta[A_\omega]$.
\end{itemize}
\end{lem}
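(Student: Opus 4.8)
The plan is to first translate condition (b) into a single convergence requirement inside $P^*(T_\alpha)$, and then to build $(z_n)$ by approximating $A_\omega$ from within $Z_\alpha$, using the continuity of the connecting maps of $(T_\alpha,\delta\vert_{T_\alpha})$ (granted by Definition~\ref{thm:defadmissible}(1)) at the non-isolated points of the fibres.

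For the reduction, set $\lambda:=\sup\delta[A_\omega]$ and fix $\beta\in\mathbb S\cup\{0\}$ with $\beta\le\lambda$. Since every $(n,\alpha+1)$, $n<\omega$, is isolated in $\delta^{-1}(\alpha+1)$, the map $g_{\beta,\alpha+1}$ is continuous if and only if it is continuous at the unique non-isolated point $(\omega,\alpha+1)$, i.e.\ $g_{\beta,\alpha+1}(n,\alpha+1)\to g_{\beta,\alpha+1}(\omega,\alpha+1)$. As the $\le'$-predecessors of $(n,\alpha+1)$ below level $\alpha+1$ are exactly $A_n$, and those of $(\omega,\alpha+1)$ are $A_\omega$, this reads $g_\beta(A_n)\to g_\beta(A_\omega)$. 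Now $\mathfrak p(z_n)\subseteq A_n$, since $A_n$ is an initial part containing $z_n$, and both are chains; hence whenever $\beta\le\delta(z_n)$ one has $g_\beta(A_n)=g_\beta\big(\mathfrak p(z_n)\big)$, so the value is \emph{independent} of the chosen $A_n$. It therefore suffices to produce $(z_n)$ with $\delta(z_n)$ nondecreasing, $\sup_n\delta(z_n)=\lambda$, and $g_\beta\big(\mathfrak p(z_n)\big)\to g_\beta(A_\omega)$ for every $\beta\le\lambda$; then for each fixed $\beta<\lambda$ the inequality $\beta\le\delta(z_n)$ holds for all large $n$, while the borderline $\beta=\lambda$ can occur only when $\lambda\in\mathbb S\cup\{0\}$, treated below.

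To construct $(z_n)$, choose a nondecreasing sequence $(\beta_n)$ in $\delta[A_\omega]$ with $\sup_n\beta_n=\lambda$ (possible since $\delta[A_\omega]$ is cofinal in the countable ordinal $\lambda$), and let $a_n\in A_\omega$ be the element at level $\beta_n$. Because $A_\omega$ is a chain and an initial part, $\mathfrak p(a_n)=A_\omega\cap T_{\beta_n}=\rho_{\beta_n}(A_\omega)$, and checking the coordinate maps $g_\gamma$ one verifies $\mathfrak p(a_n)\to A_\omega$ in $P^*(T_\alpha)$. As $T_\alpha$ is countable, $P^*(T_\alpha)$ is second countable by Corollary~\ref{thm:limitapeso}, hence metrizable; fix a metric $d$. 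I then replace each $a_n$ by a point $z_n\in Z_\alpha$ at the same level $\beta_n$ with $d\big(\mathfrak p(z_n),\mathfrak p(a_n)\big)<2^{-n}$: if $a_n\in Z_\alpha$ take $z_n=a_n$; if $a_n=(\omega,\beta_n)\notin Z_\alpha$, use that $(k,\beta_n)\to(\omega,\beta_n)$ together with the continuity of the connecting maps $g_{\gamma\beta_n}$ at $(\omega,\beta_n)$ to get $\mathfrak p\big((k,\beta_n)\big)\to\mathfrak p\big((\omega,\beta_n)\big)=\mathfrak p(a_n)$, and pick $k$ large. Then $\delta(z_n)=\beta_n\to\lambda$ gives (a), and $\mathfrak p(z_n)\to A_\omega$ gives the required convergence $g_\beta\big(\mathfrak p(z_n)\big)\to g_\beta(A_\omega)$ for all $\beta\le\lambda$.

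The borderline level $\beta=\lambda$ arises precisely when $A_\omega$ has a maximum $t^*$ at level $\lambda$ (so $\lambda\in\mathbb S\cup\{0\}$); then $\beta_n=\lambda$ for all large $n$, whence $\delta(z_n)=\lambda=\beta$ and the independence argument applies at $\beta=\lambda$ as well, with $g_\lambda\big(\mathfrak p(z_n)\big)=z_n\to t^*=g_\lambda(A_\omega)$ by the approximation above. The main obstacle is exactly the situation in which $A_\omega$ runs through non-isolated points $(\omega,\beta_n)\notin Z$, for there $A_\omega$ cannot be approached from within $A_\omega\cap Z$; this is overcome by the continuity of the connecting maps, which allows one to approximate each such non-isolated point by isolated points of $Z$ at the same level whose predecessor paths converge to the corresponding truncation of $A_\omega$.
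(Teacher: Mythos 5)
Your proof is correct, and it takes a genuinely different route from the paper's. The paper argues by a four-fold case distinction on $A_\omega$ (contained in $Z_\alpha$ or not; possessing a largest element or not): in the easy cases it takes $z_n$ constant, an increasing cofinal sequence in $A_\omega$, or $z_n=(n,\beta)$ below a top point $(\omega,\beta)$, and in the hardest case (no largest element, $A_\omega\not\subset Z_\alpha$) it fixes an increasing cofinal sequence $\big((\omega,\beta_n)\big)_{n\in\omega}$ in $A_\omega$ and chooses $z_n\in\omega\times\{\beta_n\}$ diagonally so that $g_{\beta_i\beta_n}(z_n)\in[n,\omega]\times\{\beta_i\}$ for all $i\le n$; the verification of (b) then leans on item (3) of Lemma~\ref{thm:lema123}, which propagates continuity of $g_{\beta,\alpha+1}$ downward from levels where its value at $(\omega,\alpha+1)$ is not $\infty$. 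You instead make explicit two things the paper leaves implicit: first, that for $\beta\le\delta(z_n)$ the value $g_\beta(A_n)$ is forced to equal $g_\beta\big(\mathfrak p(z_n)\big)$, so that (b) reduces to the single convergence $\mathfrak p(z_n)\to A_\omega$ in $P^*(T_\alpha)$ together with $\delta(z_n)$ nondecreasing with supremum $\lambda$; and second, that $P^*(T_\alpha)$ is compact metrizable (via Corollary~\ref{thm:limitapeso}), so the truncations $\mathfrak p(a_n)=\rho_{\beta_n}(A_\omega)\to A_\omega$ can be $2^{-n}$-approximated by $\mathfrak p(z_n)$ with $z_n\in Z_\alpha$ on the same fibre. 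The fibrewise approximation is exactly where continuity of the connecting maps of $T_\alpha$ enters, just as in the paper: your metric ball of radius $2^{-n}$ plays the role of the paper's basic neighbourhood $\bigcap_{i\le n}g_{\beta_i\beta_n}^{-1}\big[[n,\omega]\times\{\beta_i\}\big]$, and it also takes care of skipped levels, since $g_{\beta\beta_n}\big((k,\beta_n)\big)$ is eventually $\infty$ when $g_{\beta\beta_n}(\omega,\beta_n)=\infty$ ($\infty$ being isolated because the fibres are compact). What your route buys is uniformity: all four cases collapse into one argument and Lemma~\ref{thm:lema123}(3) is not needed; what the paper's buys is a coordinatewise, metric-free construction. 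The delicate points in your reduction --- that only the tail of $g_\beta(A_n)$ is controlled (harmless, convergence being a tail property), and the borderline $\beta=\lambda$, where a nondecreasing sequence in $\delta[A_\omega]$ with supremum a successor (or $0$) must eventually attain it, so the values are again forced --- are all handled correctly in your write-up.
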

\begin{proof}
Assume first that $A_\omega\subset Z_\alpha$. If $A_\omega$ has a largest element $z$, take $z_n=z$, for all $n\in\omega$; otherwise,
let $(z_n)_{n\in\omega}$ be an increasing cofinal sequence in $A_\omega$. Assume now that $A_\omega$ is not contained in $Z_\alpha$.
If $A_\omega$ has a largest element $t$, then it must be in $T_\alpha\setminus Z_\alpha$, since $Z_\alpha$ is an initial part of $T_\alpha$.
Then $t$ is of the form $(\omega,\beta)$ and we take $z_n=(n,\beta)$, for all $n\in\omega$. Finally, if $A_\omega$ does not have a largest element,
let $\big((\omega,\beta_n)\big)_{n\in\omega}$ be an increasing cofinal sequence in $A_\omega$. For each $n\in\omega$,
since $g_{\beta_i\beta_n}$ is continuous and $g_{\beta_i\beta_n}(\omega,\beta_n)=(\omega,\beta_i)$, for $i\le n$, we can take $z_n\in\omega\times\{\beta_n\}$ with $g_{\beta_i\beta_n}(z_n)\in[n,\omega]\times\{\beta_i\}$, for all $i\le n$. In all four cases considered above, it is easy to check that
$(z_n)_{n\in\omega}$ satisfies properties (a) and (b), keeping in mind item~(3) of Lemma~\ref{thm:lema123}.
\end{proof}

\begin{lem}\label{thm:B}
Let $\alpha\in\omega_1$ and fix an admissible partial order $\le$ in $T_\alpha$. Given $z\in Z_\alpha$ and a finite subset $\mathcal F$ of $\mathbb S\cup\{0\}$,
there exists $B\in P^*(Z_\alpha)$ with $z\in B$ satisfying the following conditions:
\begin{itemize}
\item[(a)] for all $\beta\le\alpha$, if $Z^\diamondsuit_\beta$ is a special antichain in $Z_\beta$, then
$B$ intersects $Z^\diamondsuit_\beta$;
\item[(b)] for all $w\in B$ with $w>z$, we have $\delta(w)\not\in\mathcal F$.
\end{itemize}
\end{lem}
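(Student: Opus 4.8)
The goal of Lemma~\ref{thm:B} is to construct a nonempty path $B$ of $Z_\alpha$ that contains a prescribed element $z$, meets every relevant $\diamondsuit$-guess $Z^\diamondsuit_\beta$, and avoids the forbidden levels $\mathcal F$ above $z$. This is the combinatorial heart of the whole construction: condition~(a) is precisely what will later let Corollary~\ref{thm:temccc} apply (via hypothesis~(i) there, corresponding to admissibility condition~(3)), while condition~(b) is the level-avoidance needed to eventually make maximal antichains special. The plan is to build $B$ by a recursive process that grows an increasing chain in $Z_\alpha$ starting at $z$, at each stage either hitting a special $\diamondsuit$-antichain or climbing past a forbidden level, and then take $B$ to be the downward closure of the resulting chain.

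First I would fix an enumeration of the countable set $\{\beta\le\alpha : Z^\diamondsuit_\beta \text{ is a special antichain in } Z_\beta\}$ of ``active'' indices; since $T_\alpha$ is countable this set is countable. I would then construct an increasing sequence $z=w_0 < w_1 < w_2 < \cdots$ in $Z_\alpha$ by recursion. The key tool at each successor step is the \emph{specialness} of the antichains involved. Concretely, suppose I have built $w_k \in Z_\alpha$ with $\mathfrak p(w_k)\cap Z^\diamondsuit_{\beta}=\emptyset$ for the next active index $\beta$ I wish to hit; because $Z^\diamondsuit_\beta$ is special in $Z_\beta$ and $Z_\beta$ is an initial part of $Z_\alpha$, applying the definition of special antichain (with the finite set $\mathcal F$ as the forbidden levels) yields an element $s>w_k$ in $Z^\diamondsuit_\beta$ with $\delta\big[\left]w_k,s\right]\big]\cap\mathcal F=\emptyset$. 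I set $w_{k+1}=s$, which simultaneously achieves a hit on $Z^\diamondsuit_\beta$ (giving~(a) for that $\beta$) and respects the level restriction (contributing to~(b)). If the active index $\beta$ has already been met along $\mathfrak p(w_k)$, I simply skip it; and at stages where no active index needs attention I may take $w_{k+1}=w_k$ or a trivial one-step extension. A standard bookkeeping argument ensures that every active $\beta$ is eventually handled.

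The path $B$ is then $\mathfrak p(w)$ if the chain stabilizes at some $w$, or the downward closure $\bigcup_k \mathfrak p(w_k)$ (a path of $Z_\alpha$, being a union of an increasing chain of paths) otherwise; in either case $z\in B$ and $B\in P^*(Z_\alpha)$. Condition~(a) holds by construction since every active $\beta$ got hit. For condition~(b), every $w\in B$ with $w>z$ lies in some $\left]w_k,w_{k+1}\right]$, and each such interval was chosen with $\delta\big[\left]w_k,w_{k+1}\right]\big]\cap\mathcal F=\emptyset$, so $\delta(w)\notin\mathcal F$; here I use that $\mathcal F$ is a \emph{fixed finite} set throughout the recursion, so the same avoidance constraint can be imposed at every step.

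The main obstacle I anticipate is organizing the recursion so that hitting the $\diamondsuit$-antichains and respecting the level avoidance do not conflict, and making sure the construction can always proceed. The crucial point is that specialness of $Z^\diamondsuit_\beta$ is exactly designed to allow extension \emph{while avoiding any prescribed finite set of levels}, so the level constraint $\mathcal F$ never blocks the next step — this is precisely why the definition of special antichain quantifies over arbitrary finite $\mathcal F$. A subtler issue is the requirement $\mathfrak p(w_k)\cap Z^\diamondsuit_\beta=\emptyset$ needed before applying specialness: if $w_k$ already lies above an element of $Z^\diamondsuit_\beta$, that $\beta$ is automatically satisfied and should be dropped from the to-do list, so I must track which active indices remain genuinely ``below'' the current chain. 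I also need to confirm that $B$ is a path of $Z_\alpha$ and not merely of $T_\alpha$, which follows because each $w_k\in Z_\alpha$ and $Z_\alpha$ is an initial part of $T_\alpha$ (admissibility condition~(2)), so $\mathfrak p(w_k)\subset Z_\alpha$. These are the delicate bookkeeping details; the underlying mechanism is a direct iteration of the special-antichain property.
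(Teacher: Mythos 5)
Your overall architecture matches the paper's proof: enumerate the indices $\beta\le\alpha$ for which $Z^\diamondsuit_\beta$ is a special antichain in $Z_\beta$ and is not already met by $\mathfrak p(z)$, recursively extend a chain starting from $z$ by one application of specialness per index (always with the same fixed finite set $\mathcal F$), and take $B$ to be the union of the downward closures of the chain. However, there is a genuine gap at the central extension step. You justify producing $s\in Z^\diamondsuit_\beta$ with $s>w_k$ by invoking specialness of $Z^\diamondsuit_\beta$ \emph{in $Z_\beta$} together with the remark that $Z_\beta$ is an initial part of $Z_\alpha$. That is not enough: the definition of special antichain only quantifies over points $t$ of the tree in which the antichain is special, so it says nothing when $w_k\in Z_\alpha\setminus Z_\beta$. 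Worse, for such $w_k$ the desired $s$ cannot exist at all, since every element of $Z^\diamondsuit_\beta\subset Z_\beta$ has level at most $\beta<\delta(w_k)$, hence nothing in $Z^\diamondsuit_\beta$ lies above $w_k$. So your recursion gets stuck as soon as the chain climbs past level $\beta$ (or $z$ itself starts above level $\beta$) while $\mathfrak p(w_k)\cap Z^\diamondsuit_\beta=\emptyset$; your ``drop satisfied indices'' bookkeeping does not cover this case, because here $\beta$ is precisely \emph{not} satisfied.

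The missing ingredient --- and the one hypothesis of the lemma you never use --- is condition~(3) of admissibility: if $Z^\diamondsuit_\beta$ is special in $Z_\beta$, then $\mathfrak p(w)$ meets $Z^\diamondsuit_\beta$ for \emph{every} $w\in Z_\alpha\setminus Z_\beta$. This is exactly what makes the bad case vacuous, and it is the first sentence of the paper's proof, phrased there as: specialness of $Z^\diamondsuit_\beta$ in $Z_\beta$ implies specialness in $Z_\alpha$ (points of $Z_\beta$ are handled by specialness in $Z_\beta$, since $Z_\beta$ being an initial part makes the intervals $\left]t,s\right]$ agree; points outside $Z_\beta$ are handled by condition~(3)). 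Note that condition~(3) is not a convenience: without it the statement is false --- if $\delta(z)>\beta$ and $\mathfrak p(z)\cap Z^\diamondsuit_\beta=\emptyset$, then any path $B$ containing $z$ satisfies $B\cap Z_\beta\subset\mathfrak p(z)$, so condition~(a) fails for this $\beta$ no matter how $B$ is built. With that observation inserted at your extension step (apply specialness in $Z_\alpha$ rather than in $Z_\beta$), the rest of your argument --- the fixed-$\mathcal F$ level avoidance, the skip rule, and taking $B=\bigcup_{k\in\omega}\mathfrak p(w_k)\in P^*(Z_\alpha)$ --- goes through and essentially coincides with the paper's proof.
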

\begin{proof}
Note first that if $\beta\le\alpha$ and $Z^\diamondsuit_\beta$ is a special antichain in $Z_\beta$, then $Z^\diamondsuit_\beta$ is also a special
antichain in $Z_\alpha$, by property~(3) in Definition~\ref{thm:defadmissible}. Set:
\[\Lambda=\big\{\beta\in[0,\alpha]:\text{$Z^\diamondsuit_\beta$ is a special antichain in $Z_\beta$ and $\mathfrak p(z)\cap Z^\diamondsuit_\beta=\emptyset$}\big\}.\]
If $\Lambda=\emptyset$, take $B=\mathfrak p(z)$. Otherwise, let $\{\beta_n:n\in\omega\}$ be an enumeration of $\Lambda$ and let us define by recursion
a chain $\{w_n:n\in\omega\}$ such that $w_n\in Z^\diamondsuit_{\beta_n}$, $w_n>z$, and $\delta\big[\left]z,w_n\right]\big]\cap\mathcal F=\emptyset$, for all $n\in\omega$. Given $w_i$, $i\le n$, we obtain $w_{n+1}$ as follows. Setting $w=\max\{w_0,\ldots,w_n\}$, take $w_{n+1}\in\mathfrak p(w)\cap Z^\diamondsuit_{\beta_{n+1}}$, if this intersection is not empty; otherwise, take $w_{n+1}\in Z^\diamondsuit_{\beta_{n+1}}$ with $w_{n+1}>w$ and
$\delta\big[\left]w,w_{n+1}\right]\big]\cap\mathcal F=\emptyset$. To conclude the proof, set $B=\bigcup_{n\in\omega}\mathfrak p(w_n)$.
\end{proof}

\begin{proof}[Proof of Lemma~\ref{thm:estendeadmissible}]
Set $A_\omega=A$ and take $(z_n)_{n\in\omega}$ in $Z_\alpha$ as in Lemma~\ref{thm:zn}. By recursion, we define $A_n\in P^*(Z_\alpha)$ and
injective maps $\phi_n:A_n\to\omega$ as follows: given $A_i$ and $\phi_i$, for $i<n$, obtain $B\in P^*(Z_\alpha)$ from Lemma~\ref{thm:B} with $z=z_n$ and
\begin{equation}\label{eq:F}
\mathcal F=\bigcup_{i<n}\delta\Big[\phi_i^{-1}\big[[0,n]\big]\Big];
\end{equation}
set $A_n=B$. Let ${\le}'$ be the partial order in $T_{\alpha+1}$ induced by the sequence $(A_n)_{n\in[0,\omega]}$. That
${\le}'$ satisfies property~(3) of Definition~\ref{thm:defadmissible} follows from the fact that each $A_n$ satisfies condition~(a) in Lemma~\ref{thm:B}.
To conclude the proof, fix $\beta\in\mathbb S\cup\{0\}$ with $\sup\delta[A_\omega]<\beta\le\alpha$ and let us check that $g_{\beta,\alpha+1}$ is continuous at the point $(\omega,\alpha+1)$. To this aim, it is sufficient to verify that $\beta\in\delta[A_n]$ for at most a finite number of $n\in\omega$.
If $\beta=\delta(v)$, for some $v\in A_i$ with $i\in\omega$, we claim that $\beta\not\in\delta[A_n]$, for all $n>\max\big\{i,\phi_i(v)\big\}$.
Namely, for such $n$, we have that $\beta$ is in \eqref{eq:F} and then, by the construction of $A_n$, we get that
$\delta(w)\ne\beta$, for all $w\in A_n$ with $w>z_n$. Finally,
for $w\le z_n$, we have $\delta(w)\le\delta(z_n)\le\sup\delta[A_\omega]<\beta$, proving the claim.
\end{proof}

\end{section}

\end{document}